\begin{document}

\setcounter{page}{69}

\title{Fluctuations in first-passage percolation}


\author{Philippe Sosoe}
\address{Center for Mathematical Sciences and Applications, Harvard University, 20 Garden Street, Cambridge, MA 01451, USA\afterpage{\blankpage}}
\curraddr{Cornell University, 584 Malott Hall, Ithaca, NY 14853, USA}
\email{psosoe@math.cornell.edu}


\subjclass[2010]{60K35, 60K37, 82B43}

\date{\today}

\begin{abstract}
We present a survey of techniques to obtain upper bounds for the variance of the passage time in first-passage percolation. The methods discussed are a combination of tools from the theory of concentration of measure, some of which we briefly review. These are combined with variations on an idea of Benjamini-Kalai-Schramm to obtain a logarithmic improvement over the linear bound implied by the Efron-Stein/Poincare inequality, for general edge-weight distributions.
\end{abstract}

\maketitle


\tableofcontents

\section{Introduction}
We look at first-passage percolation  in $\mathbb{Z}^d$, with i.i.d. weights 
\[t_e, \quad e\in \mathcal{E}(\mathbb{Z}^d).\]
Here $\mathcal{E}(\mathbb{Z}^d)$ is the set of edges between points of $\mathbb{Z}^d$ that differ by 1 in one coordinate. The common distribution of the weights is denoted by $\mu$, a probability measure on $[0,\infty)$. We will assume throughout that 
\begin{equation}
\label{eqn: zeroprob}
\mu(\{0\}) < p_c(d),
\end{equation}
where $p_c(d)$ is the critical probability for independent Bernoulli percolation on the edges.

Let $\mathbf{e}_1=(1,\ldots,0)$  be the first standard basis vector.
The quantity we are interested in is the passage time:
\begin{equation}\label{eqn: passagetime}
T_n :=T(0,n\mathbf{e}_1)=\inf_{\gamma: 0\rightarrow n\mathbf{e}_1} \sum_{e\in \gamma}t_e,
\end{equation}
where the infimum is taken over the collection of lattice paths 
\[\gamma=(0=x_0,\ldots, n\mathbf{e}_1=x_N), \quad \|x_i-x_{i+1}\|_1=1.\]

\subsection{First order}
\index{subadditive ergodic theorem}%
 Under our assumptions, the leading order behavior of $T_n$ is linear. This follows from the subadditive ergodic theorem, which is presented in \cite{ch:Damron}. Here we just record the result for future reference.
\begin{theorem}\label{thm: 1storder}
Let the edge weights be i.i.d. with distribution $\mu$ satisfying \eqref{eqn: zeroprob}, as well as
\[\mathbb{E}\min(Y_1,\ldots,Y_{2d}) <\infty,\]
where $Y_i$, $i=1,\ldots, 2d$ are independent and have distribution $\mu$. Then there exists a constant $\nu>0$ such that
\begin{equation}\label{eqn: subadditive-thm}
\lim_{n\rightarrow \infty} \frac{1}{n}T(0,n\mathbf{e}_1)=\nu,
\end{equation}
almost surely and in $L^1$.
\end{theorem}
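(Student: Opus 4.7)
My plan is to deduce the theorem from Kingman's subadditive ergodic theorem, applied to the doubly-indexed family $X_{m,n} := T(m\mathbf{e}_1, n\mathbf{e}_1)$ for $0 \leq m < n$. The bulk of the work is to verify the hypotheses of Kingman's theorem and then to show separately that the limiting constant $\nu$ is strictly positive.

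First I would check the three formal hypotheses. Subadditivity $X_{0,n+m} \leq X_{0,n} + X_{n,n+m}$ is immediate from the definition \eqref{eqn: passagetime}: the concatenation of an optimizing path from $0$ to $n\mathbf{e}_1$ with one from $n\mathbf{e}_1$ to $(n+m)\mathbf{e}_1$ is admissible between $0$ and $(n+m)\mathbf{e}_1$. Stationarity of the sequence $\{X_{m,m+k}\}_{m\geq 0}$ for each fixed $k$ follows because the i.i.d.\ family $(t_e)$ is invariant in distribution under translation by $\mathbf{e}_1$, and $X_{m,m+k}$ depends only on edges translated accordingly; ergodicity of the shift by $\mathbf{e}_1$ on the edge configuration is standard. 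The main point is integrability: we need $\mathbb{E}X_{0,n} < \infty$ and $\mathbb{E}X_{0,n}/n$ bounded above. Here the moment hypothesis $\mathbb{E}\min(Y_1,\ldots,Y_{2d}) < \infty$ is essential, since $\mathbb{E}t_e$ itself may be infinite.

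The hard part of integrability is to avoid the naive axis bound $T_n \leq \sum_{i=1}^n t_{\{(i-1)\mathbf{e}_1, i\mathbf{e}_1\}}$, which only gives $\mathbb{E}T_n \leq n\mathbb{E}t_e$ and is useless without a first-moment assumption on $\mu$. Instead I would bound $T_n$ by a path constructed greedily at each vertex: from the $2d$ edges incident to vertex $(i-1)\mathbf{e}_1$, one selects the one of smallest weight and exits along it, then takes one or two additional edges to return to the correct axis vertex and continue. A more careful surgery (as in the standard proof of integrability of first-passage percolation) stitches together such "cheapest local excursions" using at most a bounded number of edges per axis step, yielding
\[
\mathbb{E}T_n \leq C\, n\, \mathbb{E}\min(Y_1,\ldots,Y_{2d})
\]
for some constant $C=C(d)$. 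With this bound in hand, Kingman's theorem produces an almost-sure and $L^1$ limit $\nu := \lim_n n^{-1}\mathbb{E}T_n \in [0,\infty)$.

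The final step is to show $\nu > 0$; this is where the assumption $\mu(\{0\}) < p_c(d)$ enters. I would argue by contraposition using a comparison with Bernoulli bond percolation on $\mathbb{Z}^d$ where an edge is open iff $t_e = 0$. Under \eqref{eqn: zeroprob}, the open cluster of the origin is almost surely finite, and in fact (by the exponential decay of the subcritical regime, or more elementarily by a direct block argument) there is a constant $c>0$ such that any path of length $N$ starting at the origin contains at least $cN$ edges of strictly positive weight with overwhelming probability. Picking a value $\delta>0$ with $\mu([0,\delta]) < p_c(d)$ and applying the same comparison to $\{t_e \leq \delta\}$ instead of $\{t_e=0\}$, one concludes that any path achieving $T_n$ accumulates weight at least $c\delta n$ with high probability, giving $\mathbb{E}T_n \geq c'n$ for large $n$ and hence $\nu > 0$. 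I expect the positivity step to be the main conceptual obstacle, since it relies on a nontrivial comparison with subcritical Bernoulli percolation and the combinatorial counting of lattice paths; the integrability argument, while requiring the right surgery, is standard once the min-moment hypothesis is used correctly.
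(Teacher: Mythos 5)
The paper does not actually prove Theorem \ref{thm: 1storder}; it records the statement and defers the proof to the companion chapter on the subadditive ergodic theorem, so your proposal has to be judged against the standard argument. Your overall architecture is the right one (Kingman's theorem plus a comparison with subcritical Bernoulli percolation for positivity), and the positivity step is essentially correct: it is exactly the content of Kesten's path estimate, which appears in this paper as Lemma \ref{lem: perc-lem}, applied with the event $\{t_e\le\delta\}$ for a $\delta$ small enough that $\mu([0,\delta])<p_c(d)$ (such a $\delta$ exists by right-continuity of the distribution function together with \eqref{eqn: zeroprob}).

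The genuine gap is in your integrability argument. The greedy construction you describe --- exit each axis vertex along its cheapest incident edge, then ``take one or two additional edges to return to the correct axis vertex'' --- does not give a bound of the form $C\,n\,\mathbb{E}\min(Y_1,\dots,Y_{2d})$, because the connector edges you are forced to traverse are generic edges whose weights are not local minima; their expected cost is $\mathbb{E}t_e$, which may be infinite under the stated hypotheses. No surgery of this type (a sum of pointwise minima plus uncontrolled connectors) can work. The correct argument, due to Cox--Durrett and Kesten, is structurally different: one fixes $2d$ \emph{edge-disjoint} deterministic paths $\pi_1,\dots,\pi_{2d}$ from $0$ to $\mathbf{e}_1$, each of length at most some constant $K=K(d)$, so that $T(0,\mathbf{e}_1)\le\min_i T(\pi_i)$ with the $T(\pi_i)$ \emph{independent}. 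Then
\[
\mathbf{P}\Bigl(\min_i T(\pi_i)\ge\lambda\Bigr)=\prod_{i=1}^{2d}\mathbf{P}(T(\pi_i)\ge\lambda)\le K^{2d}\,\mathbf{P}(t_e\ge\lambda/K)^{2d},
\]
and integrating in $\lambda$ and substituting $u=\lambda/K$ gives $\mathbb{E}T(0,\mathbf{e}_1)\le K^{2d+1}\int_0^\infty \mathbf{P}(t_e\ge u)^{2d}\,\mathrm{d}u=K^{2d+1}\,\mathbb{E}\min(Y_1,\dots,Y_{2d})<\infty$. (This is the same device used in the proof of Lemma \ref{lem: max_lemma} in the paper.) Subadditivity and stationarity then give $\mathbb{E}T_n\le n\,\mathbb{E}T(0,\mathbf{e}_1)$, which is all Kingman's theorem needs, since the process is nonnegative. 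You should replace the greedy excursion step with this disjoint-paths bound; the rest of your outline then goes through.
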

 
\subsection{Fluctuations}\index{fluctuation}%
 The objective in this article will be to investigate the fluctuations of $T_n$ around its mean $\mathbf{E}T_n$. The simplest measure of the magnitude of these fluctuations is the variance
\[\mathbf{Var}(T_n)= \mathbf{E}(T_n-\mathbf{E}T_n)^2.\]
It is expected that
\begin{equation}\label{eqn: approx}
\mathbf{Var}(T_n) \approx n^{2\chi}
\end{equation}
for some dimension dependent exponent $\chi=\chi(d)$. We leave the exact nature of the approximation \eqref{eqn: approx} unspecified. In some exactly solvable models, the correct value of $\chi$ is known and we have asymptotics of the form
\begin{equation}
\frac{\mathbf{Var}(T_n)}{n^{2\chi}}\rightarrow \sigma,
\end{equation}
for some $\sigma>0$ (see the section on predictions below). We will be interested in upper and lower bounds on the variance.

\subsection{Predictions} 
\index{Kardar-Parisi-Zhang (KPZ)!exponents}%
\index{KPZ exponents}%
The validity of the approximation \eqref{eqn: approx} is widely assumed in the physics literature. Moreover, the exponents $\chi(d)$ are believed to be universal among a large class of growth models, some of which are more complicated than first-passge percolation. The models considered in the physics literature include ballistic aggregation, domain walls in two-dimension random-bond Ising models, and directed polymers in random potentials \cite{HH,KZ}. 

Physicists generally agree that $\chi(d)<1$ and that $\chi$ decreases with $d$, but there are differring predictions for the value of $\chi$ \cite{MW, KZ, KK}. A possibility which is investigated in several works on random growth models is the existence of an upper critical dimension $d_0$, such that $\chi(d)=0$ for $d\ge d_0$. See the discussion in \cite{MW}.

The most precise predictions are available in dimension $d=2$, where it is expected \cite{HH, HHF, KPZ, KZ} that 
\[\chi=\frac{1}{3}.\]
This prediction has been verified rigorously in a number of exactly solvable growth models. We cite one of the early results below. Articles \cite{ch:Seppalainen} and \cite{ch:Corwin} will explore solvable models in greater depth. In any case, these predictions are far from being proved in the first-passage percolation model that concerns us here.

Another important prediction from physics, the universal scaling relation, links the order of the fluctuations $\chi$ to another exponent $\xi$, the transversal fluctuation exponent. Discussing this here would take us too far afield, so we refer to  
\cite{ch:Damron} for a statement of the scaling relation and discussion of its derivation under suitable assumptions.

\subsection{Known results}
The best lower bound for $\chi$ in general dimension is due to H. Kesten in \cite{kesten93}, and is
\begin{equation}\label{eqn: chi-lower}
\mathbf{Var}(T_n) \ge C,
\end{equation}
which translates to
\[\chi(d)\ge 0.\]
Note that if the model has an upper critical dimension in the sense discussed above, or simply $\chi(d)\rightarrow 0$, this is the best that can be hoped for without any assumptions on $d$, although it is likely a poor bound in low dimensions. C. Newman and M. Piza obtain the following improvement on  \eqref{eqn: chi-lower} in dimensions $2$:
\begin{theorem}[Newman, Piza \cite{newmanpiza}]
\index{fluctuation!lower bound}%
Let $d=2$. Suppose $\mathbf{E} t_e^2<\infty$, $\mathbf{Var}(t_e)>0$ and that \eqref{eqn: zeroprob} holds. Then we have the lower bound
\[\mathbf{Var}(T_n)\ge C\log n\]
for some $C>0$.
\end{theorem}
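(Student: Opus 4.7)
The plan is to boost Kesten's dimension-free bound $\mathbf{Var}(T_n) \ge C$ (cited above) by extracting $\Theta(\log n)$ orthogonal contributions via a multiscale martingale decomposition, each contribution of constant size. The planarity of $d = 2$ will play the essential role in preventing collapse of the individual contributions.

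Concretely, set $n_k = 2^k$ for $k = 0, 1, \ldots, K := \lfloor \log_2 n \rfloor$, and let $\mathcal{G}_k$ denote the $\sigma$-algebra generated by the edge weights in the half-plane $\{x \in \mathbb{Z}^2 : x_1 \le n_k\}$. The Doob decomposition gives
\[
T_n - \mathbf{E} T_n = \sum_{k=0}^{K} N_k, \qquad N_k := \mathbf{E}[T_n \mid \mathcal{G}_k] - \mathbf{E}[T_n \mid \mathcal{G}_{k-1}],
\]
and orthogonality of martingale differences yields $\mathbf{Var}(T_n) = \sum_{k} \mathbf{E}[N_k^2]$. The target estimate is the uniform-in-$k$ lower bound $\mathbf{E}[N_k^2] \ge c > 0$; summing over the $K \sim \log n$ scales then gives the conclusion. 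To obtain the per-scale bound, one interprets $N_k$ as the conditional fluctuation contributed by the slab $\{n_{k-1} < x_1 \le n_k\}$ after integrating out the edges to its right. In $d = 2$, every lattice path from $0$ to $n\mathbf{e}_1$ must traverse this slab, and one should combine this topological fact with Kesten's constant variance bound applied to a suitably chosen sub-rectangle at scale $n_k$, together with an averaging argument across the conditioning on $\mathcal{G}_{k-1}$, to extract a constant share of the slab-crossing variance into $N_k$.

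The main obstacle is precisely this per-scale step. The conditional expectation $\mathbf{E}[T_n \mid \mathcal{G}_k]$ smooths over every configuration compatible with the revealed edges, so a priori one cannot exclude cancellations that drive $\mathbf{E}[N_k^2]$ to zero for some intermediate $k$. Ruling this out appears to require a quantitative input confining the geodesic near the axis (for instance, a transversal fluctuation bound or a curvature estimate on the limit shape that forbids detours around a fixed slab), combined with a disjoint-paths or BK-type argument that exploits planarity to show two independent weight configurations on the slab produce passage times that differ by a definite amount with positive probability. Higher-dimensional analogues of these geometric inputs fail, which is consistent with the restriction $d = 2$ in the statement.
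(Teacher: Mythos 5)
The paper does not actually prove this theorem; it only cites \cite{newmanpiza}, so your proposal has to be measured against the Newman--Piza argument itself. Your skeleton --- a martingale decomposition into $\sim\log_2 n$ dyadic blocks, a constant lower bound per block, orthogonality to sum them, planarity as the reason the per-block bound survives --- is indeed the structure of their proof. But the proposal has a genuine gap exactly where you flag one: the per-scale bound $\mathbf{E}[N_k^2]\ge c$ is asserted rather than proved, and the tools you reach for (Kesten's constant-variance bound on a sub-rectangle, transversal fluctuation or curvature estimates, a BK-type disjoint-paths argument) are not what closes it --- the first does not transfer through the conditional expectation, and the latter two are unavailable at this level of generality.

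The actual mechanism is more elementary. Refine the filtration edge by edge inside the $k$-th block $A_k$, so that $\mathbf{E}[N_k^2]=\sum_{e\in A_k}\mathbf{E}[\delta_e^2]$ with $\delta_e$ the single-edge martingale increments. Testing $\delta_e$ against a mean-zero monotone function $g(t_e)$ of the one coordinate being revealed gives $\mathbf{E}[\delta_e\,g(t_e)]=\mathrm{Cov}(T_n,g(t_e))$, and monotonicity of $T_n$ in $t_e$ (Lemma \ref{linear}) together with $\mathbf{Var}(t_e)>0$ yields $\mathbf{E}[\delta_e^2]\ge c\,\mathbf{P}(e\in G_n)^2$; this is how the ``smoothing by conditioning'' you worry about is ruled out --- no geometric confinement of the geodesic is needed. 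Cauchy--Schwarz over the block then gives $\mathbf{E}[N_k^2]\ge c\,|A_k|^{-1}\bigl(\mathbf{E}\,\#(G_n\cap A_k)\bigr)^2$, and the topological fact that every path from $0$ to $n\mathbf{e}_1$ must cross the block forces $\mathbf{E}\,\#(G_n\cap A_k)\ge c\,2^{k}$. For the ratio to be bounded below one needs $|A_k|\asymp 2^{2k}$, which is why Newman--Piza take $A_k$ to be the dyadic \emph{annulus} $\{2^{k-1}<\|x\|_\infty\le 2^k\}$ rather than your half-plane slabs: a slab of width $2^{k-1}$ contains order $2^{k}n$ relevant edges (absent any a priori transversal confinement of the geodesic), and the same Cauchy--Schwarz then yields only order $2^{k}/n$ per scale, summing to a constant rather than to $\log n$. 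So to complete the argument you need both the single-edge covariance lemma and the replacement of slabs by annuli centered at the origin; planarity enters precisely through the count $|A_k|\asymp 2^{2k}$ against a crossing length $2^k$, and this ratio degenerates for $d\ge 3$ exactly as you anticipate.
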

Note that the previous result still does not imply that $\chi(2)>0$.
\index{Benjamini-Kalai-Schramm theorem}%
As for upper bounds, the best known result is the following, obtained in \cite{DHS13} by M. Damron, J. Hanson and myself, following work of I. Benjamin, G. Kalai, and O. Schramm \cite{BKS}, as well as M. Benaim and R. Rossignol \cite{BR}:
\begin{theorem}[Damron, Hanson, Sosoe \cite{DHS13}]\label{thm: DHS13}
Suppose 
\begin{equation}
\label{eqn: logte}
\mathbf{E}\Bigl[t_e^2 \log_2^+ t_e\Bigr] <\infty
\end{equation}
 and \eqref{eqn: zeroprob} holds. Then there is a constant $C>0$ such that
\begin{equation}\label{eqn: DHS13}
\mathbf{Var}(T_n) \le C\frac{n}{\log n}.
\end{equation}
\end{theorem}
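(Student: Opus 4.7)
I would follow the Benjamini--Kalai--Schramm (BKS) strategy, as refined by Benaim and Rossignol in \cite{BR}: upgrade the naive Efron--Stein bound $\mathbf{Var}(T_n)=O(n)$ by combining a modified log-Sobolev (entropy) inequality for product measures with a reduction of individual edge influences via averaging. Concretely, replace $T_n$ by an averaged passage time
$$F_n = \frac{1}{|B_m|} \sum_{v \in B_m} T(0,\, n\mathbf{e}_1 + v),$$
where $B_m$ is a box of side length $m$ around $n\mathbf{e}_1$, with $m$ to be tuned at the end. A subadditivity and moment estimate shows that $\mathbf{Var}(T_n - F_n)$ is of lower order than the target $n/\log n$. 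The averaging is crucial: no single edge $e$ can lie on more than a small fraction of the $|B_m|$ relevant geodesics, so each martingale increment $\Delta_e F_n$ (obtained by resampling $t_e$) has $\mathbf{E}|\Delta_e F_n|$ smaller than $\mathbf{E}|\Delta_e T_n|$ by a factor of order $1/m$.

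Next, apply the Falik--Samorodnitsky inequality (or, equivalently, a tensorized entropy inequality for product measures) to $F_n$ to obtain
$$\mathbf{Var}(F_n) \, \log\!\left( \frac{\mathbf{Var}(F_n)}{\sum_e (\mathbf{E}|\Delta_e F_n|)^2} \right) \le C \sum_e \mathrm{Ent}\bigl( (\Delta_e F_n)^2 \bigr).$$
The denominator inside the log is small thanks to averaging, so the logarithm is of order $\log m$. On the right-hand side, a Kesten-style argument bounds $|\Delta_e F_n|$ pointwise by the weight of a short correction path near $e$, and under the moment condition \eqref{eqn: logte} one shows that $\sum_e \mathrm{Ent}((\Delta_e F_n)^2) \le Cn$. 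Combining these gives $\mathbf{Var}(F_n) \le C n / \log m$; choosing $m=n^{\alpha}$ for a fixed small $\alpha > 0$ then yields the desired $O(n/\log n)$ bound, with the error $\mathbf{Var}(T_n-F_n)$ remaining negligible at this scale.

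\textbf{Main obstacle.} The heart of the theorem is the weakening of the moment condition from the $(2+\varepsilon)$-moment used by Benaim--Rossignol, which provides room for an $L^p$--$L^2$ hypercontractivity estimate, to the sharper $L^2\log L$ condition \eqref{eqn: logte}. The challenge is to bound $\mathrm{Ent}((\Delta_e F_n)^2)$ directly: this requires a truncation of large edge weights at a level depending on $e$, a combinatorial localization of the effect of $e$ to a neighborhood whose size is controlled via Kesten's greedy-lattice-animal estimates, and a careful accounting of the truncated tails to show that their contribution still sums to $O(n)$ under \eqref{eqn: logte}. Without a $(2+\varepsilon)$-moment, one loses the hypercontractive slack that made the entropy bound in \cite{BR} essentially automatic, and each of these steps must be done by hand.
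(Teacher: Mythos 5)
Your outline of the variance-reduction machinery matches the paper's proof: average the passage time over $\#B_m$ translates (the paper uses $F_n=\frac{1}{\#B_m}\sum_{z\in B_m}T(z,z+n\mathbf{e}_1)$ with $m=\lceil n^{1/4}\rceil$, and Lemma \ref{lem: approx} shows $|\mathbf{Var}(T_n)-\mathbf{Var}(F_n)|\le Cn^{3/4}$), apply the Falik--Samorodnitsky inequality \eqref{eqn: FS}, and use the one-dimensionality of geodesics (Lemma \ref{lem: geo1d}) to show $\mathbf{E}|\Delta_i F_n|\le Cn^{(1-d)/4}$, so that the logarithmic factor on the left of \eqref{eqn: FS} is of order $\log n$ once one may assume $\mathbf{Var}(F_n)\ge n^{7/8}$. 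All of this is correct and is exactly the paper's route.

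The gap is the step you yourself flag as the heart of the theorem: the bound $\sum_i Ent\bigl((\Delta_i F_n)^2\bigr)\le Cn$ under the $L^2\log L$ condition \eqref{eqn: logte}. You do not prove it, and the mechanism you sketch --- truncating \emph{large} edge weights at an edge-dependent level to recover the hypercontractive slack lost relative to Benaim--Rossignol --- is not the one that works and is not what the paper does. The paper's argument encodes each weight as $t_e=F^{-1}(U_e)$ with $U_e=\sum_j \omega_{e,j}2^{-j}$ a dyadic sum of Bernoulli bits, applies the tensorized discrete log-Sobolev inequality \eqref{eqn: logsobolev} bit by bit, and then uses Rossignol's averaging lemma (Lemma \ref{lem: uniformvar}) together with the refined linearization $T_z(t)-T_z(s)=\min\{t-s,(D_{z,e}-s)_+\}$ of Lemma \ref{lem: linearb} to resum over $j$. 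The outcome is the key estimate $\sum_i Ent_i|\Delta_i F_n|^2\le C\,\mathbf{E}\sum_{e\in G_n}(1-\log F(t_e))$: the dangerous contribution comes not from large weights but from edges where $F(t_e)$ is \emph{small}, each geodesic edge acquiring a weight $w_e=1-\log F(t_e)$ with exponential tails. The hypothesis \eqref{eqn: logte} enters only to guarantee $Ent_\mu(t_e^2)<\infty$ and $\mathbf{E}\bigl[F^{-1}(U)^2(1+\log_2^+\frac{1}{1-U})\bigr]<\infty$ via the variational characterization \eqref{eqn: entropyvar}; the final $O(n)$ bound is then a greedy-lattice-animal argument combined with $\mathbf{E}[\#G_n^2]\le Cn^2$. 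Without this chain (Bernoulli encoding, Rossignol's lemma, the $1-\log F(t_e)$ weight, lattice animals), the entropy bound --- and hence the theorem for general $\mu$ --- is not established.
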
 
Most of the remainder of this article will be devoted to giving a sketch of the proof of Theorem \ref{thm: DHS13}. Under stronger assumptions on the moments, this bound on the variance can be supplemented with concentration results:
\begin{theorem}[Damron, Hanson, Sosoe \cite{DHS14}]\label{thm: concentration}
Under the same assumptions as Theorem \ref{thm: DHS13}, there exist constants $C,c>0$, such that for every $\lambda \ge 0$:
\begin{equation}
\mathbf{P}(T_n-\mathbf{E}T_n \le -\lambda\sqrt{n/\log n})\le Ce^{-c\lambda}.
\end{equation}
If $\mathbf{E}e^{\alpha t_e}<\infty$, then
\begin{equation}
\mathbf{P}(|T_n-\mathbf{E}T_n|\ge \lambda \sqrt{n/\log n}) \le Ce^{-c\lambda}.
\end{equation}
\end{theorem}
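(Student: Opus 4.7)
The plan is to upgrade the variance bound of Theorem~\ref{thm: DHS13} to concentration by replacing the Efron-Stein/Poincar\'e inequality with an exponential analogue, while retaining the Benjamini-Kalai-Schramm averaging device. I would work with the averaged passage time
\[F = \frac{1}{|B_m|^2}\sum_{u,v\in B_m} T(u, n\mathbf{e}_1 + v),\]
where $B_m$ is a box of side length a small power of $\log n$, and note that $|\mathbf{E}F - \mathbf{E}T_n|$ is negligible at the scale $\sqrt{n/\log n}$, so it suffices to prove the concentration of $F$ around $\mathbf{E}F$.

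For the lower tail, I would apply a one-sided modified log-Sobolev (or Bobkov--Ledoux) inequality for product measures to $e^{-\lambda F}$, which bounds
\[\operatorname{Ent}(e^{-\lambda F}) \le \sum_e \mathbf{E}\bigl[e^{-\lambda F}\,\psi(\lambda (F - F^{(e)})_+)\bigr],\]
where $F^{(e)}$ is the version of $F$ obtained by resampling the weight $t_e$ and $\psi(x)=e^x-x-1$. Because $T(u,v)$ is $1$-Lipschitz in each $t_e$ and can only decrease when an edge on the geodesic is lowered, each one-sided influence $(F - F^{(e)})_+$ is controlled by $t_e\,\mathbf{1}_{\{e\in\gamma\}}$ for some averaged geodesic $\gamma$. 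The Bena\"im-Rossignol entropy argument used in Theorem~\ref{thm: DHS13}, combined with the moment condition \eqref{eqn: logte}, then yields $\sum_e \mathbf{E}(F-F^{(e)})_+^2 \lesssim n/\log n$. Herbst's integration of the resulting differential inequality for $\log\mathbf{E}e^{-\lambda F}$, together with a Bernstein-type correction term reflecting the possibly large values of $t_e$, produces the sub-exponential estimate $\mathbf{P}(F - \mathbf{E}F \le -t) \le Ce^{-ct/\sqrt{n/\log n}}$, which is the required lower-tail bound via Markov.

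For the two-sided bound under $\mathbf{E}e^{\alpha t_e}<\infty$, I would apply the companion one-sided inequality to $e^{+\lambda F}$. The relevant differences $(F^{(e)}-F)_+$ are bounded by the resampled weight $t'_e$ along a geodesic of the resampled configuration, which is \emph{a priori} unbounded. The exponential moment assumption allows truncating each $t_e$ at level $K\sim\log n$, discarding an event whose probability is negligible when summed over the $O(n)$ edges that can meet a geodesic. The principal obstacle is the delicate interaction of this truncation with the BKS averaging and the entropy inequality: after truncation one still must establish $\sum_e\mathbf{E}(F^{(e)}-F)_+^2 \lesssim n/\log n$ uniformly in $\lambda$ of order $\sqrt{\log n/n}$, despite the fact that a single resampled edge can shift the geodesic by a macroscopic distance. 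Once this influence bound is obtained on the truncated weights, Herbst's argument symmetrically yields the matching upper-tail estimate, and the combination gives the claimed two-sided concentration at the BKS scale $\sqrt{n/\log n}$.
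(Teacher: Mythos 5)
Your high-level plan (exponentiate, control an entropy or variance functional of $e^{\pm\lambda F}$ by the same influence estimates as in the variance proof, then integrate) is the right one, and it is close in spirit to what \cite{DHS14} does. But the pivotal step of your argument is wrong as stated: you claim that the Bena\"im--Rossignol entropy argument yields $\sum_e\mathbf{E}\bigl[(F-F^{(e)})_+^2\bigr]\lesssim n/\log n$. That argument gives no such thing --- it gives $\sum_i \mathrm{Ent}\,(\Delta_iF_n)^2\lesssim n$, with no logarithmic gain, and the sum of squared influences is likewise only known to be $O(n)$. Indeed, if the squared-influence sum really were $O(n/\log n)$, the Efron--Stein inequality \eqref{eqn: ES} alone would already prove Theorem \ref{thm: DHS13} and the whole BKS/hypercontractivity apparatus would be unnecessary. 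Improving that sum below $O(n)$ would require controlling the second moment of the number of the $\#B_m$ averaged geodesics passing through a fixed edge, i.e.\ ruling out their coalescence, and no such estimate is available. The logarithm in Theorem \ref{thm: DHS13} comes from a different mechanism: the averaging makes the \emph{first} moments $\mathbf{E}|\Delta_iF_n|\lesssim m^{1-d}$ small (see \eqref{eqn: onedelta}--\eqref{eqn: 54}), and it is the ratio $\mathbf{Var}(f)/\sum_i(\mathbf{E}|\Delta_if|)^2$ inside the logarithm of Lemma \ref{lem: FS} (equivalently the ratio $\|\Delta_if\|_2/\|\Delta_if\|_1$ in Talagrand's inequality) that produces the factor $\log n$. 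A plain one-sided modified log-Sobolev inequality fed with $\sum_e\mathbf{E}(\cdot)_+^2\lesssim n$ and a Herbst integration only reproduces concentration at scale $\sqrt{n}$, i.e.\ the classical Kesten--Talagrand result, not the scale $\sqrt{n/\log n}$.

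The paper's route repairs exactly this point: one applies the Falik--Samorodnitsky inequality \eqref{eqn: FS} to $f=e^{\lambda F_n/2}$, so that the logarithmic gain again comes from the small first moments of the increments of the \emph{exponentiated} function, yielding $\mathbf{Var}(e^{\lambda F_n/2})\le \tfrac{Cn}{\log n}\lambda^2\,\mathbf{E}e^{\lambda F_n}$ for $|\lambda|$ in a window of width $(\log n/n)^{1/2}$ (equation \eqref{eqn: K}); Proposition \ref{prop: varp} then converts this into the exponential tail at scale $\sqrt{n/\log n}$ --- the restricted window is why the tail is exponential in $\lambda$ rather than Gaussian. Two further points: your choice of $m$ polylogarithmic in $n$ would, even in the corrected scheme, make the individual influences only polylogarithmically small, so the logarithm in \eqref{eqn: FS} would give a gain of order $\log\log n$; the paper takes $m=\lceil n^{1/4}\rceil$ so that $\mathbf{E}|\Delta_iF_n|$ is polynomially small while the approximation error of Lemma \ref{lem: approx} stays below the target scale. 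Your truncation idea for the upper tail under $\mathbf{E}e^{\alpha t_e}<\infty$ is reasonable and is indeed where the extra moment hypothesis enters in \cite{DHS14}.
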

We discuss concentration in the final section.

The upper and lower bounds in \eqref{eqn: chi-lower} and \eqref{eqn: DHS13} imply
\[0\le \chi(d) \le 1/2,\]
in all dimensions, which is quite far from the values found based on arguments from statistical physics and simulations. We close this section by mentioning a remarkable result due to Kurt Johansson in an exactly solvable model of last passage percolation which confirms the predictions of physicists in dimension 2.
\index{Johansson, Kurt}%

In the model considered by Johansson, the weights $t_v$ are placed at the vertices of the square $([0,n]\times [0,n])\cap \mathbb{Z}^2$ in the first quadrant. Allowable paths from $(0,0)$ to $(n,n)$ are constrained to have non-decreasing coordinates, and the \emph{last passage time}
\begin{equation}\label{eqn: last-passage}
T_n = \max_{\pi : (0,0)\rightarrow (n,n)} \sum_{v\in \pi} t_v,
\end{equation}
where the maximum is taken over paths
\[\pi =\{x_0=(0,0),\ldots, x_n=(n,n)\},\]
such that $x_{i+1}= x_i + (1,0)$ or $x_{i+1}=x_i+(0,1)$.
\begin{theorem}[Johansson \cite{johansson2000}]
Let $A$ be an $n\times n$ matrix with complex Gaussian entries of mean zero and variance 1.  Let $\lambda_n$ be the largest eigenvalue of $AA^*$.
If the edge weights have geometric distribution with mean $1$,
\begin{equation}
\mathbf{P}(T_n\le t) = \mathbf{P}(\lambda_n \le t),
\end{equation}
for all $t\ge 0$.
\end{theorem}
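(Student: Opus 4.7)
The plan is to use the Robinson--Schensted--Knuth (RSK) correspondence to translate the combinatorial last-passage problem into a question about a random Young diagram, and then to identify the distribution of the largest row with the spectral law of $AA^*$. RSK is a bijection between $n\times n$ matrices $W=(w(i,j))$ with non-negative integer entries and pairs $(P,Q)$ of semistandard Young tableaux of common shape $\lambda=(\lambda_1\ge\cdots\ge\lambda_n\ge 0)$, with entries in $\{1,\ldots,n\}$. Its fundamental property, due to Greene, is that
$$\lambda_1 \;=\; \max_{\pi:(1,1)\to (n,n)} \sum_{v\in\pi} w(v) \;=\; T_n,$$
so everything reduces to computing the distribution of $\lambda_1$ under RSK applied to an i.i.d.\ array.

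The second step is to push forward the product measure through RSK. Since RSK is measure-preserving and a geometric weight assigns $\mathbf{P}(w=k)=(1-q)q^{k}$, the probability of producing a specific pair $(P,Q)$ of shape $\lambda$ is $(1-q)^{n^{2}}q^{|\lambda|}$. Summing over such pairs, using $\sum_{(P,Q)}1=s_{\lambda}(1,\ldots,1)^{2}$ together with Weyl's dimension formula $s_{\lambda}(1,\ldots,1)=\prod_{i<j}(\lambda_i-\lambda_j+j-i)/(j-i)$, one obtains, after the standard shift $h_i=\lambda_i+n-i$, the joint law
$$\mathbf{P}(h_1,\ldots,h_n) \;=\; \frac{1}{Z_{n,q}}\,\prod_{1\le i<j\le n}(h_i-h_j)^{2}\prod_{i=1}^{n}q^{\,h_i}$$
on strictly decreasing tuples of non-negative integers. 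This is the Meixner discrete orthogonal-polynomial ensemble, a determinantal point process whose correlation kernel can be written in closed form via Meixner polynomials. The same Cauchy-identity mechanism can be found more systematically through Schur measures, but the hands-on derivation above suffices.

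For the right-hand side of the claimed identity, a classical random-matrix computation (diagonalizing $AA^*$ and integrating over the unitary group through the Weyl/Dyson formula) yields the Laguerre Unitary Ensemble, with joint eigenvalue density proportional to $\prod_{i<j}(\mu_i-\mu_j)^{2}\prod_i e^{-\mu_i}$. The concluding step is to match the largest particles on the two sides: with the correct choice of $q$ so that $\mathbf{E}t_e=1$, the Meixner ensemble is related to LUE through the standard Meixner-to-Laguerre limit of the kernels, and the equality of distribution functions $\mathbf{P}(\lambda_1\le t)$ on the two sides follows from termwise convergence of the corresponding Fredholm determinants. The main obstacle is the second step: obtaining the explicit pushforward of the product law and recognizing the Vandermonde-squared structure after the shift $h_i=\lambda_i+n-i$, which is what makes the ensemble determinantal and, ultimately, matchable with LUE. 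The random-matrix side and the final identification are then standard, relying on well-known orthogonal-polynomial identities.
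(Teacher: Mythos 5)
The paper does not actually prove this theorem; it is quoted from Johansson's article as a closing remark, so there is no in-paper argument to compare yours against. Your outline follows the standard route of the original proof (RSK plus Greene's theorem, pushforward of the product measure to the Meixner-type ensemble, Weyl integration for $AA^*$ giving the Laguerre Unitary Ensemble), and the first two steps are essentially correct for the square geometric case.

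The gap is in the final step, and it is not cosmetic. A ``Meixner-to-Laguerre limit of the kernels'' is an asymptotic degeneration (as $q\uparrow 1$ after rescaling), so it cannot yield the exact finite-$n$ identity $\mathbf{P}(T_n\le t)=\mathbf{P}(\lambda_n\le t)$ for all $t$. In fact no identity of this form can hold for geometric weights: $T_n$ is then integer-valued, so $t\mapsto\mathbf{P}(T_n\le t)$ is a step function, whereas $\lambda_n$ has a density and its distribution function is continuous. The exact identity in Johansson's paper is for \emph{exponential} mean-one weights (the survey's statement is imprecise on this point). To close the argument you should either (a) run the computation directly in the exponential case: the pushforward of the exponential product measure under the continuous analogue of RSK has density proportional to $\prod_{i<j}(\mu_i-\mu_j)^2\,e^{-\sum_i\mu_i}$ on ordered nonnegative tuples, which you then match term by term with the LUE eigenvalue density of $AA^*$; or (b) first prove the exact finite-$n$, finite-$q$ statement that $\mathbf{P}(T_n^{\mathrm{geo}}\le t)$ equals the Meixner gap probability, and then take $q\uparrow 1$ with $t_e^{\mathrm{geo}}$ rescaled by $1-q$ on \emph{both} sides simultaneously, so that the identity survives in the limit as the exponential/LUE statement. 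Either way, the theorem you end up proving concerns exponential, not geometric, edge weights.
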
\index{Tracy-Widom distribution}%
The moments of the quantity
\[Z_n=\frac{\lambda_n-4n}{2^{4/3}n^{1/3}}\]
converges to the GUE \emph{Tracy-Widom distribution}. It follows immediately from this result that $\chi=1/3$.

\section{Kesten's linear bound for the variance}\index{Kesten, Harry}%
In this part of the article, we take a first step towards the derivation of Theorem \ref{thm: DHS13}, by giving a proof of Kesten's result from \cite{kesten93} that the variance is at most linear in any dimension (i.e. $\chi(d)\le 1/2$):
\begin{theorem}[Kesten \cite{kesten93}]\label{thm: var}
Assume $\mathbf{E}t_e^2<\infty$ and that \eqref{eqn: zeroprob} holds. Then
\begin{equation}
\mathbf{Var}(T_n)\le Cn.
\end{equation}
\end{theorem}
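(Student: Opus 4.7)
The plan is to apply the Efron--Stein inequality (equivalently, a martingale decomposition over the edges) to express $\mathbf{Var}(T_n)$ as a sum of one-edge resampling variances, then to use the deterministic fact that resampling a single edge weight moves $T_n$ only when that edge sits on a geodesic. This reduces the bound to the expected length of a geodesic, where the hypothesis $\mu(\{0\}) < p_c(d)$ enters through Kesten's geodesic length lemma. The assumption $\mathbf{E}t_e^2 < \infty$ is used only to absorb the contribution of a single edge.

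More concretely, let $(t_e')_{e \in \mathcal{E}(\mathbb{Z}^d)}$ be an independent copy of the weights, and write $t^{(e)}$ and $T_n^{(e)}$ for the configuration and passage time with $t_e$ replaced by $t_e'$. The Efron--Stein inequality gives
\[\mathbf{Var}(T_n) \le \frac{1}{2}\sum_e \mathbf{E}\bigl[(T_n - T_n^{(e)})^2\bigr].\]
Fix any measurable selection rule for geodesics and let $\gamma = \gamma(t)$, $\gamma^{(e)} = \gamma(t^{(e)})$. Using $\gamma$ as a competitor path for the perturbed weights gives the deterministic bound $(T_n^{(e)} - T_n)_+ \le (t_e' - t_e)_+\,\mathbf{1}\{e \in \gamma\}$, and symmetrically $(T_n - T_n^{(e)})_+ \le (t_e - t_e')_+\,\mathbf{1}\{e \in \gamma^{(e)}\}$. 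Since at most one of these two positive parts is nonzero,
\[(T_n - T_n^{(e)})^2 \le (t_e - t_e')_+^2\,\mathbf{1}\{e \in \gamma^{(e)}\} + (t_e' - t_e)_+^2\,\mathbf{1}\{e \in \gamma\}.\]
Bounding $(t_e - t_e')_+^2 \le t_e^2$ and using that $\gamma^{(e)}$ is independent of $t_e$ (and symmetrically $\gamma$ of $t_e'$), together with $\mathbf{P}(e\in\gamma^{(e)}) = \mathbf{P}(e\in\gamma)$, yields
\[\mathbf{E}[(T_n - T_n^{(e)})^2] \le 2\,\mathbf{E}[t_e^2]\,\mathbf{P}(e \in \gamma).\]
Summing over $e$ then gives $\mathbf{Var}(T_n) \le \mathbf{E}[t_e^2]\cdot \mathbf{E}|\gamma|$.

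The remaining step, and the main obstacle, is to show $\mathbf{E}|\gamma| \le Cn$ under $\mu(\{0\}) < p_c(d)$. This is Kesten's geodesic length lemma, proved by a renormalization argument: one partitions $\mathbb{Z}^d$ into macroscopic boxes and calls a box \emph{good} if it does not contain a large cluster of zero-weight edges, and the subcriticality assumption $\mu(\{0\}) < p_c(d)$ ensures that good boxes percolate. A geodesic of length much greater than $n$ must then cross too many good boxes and therefore accumulate weight strictly exceeding the deterministic upper bound $T_n \le \sum_{k=1}^{n} t_{e_k}$ coming from the axis path, so the event $\{|\gamma| > Cn\}$ has probability at most $e^{-cn}$. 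Combined with a crude integrable almost sure upper bound on $|\gamma|$ (e.g.\ in terms of $T_n$ and a positive point in the support of $\mu$ on its positive-weight subcluster), this gives $\mathbf{E}|\gamma| = O(n)$, whence $\mathbf{Var}(T_n) \le Cn$.
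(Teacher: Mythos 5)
Your proposal is correct and follows essentially the same route as the paper: the Efron--Stein inequality, the deterministic one-edge linearization via geodesics, and Kesten's geodesic length estimate $\mathbf{E}|\gamma|\le Cn$ derived from the percolation lemma under $\mu(\{0\})<p_c(d)$. The only cosmetic difference is that you work with a measurably selected geodesic $\gamma$ and symmetrize by hand, whereas the paper phrases the linearization in terms of $G_n$, the intersection of all geodesics, and then symmetrizes with the indicator $\mathbf{1}_{t_e'>t_e}$; the two versions are interchangeable.
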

Here, as later in this article, we use the symbol $C$ to denote a constant independent of $n$, whose value may change between occurrences. In \cite{kesten93}, Theorem \ref{thm: var} is supplemented by a concentration result on scale $n$ in case $\mu$ has exponential moments. We discuss concentration in the final part of the chaper.

Our main goal in this article is to make explicit the ideas behind the proofs of the results. We give proofs when they are deemed instructive, but we do not always provide all the details. The reader can find these in the relevant papers. In particular, in several places below we deal with infinite sums whose convergence can readily be justified by suitable approximations, without giving the details of such justifications.

A common theme in the derivation of the theorem and the refinements leading to \eqref{eqn: DHS13} is the application of standard tools in concentration inequalities to the random variable $T_n$, viewed as a function of the independent edge weight variables $t_e$, combined with some simple inputs from the model. We summarize the two key properties of the model we use in the next section. We will see that we use only a limited amount of  information about the model. This makes it obvious that the techniques we present here are unlikely to get us close to the values of the exponents predicted by physicists.

For Theorem \ref{thm: var}, the general concentration tool we use and apply to $T_n$ is the \emph{Efron-Stein inequality}:
\index{Efron-Stein inequality}%
\begin{lemma}\label{lem: ES}
Let $f:\mathbb{R}^\mathbb{N} \rightarrow \mathbb{R}$ be a measurable function, and $X_1, X_2, \ldots$ be a collection of independent random variable. Let also $X_i'$, $i\ge 0$, be an independent copy of $X_i$. Assume the random variable $f(X_1,X_2, \ldots)$ has finite second moment.
Then
\begin{equation}
\label{eqn: ES}
\mathbf{Var}(f(X_1,X_2,\ldots)) \le \frac{1}{2}\sum_{i=1}^\infty \mathbf{E}\Bigl[f(X_1,\ldots,X_i, \ldots)-f(X_1, \ldots, X_i', \ldots)\Bigr]^2.
\end{equation}

\end{lemma}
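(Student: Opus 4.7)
My plan is to prove the Efron–Stein inequality by combining the Doob martingale decomposition with a symmetrization identity and a Jensen-type step. Write $S := f(X_1,X_2,\ldots)$ and let $\mathcal{F}_i = \sigma(X_1,\ldots,X_i)$, with $\mathcal{F}_0$ trivial. Set
\[
\Delta_i := \mathbf{E}[S\mid \mathcal{F}_i] - \mathbf{E}[S\mid \mathcal{F}_{i-1}].
\]
Since $S\in L^2$ and $\mathbf{E}[S\mid \mathcal{F}_i]\to S$ in $L^2$, one obtains $S - \mathbf{E}S = \sum_{i\ge1}\Delta_i$ in $L^2$. The increments $\Delta_i$ are orthogonal martingale differences, so $\mathbf{Var}(S) = \sum_i \mathbf{E}\Delta_i^2$. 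This is the backbone of the argument; the remaining work is to bound $\mathbf{E}\Delta_i^2$ by the $i$-th resampling term on the right of \eqref{eqn: ES}.

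The first key observation is that $\mathbf{E}[\Delta_i^2\mid \mathcal{F}_{i-1}]$ is exactly the conditional variance, under $X_i$ alone, of the function
\[
h_i(x_1,\ldots,x_i) := \mathbf{E}\bigl[f(x_1,\ldots,x_i,X_{i+1},\ldots)\bigr],
\]
evaluated at $(X_1,\ldots,X_i)$. In other words, we can read $h_i$ as the expectation of $f$ over the coordinates strictly after $i$, while fixing coordinates up through $i$. The second key step is a Jensen-type bound. Writing $Z_i := (X_{j})_{j \neq i}$ for all coordinates other than $X_i$, one has, for i.i.d.\ copies $X_i, X_i'$,
\[
\mathbf{Var}_{X_i}\bigl(h_i\bigr) \;=\; \tfrac{1}{2}\mathbf{E}_{X_i,X_i'}\bigl[(\mathbf{E}_{X_{i+1},\ldots}[f-f^{(i)}])^2\bigr] \;\le\; \tfrac{1}{2}\mathbf{E}_{X_i,X_i'}\mathbf{E}_{X_{i+1},\ldots}\bigl[(f-f^{(i)})^2\bigr],
\]
where $f^{(i)}$ denotes $f$ with $X_i$ replaced by $X_i'$, and the inequality is the Cauchy–Schwarz/Jensen bound $(\mathbf{E}[Y])^2\le \mathbf{E}[Y^2]$ applied inside the expectation in $(X_i,X_i')$.

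Taking outer expectation, one gets $\mathbf{E}\Delta_i^2 \le \tfrac{1}{2}\mathbf{E}[(f-f^{(i)})^2]$, and summing over $i$ yields \eqref{eqn: ES}. Two technical points deserve care. First, the rearrangement of expectations in the Jensen step uses independence of $X_i$ from all other coordinates; one can equivalently phrase it as the standard inequality $\mathbf{Var}_{X_i}(\mathbf{E}_{Z_i}[g]) \le \mathbf{E}_{Z_i}[\mathbf{Var}_{X_i}(g)]$ for $g = f(X_1,\ldots,X_i,\ldots)$. Second, because the paper works with an infinite sequence $X_1,X_2,\ldots$, one should justify the convergence of the Doob martingale, which follows from $S \in L^2$, and the absolute convergence of $\sum_i \mathbf{E}\Delta_i^2$ from the same bound. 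I expect the Jensen/tensorization step to be the conceptually delicate part, since writing $\Delta_i$ in terms of a single-coordinate variance and then exchanging it with the expectation over the remaining coordinates is exactly where "tensorization of variance" enters; once that is set up, the remainder is the symmetrization identity $\mathbf{Var}(Y) = \tfrac12 \mathbf{E}[(Y-Y')^2]$.
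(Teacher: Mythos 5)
Your proposal is correct and follows essentially the same route as the paper: the Doob martingale decomposition along $\mathcal{F}_i=\sigma(X_1,\ldots,X_i)$, orthogonality of the increments, a Jensen step to pass from $\mathbf{E}\Delta_i^2$ to a single-coordinate variance, and the symmetrization identity $\mathbf{Var}(Y)=\tfrac12\mathbf{E}[(Y-Y')^2]$. The only cosmetic difference is the order of operations (the paper applies Jensen first to get $\mathbf{E}\Delta_i^2\le\mathbf{E}\,\mathbf{Var}_i(f)$ and then symmetrizes, while you symmetrize the integrated-out function $h_i$ and then apply Jensen to the inner expectation), which yields the identical bound.
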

\begin{proof}
Since there is no risk of confusion, we will denote the random variable
\[f(X_1,X_2,\ldots)\]
simply by $f$.
We define a sequence of $\sigma$-algebras by
\begin{align*}
\mathcal{F}_0&=\varnothing\\
\mathcal{F}_i&= \sigma(X_1,\ldots,X_i),
\end{align*}
for $i\ge 0$. Consider the martingale increments
\[\Delta_i = \mathbf{E}[f\mid \mathcal{F}_i]-\mathbf{E}[f\mid \mathcal{F}_{i-1}].\]
Then, by orthogonality of the $\Delta_i$, we have
\begin{equation}\label{eqn: martingale}
\mathbf{E}[\mathbf{E}[f\mid \mathcal{F}_n]^2] - \mathbf{E}[f]^2 = \mathbf{E}[\mathbf{E}[f\mid \mathcal{F}_n]^2] - \mathbf{E}[f\mid \mathcal{F}_0]^2=\sum_{i=1}^n \mathbf{E}[\Delta_i^2].
\end{equation}
Denote by $\mathbf{E}_i$ integration over $X_i$:
\[\mathbf{E}_i f = \int f(X_1, \ldots, x_i, \ldots) \,\mathbf{P}_{X_i}(\mathrm{d} x_i).\]

Here we have denoted by $\mathbf{P}_{X_i}$ the distribution of $X_i$ on $\mathbb{R}$.
Notice that
\[ \Delta_i= \mathbf{E}[f\mid \mathcal{F}_i]-\mathbf{E}[f\mid \mathcal{F}_{i-1}]= \mathbf{E}[f-\mathbf{E}_i[f]\mid \mathcal{F}_i].\]
By Jensen's inequality, we have
\[\mathbf{E}\Delta_i^2 = \mathbf{E}[\abs{\mathbf{E}[f-\mathbf{E}_i[f]\mid \mathcal{F}_i]}^2]\le \mathbf{E}[\abs{f-\mathbf{E}_i f}^2]= \mathbf{E}\mathbf{Var}_i (f).\]
where 
\[\mathbf{Var}_i (f) = \int f^2(X_1,\ldots, x_i, \ldots)\,\mathbf{P}_{X_i}(\mathrm{d}x_i)-\left(\int f(X_1,\ldots, x_i, \ldots)\,\mathbf{P}_{X_i}(\mathrm{d}x_i)\right)^2.\]
We can rewrite the latter quantity as
\[\mathbf{Var}_i (f) = \frac{1}{2}\int\int (f(X_1,\ldots, x_i, \ldots) - f(X_1,\ldots,x_i', \ldots))^2\,\mathbf{P}_{X_i}(\mathrm{d}x_i)\mathbf{P}_{X_i'}(\mathrm{d}x_i').\]

Returning to \eqref{eqn: martingale}, this gives
\begin{align*}
\mathbf{E}[\mathbf{E}[f\mid \mathcal{F}_n]^2] - \mathbf{E}[f]^2 &\le \frac{1}{2}\sum_{i=1}^n \mathbf{E}\Bigl[f(X_1,\ldots,X_i, \ldots)-f(X_1, \ldots, X_i', \ldots)\Bigr]^2\\
&\le \frac{1}{2}\sum_{i=1}^\infty \mathbf{E}\Bigl[f(X_1,\ldots,X_i, \ldots)-f(X_1, \ldots, X_i', \ldots)\Bigr]^2.
\end{align*}
By the assumption of finite second moment, the Doob martingale $\mathbf{E}[f\mid \mathcal{F}_n]$ converges to $f$ in $L^2$, and we obtain the result.
\end{proof}

\subsection{Two ingredients from FPP}\label{sec: ingredients} The two pieces of information about first passage percolation we use to pass from Lemma \ref{lem: ES} to Theorem \ref{thm: var} are
\begin{enumerate}
\item A linearization of the passage time. That is, we need to know how the random variable $T_n$ changes when we modify the value of a single edge, conditional on all the others. This means an estimate for
\[T_n(t, (t_{e'})_{e'\neq e})-T_n(s,(t_{e'})_{e'\neq e}).\]
\item An estimate showing that \emph{geodesic paths} in first-passage percolation have typical size of order $n$. Geodesic paths are minimizing paths $\gamma$ in \eqref{eqn: passagetime}. Under our assumption \ref{eqn: zeroprob} \cite{kestenaspects, 50years}, such minimizing paths exist almost surely, although they may not be unique if $\mu$ has atoms.
\index{geodesic}%
\end{enumerate}

To state more precisely the results we will need, define $G_n$ to be the intersection of all geodesics from the origin of $\mathbb{Z}^d$ to $n\mathbf{e}_1$. Then, concerning the point 2. above, we have
\begin{lemma}[Kesten] \label{lem: kesten-size}
There is a constant $C$ such that, for all $n$
\begin{equation}\label{eqn: Gn-bound}
\mathbf{E}[\#G_n^2] \le C\mathbf{E}T_n^2,
\end{equation}
where $\#G_n$ denotes the cardinality of $G_n$.
\end{lemma}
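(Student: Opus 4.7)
The plan is to reduce the lemma to an estimate of the form $\mathbf{E}|\gamma_n|^2 \le C n^2$ for the length of a single geodesic $\gamma_n$, and then recover the stated inequality from the observation that $\mathbf{E}T_n^2 \ge c n^2$ under \eqref{eqn: zeroprob} (since the subadditive limit $\nu$ in Theorem \ref{thm: 1storder} is strictly positive under that assumption). The reduction $\#G_n \le |\gamma_n|$ is trivial because $G_n$ is an intersection of geodesics, and $\gamma_n$ can be selected by any deterministic tie-breaking rule.

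To bound $|\gamma_n|$, use \eqref{eqn: zeroprob} to pick $a>0$ with $p:=\mu([0,a])<p_c(d)$, and call an edge \emph{slow} if $t_e>a$ and \emph{fast} if $t_e\le a$. Slow edges contribute at least $a$ each to $T_n$, so their number $N_s$ in $\gamma_n$ satisfies $N_s \le T_n/a$. The fast edges form subcritical Bernoulli$(p)$ bond percolation, whose open clusters $\mathcal{C}(v)$ therefore have exponentially decaying tails, and hence all finite moments. The fast edges of $\gamma_n$ decompose into maximal sub-paths, each contained in a single fast cluster, with consecutive sub-paths separated by slow edges; thus $\gamma_n$ meets at most $N_s+1$ distinct fast clusters, each of which can be indexed by an endpoint of an adjacent slow edge (or by $0$, $n\mathbf{e}_1$). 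This yields the pointwise bound
\[
|\gamma_n| \;\le\; T_n/a \;+\; \sum_{j}|\mathcal{C}(v_j)|,
\]
with at most $2(T_n/a+1)$ indexing vertices $v_j$.

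Squaring and applying Cauchy--Schwarz give $|\gamma_n|^2 \le 2(T_n/a)^2 + 2(T_n/a+1)\sum_j|\mathcal{C}(v_j)|^2$. The first term is $O(n^2)$ in expectation under the hypothesis $\mathbf{E}t_e^2<\infty$ of Theorem \ref{thm: var}. The main technical obstacle is the second term: the number of clusters visited, their sizes, and the indexing vertices are all coupled to $T_n$ through the unknown geodesic, so a naive splitting (e.g.\ truncating at cluster size $M = C_0\log n$ inside a large deterministic box) produces a parasitic $(\log n)^2$ factor that is just shy of the stated bound.

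The cleanest way around this --- and the route I would follow Kesten in taking --- is to upgrade the pointwise bound above to an \emph{exponential} moment estimate $\mathbf{E}\exp(\lambda|\gamma_n|/n)\le C$ for some $\lambda,C>0$ independent of $n$. This is done by a renewal-style decomposition of $\gamma_n$ along its slow edges, exploiting the independence of disjoint fast clusters at the level of moment generating functions rather than moments, and absorbing the exponentially decaying cluster tails into a small factor $<1$ per renewal step. From such a bound, $\mathbf{E}|\gamma_n|^k\le C_k n^k$ for every $k$, and the $k=2$ case combined with $\mathbf{E}T_n^2\ge c n^2$ yields the lemma.
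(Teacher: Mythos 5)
Your overall architecture differs from the paper's: the text deduces Lemma \ref{lem: kesten-size} in two lines from Kesten's path estimate, Lemma \ref{lem: perc-lem}, which is stated immediately before it. Writing $Z_n = (\#G_n)\,\mathbf{1}_{T_n< a\#G_n}$, Lemma \ref{lem: perc-lem} gives $\mathbf{P}(Z_n\ge\lambda)\le e^{-c\lambda}$ uniformly in $n$ (a geodesic is in particular a self-avoiding path through $0$ with $T(\gamma)=T_n$), so $\mathbf{E}Z_n^2\le C$ and $\mathbf{E}[\#G_n^2]\le a^{-2}\mathbf{E}T_n^2+\mathbf{E}Z_n^2\le C\mathbf{E}T_n^2$. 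You instead set out to prove $\mathbf{E}|\gamma_n|^2\le Cn^2$ directly and recover the statement from $\mathbf{E}T_n^2\ge cn^2$; that reduction is fine (given $\nu>0$ from Theorem \ref{thm: 1storder}), but it means you are in effect re-proving Lemma \ref{lem: perc-lem}, which is where all the work lives.

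And that is where the gap sits. Your pointwise bound $|\gamma_n|\le T_n/a+\sum_j|\mathcal{C}(v_j)|$ and the Cauchy--Schwarz step are correct, and you rightly identify that the clusters $\mathcal{C}(v_j)$ are selected adaptively by the geodesic. But the proposed fix --- a ``renewal-style decomposition'' yielding $\mathbf{E}\exp(\lambda|\gamma_n|/n)\le C$ --- is asserted rather than proved, and it is not a routine renewal argument: the geodesic is a global minimizer, not a sequentially revealed path, so there is no filtration along which the successive fast clusters are fresh and their moment generating functions factorize. The standard way to decouple is to union bound over all \emph{deterministic} self-avoiding paths $\gamma$ with $\#\gamma\ge Ln$ and show that $\mathbf{P}(T(\gamma)<a\#\gamma)$ beats the entropy $(2d)^{\#\gamma}$; a naive binomial estimate does this only when $\mu([0,a])$ is small (roughly below $1/(2d)$), whereas \eqref{eqn: zeroprob} only guarantees $\mu([0,a])<p_c(d)$ for small $a$, and closing that gap is precisely the renormalization argument behind Kesten's Lemma \ref{lem: perc-lem}. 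So either quote Lemma \ref{lem: perc-lem} and finish as above, or supply that missing argument; as written, the crucial exponential moment bound is unsupported.
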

To understand why this implies that geodesics are one dimensional, note that by simply considering the path
\[(0,\ldots, 0), \mathbf{e}_1, 2\mathbf{e}_1,\ldots, n\mathbf{e}_1,\]
we have
\[\mathbf{E}T_n^2 \le n \mathbf{E}t_e^2 + n(n-1)(\mathbf{E}t_e)^2\le Cn^2.\]
With \eqref{eqn: Gn-bound}, this gives
\begin{equation}\label{eqn: Gn-lin}
\mathbf{E}\#G_n \le (\mathbf{E}(\#G_n)^2)^{1/2}\le Cn.
\end{equation}
Note that \eqref{eqn: Gn-bound} is easily seen to be true if the edge weight distribution is supported on $[a,\infty)$ for some $a>0$. Indeed, in that case we always have
\[\#G_n \le \sum_{e\in \gamma} t_e \le \frac{1}{a}T_n,\]
where $\gamma$ is any minimizing path. This simple argument in the case when the distribution is supported on values bounded below suggests that to prove Lemma \ref{lem: kesten-size}, we must control the proportion of very small edge weights along geodesics. That is the purpose of the following result.
\begin{lemma}[Kesten]\label{lem: perc-lem}
For a lattice path $\gamma$, we define
\[T(\gamma) =\sum_{e\in \gamma} t_e.\]
There exist $a$, $c$ such that for all $n\ge 0$,
\[\mathbf{P}(\exists \text{ self-avoiding } \gamma \text{ containing } 0 \text{ with } \#\ge n \text{ but } T(\gamma)< an)\le e^{-cn}.\]
\end{lemma}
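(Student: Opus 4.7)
The strategy is to bound the event by a large-deviation estimate for a sum of i.i.d.\ weights along each candidate self-avoiding path, then union-bound over paths. The key input is that because $\mu(\{0\}) < p_c(d)$, for small enough $\delta > 0$ the density of edges with $t_e \le \delta$ is subcritical, so small-weight edges are too sparse for a long self-avoiding path to accumulate unusually low total weight.

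First, since $\mu([0,\delta]) \downarrow \mu(\{0\})$ as $\delta \downarrow 0$, fix $\delta > 0$ with $p := \mu([0,\delta]) < p_c(d)$. Call an edge \emph{closed} if $t_e > \delta$. Each closed edge contributes at least $\delta$ to $T(\gamma)$, so on the event of the statement $\gamma$ carries at most $an/\delta$ closed edges. Since $\gamma$ is self-avoiding with $\#\gamma \ge n$ and contains $0$, one of its two halves issuing from $0$ has at least $\lceil n/2 \rceil$ edges; truncating to a subpath of length exactly $m := \lceil n/2 \rceil$ starting at $0$ yields a self-avoiding path with total weight $\le T(\gamma) < an \le 2 a m$. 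Thus it suffices to show
\[
\mathbf{P}(\exists \text{ s.a. } \gamma \text{ from } 0,\ \#\gamma = m,\ T(\gamma) < 2 a m) \le e^{-c m}
\]
for some constants $a, c > 0$.

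For the main estimate, I would fix $\lambda > 0$ and apply exponential Chebyshev to a single path of length $m$: with $M(\lambda) := \mathbf{E}[e^{-\lambda t_e}]$,
\[
\mathbf{P}(T(\gamma) < 2 a m) \le e^{2 \lambda a m}\, M(\lambda)^m.
\]
Union-bounding over the at most $2d(2d-1)^{m-1}$ self-avoiding paths from $0$ of length $m$ gives
\[
\mathbf{P}(\exists \gamma) \le 2d \bigl[(2d-1)\, e^{2 \lambda a}\, M(\lambda)\bigr]^m.
\]
Since $M(\lambda) \le p + (1-p) e^{-\lambda \delta}$ and $M(\lambda) \to \mu(\{0\})$ as $\lambda \to \infty$, one first chooses $\lambda$ large so that $(2d-1) M(\lambda) < 1$, then $a > 0$ small so the bracket is $< e^{-c}$ for some $c > 0$, completing the exponential bound.

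The main obstacle is that this union bound is only tight enough when $\mu(\{0\})\,(2d-1) < 1$, which in dimensions $d \ge 2$ is strictly stronger than $\mu(\{0\}) < p_c(d)$. In the intermediate regime $1/(2d-1) \le \mu(\{0\}) < p_c(d)$ the naive count $(2d-1)^m$ of self-avoiding walks is too generous, and one must exploit subcriticality more carefully. The standard remedy is to renormalize onto coarse blocks of side length $L$ so that the ``bad'' blocks (those touching a large open cluster) form a Bernoulli percolation with parameter much smaller than $1/(2d-1)$, after which the above argument applies on the renormalized scale. Menshikov's sharpness theorem $\mathbf{P}(\#C(0) \ge k) \le C_1 e^{-C_2 k}$, valid throughout $p < p_c(d)$, is the essential tool enabling this renormalization.
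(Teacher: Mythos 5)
The paper does not actually prove this lemma --- it is quoted as Kesten's result (Proposition 5.8 of his St--Flour notes) and used as a black box --- so I am judging your argument on its own merits against the standard proof. Your reduction steps are fine: fixing $\delta$ with $p=\mu([0,\delta])<p_c(d)$, observing that the event forces at most $an/\delta$ ``closed'' edges on $\gamma$, passing to a half of $\gamma$ issuing from $0$ of length exactly $m=\lceil n/2\rceil$, and the exponential Chebyshev plus union bound over the $\le 2d(2d-1)^{m-1}$ self-avoiding paths are all correct. You are also right, and commendably explicit, that this only closes the argument when $\mu(\{0\})<1/(2d-1)$, which in $d\ge 2$ is strictly weaker than the hypothesis $\mu(\{0\})<p_c(d)$ (e.g.\ $p_c(2)=1/2>1/3$).

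The problem is that the remaining regime $1/(2d-1)\le\mu(\{0\})<p_c(d)$ is not a corner case: it is the entire content of the lemma, the only place where the percolation hypothesis \eqref{eqn: zeroprob} is used in full strength, and your treatment of it is a two-sentence pointer rather than a proof. To actually execute the renormalization you must: (i) define good $L$-blocks (say, no $\delta$-open cluster of diameter $\ge L/10$ meets the block) and use the subcritical decay of the cluster \emph{radius} (Menshikov/Aizenman--Barsky --- radius decay suffices here, you do not need volume decay) to make the bad probability $o_L(1)$; (ii) deal with the fact that bad blocks are \emph{not} an independent Bernoulli field but only finite-range dependent, via Liggett--Schonmann--Stacey domination or a direct Peierls count over block animals; (iii) prove the combinatorial step that a self-avoiding path of length $m$ with fewer than $\epsilon m$ closed edges forces a connected set of order $m/L^{d}$ blocks among which a positive fraction are bad, so that the renormalized union bound beats the block connective constant once $L$ is large. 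None of these steps is carried out, and (iii) in particular is where most of the work in Kesten's proof lives. As written, your argument proves the lemma only under the stronger hypothesis $\mu(\{0\})(2d-1)<1$; the general case remains a genuine gap.
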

Given the lemma, we obtain Lemma \ref{lem: kesten-size} by considering
\[Z_n = (\#G_n) \mathbf{1}_{T_n < a\#G_n}.\]
Using Lemma \ref{lem: perc-lem}, we have
\[\mathbf{P}(Z_n \ge n) \le e^{-Cn},\]
so
\[\mathbf{E}[\#G_n^2]\le a^{-2}\mathbf{E}T_n^2+\mathbf{E}Z_n^2\le C\mathbf{E}T_n^2.\]

For the first ``ingredient'' above, we have the following concerning the effect of changing one edge weight:
\begin{lemma}\label{linear}
If $t\ge s$, then
\[0\le T_n(t, (t_{e'})_{e'\neq e})-T_n(s,(t_{e'})_{e'\neq e}) \le (t-s) \mathbf{1}_{e\in G_n}(s,(t_{e'})_{e'\neq e}),\]
where the indicator function is 1 on the event if the edge $e$ is in the intersection of all geodesics from $0$ to $n\mathbf{e}_1$ in the configuration where the edge $e$ has weight $s$.
\end{lemma}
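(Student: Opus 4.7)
The plan is to compare the two configurations by evaluating the passage time of a single well-chosen path, leaning on the elementary fact that altering one edge weight from $s$ to $t$ changes the cost of any fixed path $\gamma$ by exactly $(t-s)\mathbf{1}_{e \in \gamma}$. The whole statement should then fall out by choosing $\gamma$ to be a geodesic of the $s$-configuration, whose existence is guaranteed almost surely by Kesten's result under the assumption \eqref{eqn: zeroprob}.

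First I would dispatch the lower bound by pure monotonicity: since $t \ge s$, every self-avoiding lattice path from $0$ to $n\mathbf{e}_1$ has cost no smaller in the $t$-configuration than in the $s$-configuration, so taking the infimum gives $T_n(t,(t_{e'})_{e' \neq e}) \ge T_n(s,(t_{e'})_{e' \neq e})$. For the upper bound, I would fix an arbitrary geodesic $\gamma^*$ of the $s$-configuration, so that $T_n(s,(t_{e'})_{e' \neq e})$ equals the sum of its edge weights, and note the one-line identity
\[
T_n(t,(t_{e'})_{e' \neq e}) \le \sum_{e' \in \gamma^*} t_{e'}\Big|_{t \text{ at } e} = T_n(s,(t_{e'})_{e' \neq e}) + (t-s)\mathbf{1}_{e \in \gamma^*}.
\]

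The only subtle point is turning $\mathbf{1}_{e \in \gamma^*}$ into $\mathbf{1}_{e \in G_n}$, which requires a case split. If $e \notin G_n(s,(t_{e'})_{e' \neq e})$, then by the very definition of $G_n$ as the intersection of all $s$-geodesics there exists some $s$-geodesic $\gamma^*$ avoiding $e$; choosing this $\gamma^*$ in the displayed bound forces $T_n(t,\cdot) \le T_n(s,\cdot)$, and combined with monotonicity this yields equality, matching the claim (the right-hand side is zero). If instead $e \in G_n(s,(t_{e'})_{e' \neq e})$, then \emph{every} $s$-geodesic contains $e$, so the displayed bound gives $T_n(t,\cdot) - T_n(s,\cdot) \le t-s$, again matching the claim.

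I do not anticipate a real obstacle here; the lemma is essentially a definitional consequence of being a geodesic, with the only mild care being that the ``off'' case of the indicator must be exploited by choosing an $s$-geodesic that avoids $e$, rather than an arbitrary one. The measurability of $\mathbf{1}_{e \in G_n}$ as a function of the weights, and the a.s.\ existence of geodesics, are standard inputs already invoked in Section~\ref{sec: ingredients}.
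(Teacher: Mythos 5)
Your proof is correct and is exactly the argument the paper sketches in its two-sentence proof: monotonicity gives the lower bound, and evaluating the $t$-configuration on a suitably chosen $s$-geodesic (one avoiding $e$ when $e\notin G_n$, any one when $e\in G_n$) gives the upper bound. Your write-up simply fills in the case split and the a.s.\ existence of geodesics that the paper leaves implicit.
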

\begin{proof}
Unless the edge $e$ is in the intersection of all geodesic paths when $t_e=s$, raising the weight of that edge has no effect on the passage time. If $e\in G_n$ for $t_e=s$, then the passage time changes at most linearly in $t_e$.
\end{proof}

\subsection{Finishing the proof of Theorem \ref{thm: var}}
Applying Lemma \ref{linear} with $f=T_n$ and $X_i=t_{e_i}$ for some enumeration
\[e_1,e_2,\ldots\]
of the edges, we see that the quantity to control is
\[\sum_{e\in\mathcal{E}(\mathbb{Z}^d)} \mathbf{E}\Bigl[T_n(t_e,(t_{f})_{f\neq e})- T_n(t_e',(t_{f})_{f\neq e})\Bigr]^2.\]
To simplify notation, we represent the vector $(t_{f})_{f\neq e}$ by $t_{e^c}$.
Since $t_e$ and $t_{e'}$ have the same distribution, we have
\[\mathbf{E}[T_n(t_e,t_{e^c})-T_n(t_e',t_{e^c})]^2 =2\mathbf{E}\Bigl[(T_n(t_e,t_{e^c})-T_n(t_e',t_{e^c}))^2\mathbf{1}_{t_e'>t_e}\Bigr].\]
The insertion of the indicator function $\mathbf{1}_{t_e>t_e'}$ allows us to apply Lemma \ref{linear} to bound
\begin{equation}\label{eqn: varbound}
\begin{split}
\mathbf{E}[(T_n(t_e,t_{e^c})-T_n(t_e',t_{e^c}))^2\mathbf{1}_{t_e>t_e'}]&\le \mathbf{E}[(t_e'-t_e)_+^2 \mathbf{1}_{e\in G_n(t_e,t_e^c)}]\\
&\le \mathbf{E}[(t_e')^2 \mathbf{1}_{e\in G_n(t_e,t_e^c)}]\\
&= \mathbf{E}(t_e')^2 \mathbf{P}( e\in G_n).
\end{split}
\end{equation}
In the final step, we used independence between $t_e'$ and $(t_e,t_e^c)$. Since $\mathbf{E}t_e^2 <\infty$, we have now shown that
\[\mathbf{Var}(T_n) \le C\sum_{e\in \mathcal{E}(\mathbb{Z}^d)} \mathbf{P}(e\in G_n) = C\mathbf{E}[\# G_n].\]
We can now use our second ingredient, the linear bound for the size of $G_n$, to conclude:
\begin{equation}
\label{eqn: kestenvarconc}
\mathbf{Var}(T_n) \le C\mathbf{E}\#G_n \le Cn.
\end{equation}

\subsection{Summary}
Let us summarize the steps that led us to Theorem \ref{thm: var}. The improvement \eqref{eqn: DHS13} will be obtained by following the same strategy, replacing each step with a slight refinement.
\begin{enumerate}
\item We used a martingale decomposition to isolate the contribution of the individual edges. (The Efron-Stein inequality, Lemma \ref{lem: ES}).
\item Using the linearization in Lemma \ref{linear} (``Ingredient 1''), we find that the variance $\mathbf{Var}(T_n)$ is bounded by a sum, where each summand has the form $\mathbf{E}[X\mathbf{1}_{e\in G_n}]$, where $X$ can be decorrelated from the indicator.
\item The sum over $e$ of $\mathbf{P}(e\in G_n)$ contributes $Cn$ (``Ingredient 2'').
\end{enumerate}

\section{The Benjamini-Kalai-Schramm strategy}
In their paper \cite{BKS}, Benjamini, Kalai and Schramm (BKS) showed how to obtain the sublinear bound \eqref{eqn: DHS13} for an edge weight distribution of Bernoulli type
\begin{equation}\label{eqn: mubernoulli}
\mu = \frac{1}{2}\delta_a+\frac{1}{2}\delta_b,
\end{equation}
where $0<a<b$.
\begin{theorem}[Benjamini, Kalai, Schramm]\label{thm: BKS}
If $\mu$ has the form \eqref{eqn: mubernoulli}, then there is a constant $C>0$ such that 
\begin{equation}\label{eqn: BKSbound}
\mathbf{Var}(T_n) \le C\frac{n}{\log n}.
\end{equation}
\end{theorem}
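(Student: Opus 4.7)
The plan is to combine the Efron-Stein strategy from Section 2 with the two additional ingredients introduced by Benjamini-Kalai-Schramm: a box-averaging smoothing of the passage time, and a logarithmic improvement over Efron-Stein coming from Bonami-Beckner hypercontractivity, which is available here precisely because $\mu$ is a Bernoulli distribution.

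The first step is to replace $T_n$ by a smoothed passage time
\[
F = \frac{1}{|B_m\cap \Z^d|}\sum_{x\in B_m\cap \Z^d} T(x,x+n\mathbf{e}_1),
\]
where $B_m$ is a box of radius $m$, to be chosen later. Using $\mathbf{Var}(T_n)\le 2\mathbf{Var}(F)+2\mathbf{E}(T_n-F)^2$, the triangle inequality $|T_n-T(x,x+n\mathbf{e}_1)|\le T(0,x)+T(n\mathbf{e}_1,x+n\mathbf{e}_1)$ for $x\in B_m$, and the moment estimate $\mathbf{E}T(0,x)^2\le C|x|_1^2$ which follows from Kesten's Theorem \ref{thm: var}, one obtains
\[
\mathbf{Var}(T_n)\le 2\mathbf{Var}(F)+Cm^2.
\]
It therefore suffices to bound $\mathbf{Var}(F)$ by $O(n/\log n)$.

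The second step is to apply a Talagrand-type $L^1$-$L^2$ variance inequality, a consequence of Bonami-Beckner hypercontractivity for functions on the Boolean cube: for $F$ a function of the $\pm 1$-valued weights,
\[
\mathbf{Var}(F)\le C\sum_e \frac{\|\partial_e F\|_2^2}{1+\log(\|\partial_e F\|_2/\|\partial_e F\|_1)}.
\]
The numerators are handled exactly as in Section 2: by Lemma \ref{linear}, $|\partial_e T(x,x+n\mathbf{e}_1)|\le \frac{b-a}{2}\mathbf{1}_{e\in G(x,x+n\mathbf{e}_1)}$, so summing over $e$ and invoking Kesten's Lemma \ref{lem: kesten-size} with translation invariance yields $\sum_e \|\partial_e F\|_2^2\le Cn$. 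Dropping the logarithm in the denominator would only recover Kesten's bound; the whole point of the averaging is to force a large logarithm.

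The third and hardest step is to establish the quantitative gain $\log(\|\partial_e F\|_2/\|\partial_e F\|_1)\gtrsim \log m$ for typical edges $e$. The role of averaging over $B_m$ is to make the discrete derivative $\partial_e F=\frac{1}{|B_m|}\sum_x\partial_e T(x,x+n\mathbf{e}_1)$ ``spiky'': since $e$ lies in $G(x,x+n\mathbf{e}_1)$ only for a small fraction of $x\in B_m$, the derivative is usually near zero relative to its $L^\infty$ bound, and Bonami-Beckner hypercontractivity (applied to the Fourier-Walsh expansion of $F$ restricted to characters containing $e$) converts this dilution into an $L^1$-$L^2$ gap of logarithmic size. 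Optimizing $m$ polynomial in $n$ so that $m^2\ll n/\log n$ while $\log|B_m|\asymp\log n$ (for instance $m\asymp\sqrt{n/\log n}$) then yields $\mathbf{Var}(T_n)\le Cn/\log n$. The main obstacle is the quantitative $L^1$-$L^2$ estimate here: hypercontractivity is naturally phrased as a bound on the high-frequency Fourier mass of $F$, and turning it into a usable influence-based variance bound is the technical heart of BKS. It is also precisely what restricts the argument to two-valued distributions and what motivates the refinements in \cite{BR, DHS13} needed to handle general $\mu$.
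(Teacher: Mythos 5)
Your architecture is the original Benjamini--Kalai--Schramm route: box-average the passage time, apply Talagrand's $L^1$--$L^2$ inequality \eqref{eqn: talvar}, and extract the logarithm from the smallness of the influences. This is genuinely different from the derivation given in this article, which (after the same averaging step) runs through the Falik--Samorodnitsky entropy inequality, Lemma~\ref{lem: FS}, combined with the discrete log-Sobolev inequality of Proposition~\ref{prop: logsobolev} and tensorization. The two routes buy different things: Talagrand's inequality is a black box specific to the uniform measure on the hypercube, so your argument is shorter but locked to two-valued weights, whereas the entropy route isolates exactly where the Bernoulli structure enters (the log-Sobolev step) and is what generalizes to arbitrary $\mu$ in Theorem~\ref{thm: DHS13}. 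Your first two steps are correct, and the choice $m\asymp\sqrt{n/\log n}$ is admissible (the error term $Cm^2$ is then of the target order; the paper takes $m=\lceil n^{1/4}\rceil$).

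The gap is in your third step. First, the assertion that $e$ lies in $G(x,x+n\mathbf{e}_1)$ for only a small fraction of $x\in B_m$ is not free: after translation invariance it is the statement $\mathbf{E}\,\#\bigl(G_n\cap(e+B_m)\bigr)\le C\,\mathrm{diam}(B_m)=Cm\ll \#B_m$, i.e.\ the one-dimensionality of geodesics, Lemma~\ref{lem: geo1d}, which in turn rests on Kesten's percolation estimate Lemma~\ref{lem: perc-lem}. This is a substantive ingredient of the proof and must be supplied, not assumed. Second, the mechanism you describe for converting this dilution into the logarithmic gain is off: there is no second application of hypercontractivity to the Fourier--Walsh expansion of $F$; hypercontractivity is already spent in proving \eqref{eqn: talvar}. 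What one actually has is the elementary chain $\|\partial_e F\|_1\le C m^{1-d}=:\rho$ (from the dilution) and $\|\partial_e F\|_2^2\le\|\partial_e F\|_\infty\|\partial_e F\|_1\le C\|\partial_e F\|_1$, and the claim ``$\log(\|\partial_e F\|_2/\|\partial_e F\|_1)\gtrsim\log m$ for typical edges'' is neither what is proved nor what is needed: for an edge with $\|\partial_e F\|_2\approx\|\partial_e F\|_1$ the logarithm gives nothing. The correct finish is a pointwise bound on each Talagrand summand: since $t\mapsto t^2/(1+\log(t/s))$ is increasing on $t\ge s$ and $\|\partial_e F\|_2\le (C\|\partial_e F\|_1)^{1/2}$, one gets
\begin{equation*}
\frac{\|\partial_e F\|_2^2}{1+\log\bigl(\|\partial_e F\|_2/\|\partial_e F\|_1\bigr)}\le \frac{C\|\partial_e F\|_1}{1+\tfrac12\log\bigl(C/\|\partial_e F\|_1\bigr)}\le \frac{C\|\partial_e F\|_1}{c\log n},
\end{equation*}
using $\|\partial_e F\|_1\le\rho$ in the denominator, and then $\sum_e\|\partial_e F\|_1\le Cn$ (Kesten's bound \eqref{eqn: Gn-lin}) closes the argument. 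As written, your step 3 names the right obstacle but does not carry out this computation, and the appeal to hypercontractivity in its place would not produce it.
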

Benjamini-Kalai-Schramm obtained this result by replacing the Efron-Stein inequality \ref{eqn: ES} by the following result of M. Talagrand \cite{talagrand-russo}:\index{Talagrand, Michel}%
\begin{theorem}\label{thm: talvar}
Consider a function $f:\{-1,1\}^n\rightarrow \mathbb{R}$.
Let $X=(X_1,\ldots,X_n)$ be uniformly distributed over the hypercube $\{-1,1\}^n$. 
Let 
\[X^i=(X_1,\ldots, -X_i, \ldots X_n).\]
Then, for some constant $C>0$:
\begin{equation}\label{eqn: talvar}
\mathbf{Var}(f) \le C\sum_{i=1}^n \frac{\mathbf{E}[f(X)-f(X^i)]^2}{1+\log \frac{\|f(X)-f(X^i)\|_2}{\|f(X)-f(X^i)\|_1}}.
\end{equation}
The $L^1$ and $L^2$ norms in \eqref{eqn: talvar} are taken with respect to the uniform measure on $\{-1,1\}^n$.
\end{theorem}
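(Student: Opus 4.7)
The plan is to prove this via discrete Fourier analysis on $\{-1,1\}^n$ combined with the Bonami-Beckner hypercontractivity inequality, which is the standard route to Talagrand-type refinements of the Poincar\'e/Efron-Stein bound on the hypercube.

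First I would set up the Walsh expansion: write $f = \sum_{S \subseteq [n]} \hat f(S)\,\chi_S$ with $\chi_S(x) = \prod_{i \in S} x_i$, so that Parseval gives $\mathbf{Var}(f) = \sum_{S \neq \varnothing} \hat f(S)^2$. The discrete derivative
\begin{equation*}
D_i f(X) := \tfrac{1}{2}\bigl(f(X) - f(X^i)\bigr) = \sum_{S \ni i} \hat f(S)\,\chi_S(X)
\end{equation*}
satisfies $\|f(X)-f(X^i)\|_2^2 = 4 \sum_{S \ni i} \hat f(S)^2$, and summing in $i$ yields $\sum_i \|D_i f\|_2^2 = \sum_S |S|\, \hat f(S)^2$. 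Discarding the factor $|S| \geq 1$ already recovers Efron-Stein on the cube; the task is to gain a logarithmic factor in the denominator.

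Next I would invoke Bonami-Beckner: for the noise operator $T_\rho g = \sum_S \rho^{|S|} \hat g(S)\chi_S$ and $\rho^2 = p-1$ with $p \in (1,2]$, one has $\|T_\rho g\|_2 \leq \|g\|_p$. Apply this to each $g = D_i f$, and interpolate $\|g\|_p$ logarithmically between $\|g\|_1$ and $\|g\|_2$ via log-convexity of $p\mapsto\|g\|_p$. Setting $r_i := \|f(X)-f(X^i)\|_2/\|f(X)-f(X^i)\|_1$ and choosing $\rho_i^2$ of order $1/(1+\log r_i)$, a careful bookkeeping of the weights $\rho_i^{2|S|}$ multiplying each Fourier mode of $D_i f$ should yield a per-coordinate estimate of the form
\begin{equation*}
\sum_{S \ni i} \hat f(S)^2 \,\leq\, C\,\frac{\mathbf{E}\bigl[f(X)-f(X^i)\bigr]^2}{1 + \log r_i}.
\end{equation*}
Summing in $i$ and using that every nonempty $S$ is captured by at least one $i \in S$ (absorbing the resulting $|S|$-overcounting into $C$) gives the stated bound.

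The main obstacle will be the optimization over $\rho$ combined with the re-summation in $i$: hypercontractivity only directly controls the low-frequency projection $T_\rho D_i f$, so to get a bound on the full derivative one must decompose $D_i f$ level-by-level and show that the high-degree contribution is suppressed fast enough by the weights $\rho_i^{2|S|}$ not to destroy the logarithmic gain. The cleanest way, closer to Talagrand's original argument, is to first prove a level-$k$ inequality of the form $\sum_{|S|=k,\,i \in S} \hat f(S)^2 \leq C^k \|D_i f\|_{p_k}^2$ for a suitable sequence $p_k \downarrow 1$, and then sum geometrically in $k$ so that the multiplicative constants are offset by the interpolation gain. A useful reduction before doing any of this is to normalize so that $\mathbf{E} f = 0$ and, by a rescaling argument, assume $\|D_i f\|_2 \leq 1$, which simplifies the choice of thresholds $\rho_i$.
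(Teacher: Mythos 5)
The paper does not actually prove Theorem \ref{thm: talvar}; it explicitly defers to \cite{talagrand-russo, BKS}. Your overall route --- Walsh expansion, Bonami--Beckner hypercontractivity applied to the discrete derivatives $D_i f$, and logarithmic interpolation of $\|D_if\|_{1+\rho^2}$ between the $L^1$ and $L^2$ norms --- is exactly the standard proof from those references, so the strategy is sound. However, as written your argument contains a step that is false. Your displayed per-coordinate estimate claims
\begin{equation*}
\sum_{S\ni i}\hat f(S)^2 \;\le\; C\,\frac{\mathbf{E}\bigl[f(X)-f(X^i)\bigr]^2}{1+\log r_i},
\end{equation*}
but the left-hand side equals $\tfrac14\,\mathbf{E}[f(X)-f(X^i)]^2$ identically (Parseval applied to $D_if=\sum_{S\ni i}\hat f(S)\chi_S$), so the inequality fails whenever $r_i$ is large and $D_if\not\equiv 0$. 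Relatedly, the final summation step cannot work as described: summing per-coordinate bounds over $i$ overcounts each $\hat f(S)^2$ exactly $|S|$ times, and $|S|$ is unbounded, so this factor cannot be ``absorbed into $C$.''

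Both problems disappear once the weight $1/|S|$ is put in the right place, and this weight is precisely what lets hypercontractivity bite. Start from the exact identity $\mathbf{Var}(f)=\sum_{S\ne\varnothing}\hat f(S)^2=\sum_{i}\sum_{S\ni i}|S|^{-1}\hat f(S)^2$ (no overcounting), and aim for
\begin{equation*}
\sum_{S\ni i}\frac{\hat f(S)^2}{|S|}\;\le\; C\,\frac{\|D_if\|_2^2}{1+\log r_i}.
\end{equation*}
To prove this, write $\frac{1}{2|S|}=\int_0^1\rho^{2|S|-1}\,\mathrm{d}\rho$, note that $\sum_{S\ni i}\rho^{2|S|}\hat f(S)^2=\|T_\rho D_if\|_2^2\le\|D_if\|_{1+\rho^2}^2\le\|D_if\|_2^2\,r_i^{-2(1-\rho^2)/(1+\rho^2)}$ by hypercontractivity and log-convexity of $p\mapsto\|g\|_p$, and then verify the elementary estimate $\int_0^1\rho^{-1}\min\bigl\{\rho^2,\;r^{-2(1-\rho^2)/(1+\rho^2)}\bigr\}\,\mathrm{d}\rho\le C/(1+\log r)$, where the $\rho^2$ term (available because $|S|\ge1$) controls the divergence at $\rho=0$. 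Your alternative level-$k$ truncation can also be made to work, but the high-degree remainder must then be bounded by $K^{-1}\sum_S|S|\hat f(S)^2$, which is again the $1/|S|$ bookkeeping you dropped. Finally, the proposed normalization $\|D_if\|_2\le1$ for every $i$ is neither achievable by a single global rescaling nor needed, since the ratio $r_i$ is scale-invariant.
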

Note that without the logarithm term in the denominator, \eqref{eqn: talvar} is the same as the right side of the of the Efron-Stein inequality \eqref{eqn: ES}. Indeed, if $g:\{-1,1\}\rightarrow \mathbb{R}$, and $x, x'$ are uniformly distributed on $\{-1,1\}$, then
\begin{equation}\label{eqn: discrete-deriv}
\mathbf{E}|g(x)-g(-x)|^2= \mathbf{E}|g(-1)-g(1)|^2=2\mathbf{E}|g(x)-g(x')|^2.
\end{equation}

We will not give the proof of Theorem \ref{thm: talvar}, referring instead to \cite{talagrand-russo, BKS}. The inequality is  specific to the uniform distribution on the hypercube. We derive a related inequality below when we discuss the proof of \eqref{eqn: DHS13} for general edge weights.

\subsection{Small influences} \index{influence}
\[\mathbf{E}\# G_n \le Cn.\]
In dimensions $d\ge 2$, this means geodesics are a ``thin'' set compared to the ambient space. Writing $\# G_n = \sum_e \mathbf{1}_{e\in G_n}$ and exchanging $\mathbf{E}$ and the sum over $e$, \eqref{eqn: Gn-lin} implies
\[\sum_{e\in \mathcal{E}(\mathbb{Z}^2)} \mathbf{P}(e\in G_n)\le Cn.\]
Because of \eqref{lem: kesten-size}, we expect about $n^d$ edges to contribute to the sum on the left. This suggests that, on average, $\mathbf{P}(e\in G_n)$ is of order $n^{1-d}$: each individual has only a small probability of having any effect on the passage time $T_n$. In this case, we say the corresponding variable $t_e$ has \emph{small influence} \cite{garban}.

\subsection{FPP on the torus} C. Garban and J. Steif introduced a model where it is easy to show that edges have small influences in a strong sense. Instead of $\mathbb{Z}^d$, consider the torus $\mathbb{T}^d_n=(\mathbb{Z}/n\mathbb{Z})^d$, and the passage time
\begin{equation}\label{eqn: torus}
T_{\mathbb{T}^d_n}=\inf_{\gamma \in C_m}\sum_{e\in \gamma}t_e,
\end{equation}
where $C_m$ is the set of closed lattice paths in $\mathbb{T}^d_n$ that have winding number one in the first coordinate direction.

The advantage of this model is that, because of the symmetries of the torus, the
probability that an edge $e\in \mathbb{T}^d_n$ is in the intersection $G_n(\mathbb{T}_n^d)$ of all minimizing paths in \eqref{eqn: torus} is independent of $e$:
\begin{equation}\label{eqn: eeprime}
\mathbf{P}(e\in G_n(\mathbb{T}_n^d))=\mathbf{P}(e'\in G_n(\mathbb{T}^d_n))
\end{equation}
for all $e,e'\in \mathbb{T}_n^d$. On the other hand, if the edge weight distribution $\mu$ has the form \eqref{eqn: mubernoulli}, then almost surely,
\[ \#G_n(\mathbb{T}_n^d) \le \frac{1}{a}T_{\mathbb{T}^d_n} \le \frac{b}{a}n.\]
Together with \eqref{eqn: eeprime}, this gives
\begin{align*}
n^d \mathbf{P}(e\in G_n(\mathbb{T}_n^d))  &= \sum_{e'\in \mathcal{E}(\mathbb{T}_n^d)}\mathbf{P}(e'\in G_n(\mathbb{T}_n^d)) \\
&= \mathbf{E}\#G_n(\mathbb{T}_n^d)\\
&\le \frac{b}{a}n.
\end{align*}
From this we obtain the influence bound:
\begin{equation}\label{eqn: lowinfluence}
\mathbf{P}(e\in G_n(\mathbb{T}_n^d))\le Cn^{-c},
\end{equation}
where $c>0$ if $d\ge 2$, uniformly in $e$.

Let us see how \eqref{eqn: lowinfluence} allows us to apply \eqref{thm: talvar} to prove the analog of the Benjamini-Kalai-Schramm result, Theorem \ref{thm: BKS} on $\mathbb{T}_n^d$. We take $f(X)=T_{\mathbb{T}_n^d(X)}$, where
\[t_{e_i} =  \frac{X_i-1}{2}a + \frac{X_i+1}{2}\cdot b, \]
for some enumeration $e_1,e_2,\ldots, e_{n^d}$ of the edges of $\mathbb{T}^d_n$.

To exploit the small influence bound, note that
\begin{equation} \label{eqn: diff}
|T_{\mathbb{T}_n^d}(X)-T_{\mathbb{T}_n^d}(X^i)|\le |T_{\mathbb{T}_n^d}(X)-T_{\mathbb{T}_n^d}(X^i)| \mathbf{1}_{e_i\in G_n(\mathbb{T}_n^d)},
\end{equation}
because the left side is only non-zero if the edge $e_i$ is in $G_n(\mathbb{T}_n^d)$. By Cauchy-Schwarz, \eqref{eqn: diff}, \eqref{eqn: lowinfluence}:
\begin{align*}
\|T_{\mathbb{T}_n^d}(X)-T_{\mathbb{T}_n^d}(X^i)\|_1 &\le \mathbf{P}(e\in G_n(\mathbb{T}_n^d))^{1/2}\|T_{\mathbb{T}_n^d}(X)-T_{\mathbb{T}_n^d}(X^i)\|_2\\
&\le Cn^{-c/2}\|T_{\mathbb{T}_n^d}(X)-T_{\mathbb{T}_n^d}(X^i)\|_2.
\end{align*}
Thus, for $i=1,\ldots,n^d$:
\[\frac{\|T_{\mathbb{T}_n^d}(X)-T_{\mathbb{T}_n^d}(X^i)\|_2}{\|T_{\mathbb{T}_n^d}(X)-T_{\mathbb{T}_n^d}(X^i)\|_1}\ge Cn^{c/2},\]
for some constant $C$.
Inserting this into \eqref{eqn: talvar} and using $\mathbf{E}\# G_n(\mathbb{T}_n^d)\le Cn$, we find
\begin{align*}
\mathbf{Var}(T_{\mathbb{T}^d_n})&\le \frac{C}{\log n}\sum_{e\in \mathcal{E}(\mathbb{T}_n^d)}\mathbf{P}(e\in G_n(\mathbb{T}_n^d))\\
&\le C\frac{n}{\log n}.
\end{align*}
which concludes the proof of sublinear variance on the torus.

\subsection{Averaging}
The difficulty in reproducing the argument in the previous section for the passage time $T_n$ in $\mathbb{Z}^d$ is that no inequality like \eqref{eqn: lowinfluence} can be true of edges that are close to the origin or $n\mathbf{e}_1$. For instance, if $\mathbf{e}_i=(0,\ldots, 1,\ldots,0)$, $i=1,\ldots, d$ are the standard basis vectors, then
\[1\le \sum_{i=1}^{d}( \mathbf{P}(\mathbf{e}_i \in G_n)+ \mathbf{P}(-\mathbf{e}_i \in G_n)),\]
so one of the $2d$ probabilities on the right is bounded below by $1/2d$.

Benjamini-Kalai-Schramm resolved this difficulty by considering an averaged version of the passage time for which the edge variables do have small influence. Here we will use a slightly different average. Let $T(x,y)$ be the passage time between $x$ and $y$:
\[T(x,y)=\inf_{\gamma: x\rightarrow y} \sum_{e\in \gamma}t_e.\]
We let 
\begin{equation}\label{eqn: mchoice}
m=\lceil n^{1/4}\rceil, 
\end{equation}
and define the averaged passage time
\begin{equation}
\label{eqn: Fn}
F_n = \frac{1}{\# B_m}\sum_{z\in B_m} T(z,z+n\mathbf{e}_1),
\end{equation}
where 
\[B_m=\{x\in \mathbb{Z}^d: \|x\|_1\le m\}.\]

The next approximation result shows that for the purposes of showing \eqref{eqn: DHS13}, we can consider $F_n$:
\begin{lemma}\label{lem: approx}
Suppose $\mathbf{E}t_e^2<\infty$. For $d\ge 2$, there is a constant such that for all $n\ge 1$,
\begin{equation}
|\mathbf{Var}(T_n)-\mathbf{Var}(F_n)|\le Cn^{3/4}.
\end{equation}
\end{lemma}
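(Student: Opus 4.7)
The plan is to reduce the lemma to two standard estimates via the factorization
\[\mathbf{Var}(T_n) - \mathbf{Var}(F_n) = \bigl(\sqrt{\mathbf{Var}(T_n)} + \sqrt{\mathbf{Var}(F_n)}\bigr)\bigl(\sqrt{\mathbf{Var}(T_n)} - \sqrt{\mathbf{Var}(F_n)}\bigr),\]
combined with the triangle inequality for the seminorm $\sqrt{\mathbf{Var}(\cdot)}$, which gives
\[\bigl|\sqrt{\mathbf{Var}(T_n)} - \sqrt{\mathbf{Var}(F_n)}\bigr| \le \sqrt{\mathbf{Var}(T_n - F_n)} \le \|T_n - F_n\|_{L^2}.\]
So the problem splits into bounding the two standard deviations and the $L^2$ distance $\|T_n - F_n\|_{L^2}$.

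First I would control the two standard deviations using Kesten's Theorem \ref{thm: var}. Directly, $\sqrt{\mathbf{Var}(T_n)} \le C\sqrt{n}$. For $F_n$, each translate $T(z, z+n\mathbf{e}_1)$ has the same distribution as $T_n$ by translation invariance of the i.i.d.\ edge weights, so Minkowski's inequality yields
\[\sqrt{\mathbf{Var}(F_n)} \le \frac{1}{\#B_m}\sum_{z \in B_m} \sqrt{\mathbf{Var}(T(z, z+n\mathbf{e}_1))} \le C\sqrt{n}.\]

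Next I would bound $\|T_n - F_n\|_{L^2} \le Cm = Cn^{1/4}$. For each $z \in B_m$, subadditivity of the passage time gives
\[|T(0, n\mathbf{e}_1) - T(z, z+n\mathbf{e}_1)| \le T(0, z) + T(n\mathbf{e}_1, z+n\mathbf{e}_1).\]
Since $T(0,z)$ is dominated by the sum of weights along the straight coordinate path of length $\|z\|_1 \le m$, Minkowski in $L^2$ gives $\|T(0,z)\|_{L^2} \le \|z\|_1\, \|t_e\|_{L^2} \le Cm$ under the assumption $\mathbf{E} t_e^2 < \infty$, and similarly for the other term. A second application of Minkowski produces
\[\|T_n - F_n\|_{L^2} \le \frac{1}{\#B_m}\sum_{z \in B_m}\bigl(\|T(0, z)\|_{L^2} + \|T(n\mathbf{e}_1, z+n\mathbf{e}_1)\|_{L^2}\bigr) \le Cm.\]
Multiplying by $\sqrt{\mathbf{Var}(T_n)} + \sqrt{\mathbf{Var}(F_n)} \le 2C\sqrt{n}$ yields $|\mathbf{Var}(T_n) - \mathbf{Var}(F_n)| \le Cn^{3/4}$.

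The proof is essentially a clean application of standard tools — the variance difference factorization, subadditivity, Kesten's linear variance bound, and the trivial straight-path second-moment estimate — so I do not expect any serious obstacle. The only delicate point is the calibration of $m$: the choice $m = \lceil n^{1/4}\rceil$ in \eqref{eqn: mchoice} is exactly what keeps the approximation error $\sqrt{n}\cdot m$ of order $n^{3/4}$, safely below the BKS target $n/\log n$, while simultaneously being large enough to let the averaging drive edge influences to zero in the subsequent application of Talagrand's inequality.
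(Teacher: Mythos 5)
Your proposal is correct and follows essentially the same route as the paper: the paper's first display is exactly your factorization $|\mathbf{Var}(T_n)-\mathbf{Var}(F_n)|\le \|T_n-F_n\|_2(\mathbf{Var}^{1/2}(T_n)+\mathbf{Var}^{1/2}(F_n))$, and it likewise bounds $\|T_n-F_n\|_2\le Cm=Cn^{1/4}$ via subadditivity plus the straight-path estimate $\|T(0,z)\|_2\le C\|z\|_1$, and the variances by $Cn$. Your handling of $\mathbf{Var}(F_n)$ by Minkowski for the standard-deviation seminorm is a perfectly fine substitute for the paper's appeal to the Kesten-type bound \eqref{eqn: varbound}.
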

\begin{proof}
First, write
\[|\mathbf{Var}T(0,n\mathbf{e}_1)-\mathbf{Var}F_n|\le \|{T}(0,n\mathbf{e}_1)-F_n\|_2(\mathbf{Var}^{1/2}(T_n)+\mathbf{Var}^{1/2}(F_n)).\]
By subadditivity,
\[|T(0,n\mathbf{e}_1)-T(z,z+n\mathbf{e}_1)|\le T(0,z)+T(n\mathbf{e}_1,z+n\mathbf{e}_1), \]
so from \eqref{eqn: Fn} and Minkowski's inequality,
\[\|F_n-T(0,n\mathbf{e}_1)\|_2\le \frac{2}{\#B_m}\sum_{\|z\|_1\le m} \|T(0,z)\|_2 \le 2\max_{\|z\|_1\le m}\|T(0,z)\|_2.\]
Since $\mathbf{E}t_e^2<\infty$ and the passage time from the origin to $z$ can be dominated by a sum of $O(\|z\|_1)$ i.i.d. $\mu$-distributed variables, we have
\[\|T(0,z)\|_2\le C\|z\|_1,\]
for some constant $C>0$, so using \eqref{eqn: varbound} on $\mathbf{Var}T_n$, $\mathbf{Var}F_n$, we have
\[|\mathbf{Var}T_n-\mathbf{Var}F_n|\le C\|F_n-T(0,x)\|_2\cdot n^{1/2}\le Cn^{3/4}, \]
by our choice of $m$ \eqref{eqn: mchoice}.
\end{proof}

\subsection{Entropy}\index{entropy}%
To control the averaged passage time \eqref{eqn: Fn}, we will use a substitute for Theorem \ref{thm: talvar}, due to F. Falik and G. Samorodnitsky \cite{FS}, based on an entropy inequality.

Let $X\in L^1(\mu)$ be a nonnegative random variable. The entropy of $X$ relative to $\mu$ is defined by
\begin{equation}
Ent_\mu X = \mathbf{E} \Bigl[X\log \frac{X}{\mathbf{E}X}\Bigr].
\end{equation}
By Jensen's inequality, $Ent_\mu X\ge 0$ for any $X$.
 
 \index{Falik-Samorodnitsky inequality}%
 \index{martingale increments}%
\begin{lemma}[Falik, Samorodnitsky]\label{lem: FS}
Let $(\Omega, \mathbf{P})$ be a probability space and $f \in L^2(\Omega,\mathbf{P})$ be a function of independent random variables $X_1,X_2,\ldots$ . Let 
\[\mathcal{F}_0=\varnothing, \mathcal{F}_i = \sigma(X_1,\ldots, X_i),\] 
and 
\begin{equation}
\label{eqn: Di}
\Delta_i f = \mathbf{E}[f\mid \mathcal{F}_i]-\mathbf{E}[f\mid \mathcal{F}_{i-1}]
\end{equation}
denote the martingale increments. We have the inequality
\begin{equation}\label{eqn: FS}
\mathbf{Var}(f)\cdot\log \frac{\mathbf{Var}(f)}{\sum_{i=1}^\infty (\mathbf{E}|\Delta_i f|)^2 } \le \sum_{i=1}^\infty  Ent_\mu (\Delta_i f)^2
\end{equation}
\end{lemma}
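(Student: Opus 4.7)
The plan is to reduce the statement to two elementary ingredients: a per-coordinate entropy bound in which the second moment appears in the numerator and the first absolute moment in the denominator, plus the log-sum inequality that lets us combine the per-coordinate bounds once we recognize $\mathbf{Var}(f) = \sum_i \mathbf{E}(\Delta_i f)^2$ from orthogonality of martingale increments (exactly as used in the proof of Lemma \ref{lem: ES}).

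The single-coordinate claim is: for any real-valued $Z \in L^2(\mathbf{P})$ with $\mathbf{E}|Z|>0$,
\begin{equation*}
Ent_\mu(Z^2) \;\ge\; \mathbf{E}[Z^2]\,\log \frac{\mathbf{E}[Z^2]}{(\mathbf{E}|Z|)^2}.
\end{equation*}
To establish this I would replace $Z$ by $|Z|$ (which leaves $Z^2$ and its entropy unchanged) and introduce the probability measure $d\nu = (|Z|/\mathbf{E}|Z|)\,d\mu$, assuming $\mathbf{E}|Z|>0$ (the degenerate case is trivial). Jensen's inequality applied to the convex function $\varphi(x) = x\log x$ under $\nu$ then reads
\begin{equation*}
\mathbf{E}_\nu\bigl[|Z|\log |Z|\bigr] \;\ge\; \mathbf{E}_\nu[|Z|]\,\log\mathbf{E}_\nu[|Z|],
\end{equation*}
which, after unfolding the definition of $\nu$, becomes $\mathbf{E}[Z^2\log |Z|] \ge \mathbf{E}[Z^2]\log(\mathbf{E}[Z^2]/\mathbf{E}|Z|)$. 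Doubling and using $\log Z^2 = 2\log|Z|$ in the definition $Ent_\mu(Z^2) = \mathbf{E}[Z^2\log Z^2] - \mathbf{E}[Z^2]\log \mathbf{E}[Z^2]$ gives the displayed per-coordinate bound.

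Applying this bound with $Z = \Delta_i f$ and summing in $i$, I would then invoke the log-sum inequality
\begin{equation*}
\sum_i a_i \log\frac{a_i}{b_i} \;\ge\; \Bigl(\sum_i a_i\Bigr)\log\frac{\sum_i a_i}{\sum_i b_i}
\end{equation*}
with $a_i = \mathbf{E}(\Delta_i f)^2$ and $b_i = (\mathbf{E}|\Delta_i f|)^2$. Since $\sum_i a_i = \mathbf{Var}(f)$ by martingale orthogonality (the $\Delta_i f$ are orthogonal in $L^2$ and $\mathbf{E}[f\mid\mathcal{F}_n]\to f$ in $L^2$), the right side of the log-sum inequality is exactly $\mathbf{Var}(f)\log(\mathbf{Var}(f)/\sum_i(\mathbf{E}|\Delta_i f|)^2)$, which is what we need.

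The routine parts are the Jensen step and the log-sum inequality (the latter itself being a consequence of convexity of $u\log u$ or, equivalently, of nonnegativity of relative entropy on $\ell^1$). The one genuinely delicate point is handling the infinite sum: one needs $\mathbf{Var}(f) > \sum_i (\mathbf{E}|\Delta_i f|)^2$ for the log on the left to even make sense (by Cauchy--Schwarz this quantity is $\le \sum_i\mathbf{E}(\Delta_i f)^2 = \mathbf{Var}(f)$, with equality only in degenerate cases), and to pass to the limit $n\to\infty$ in the finite-$n$ version one truncates, applies the $L^2$ martingale convergence theorem, and uses monotone convergence for the entropy terms (which are each nonnegative). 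I expect this truncation/justification step to be the main bookkeeping obstacle, while the analytic core is the single-variable Jensen computation above.
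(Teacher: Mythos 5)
Your proposal is correct and follows essentially the same route as the paper: the per-coordinate bound $Ent_\mu(Z^2)\ge \mathbf{E}[Z^2]\log\bigl(\mathbf{E}[Z^2]/(\mathbf{E}|Z|)^2\bigr)$ is exactly the paper's Proposition \ref{prop: lb}, and your log-sum inequality with $a_i=\mathbf{E}(\Delta_i f)^2$, $b_i=(\mathbf{E}|\Delta_i f|)^2$ is precisely the paper's application of Jensen's inequality to $-\log$ with weights $\mathbf{E}(\Delta_i f)^2/\mathbf{Var}(f)$, combined with martingale orthogonality. The only cosmetic difference is that you prove the per-coordinate bound via a tilted measure and Jensen for $x\log x$, while the paper normalizes $\mathbf{E}Z^2=1$ and uses the elementary inequality $1-x\le\log x^{-1}$.
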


Before giving the proof of Lemma \ref{lem: FS}, we need one more auxiliary result:
\begin{proposition}[Falik-Samorodnitsky]\label{prop: lb}
If $X \geq 0$ almost surely,
\begin{equation}\label{eqn: entlow}
Ent_\mu (X^2) \geq \mathbf{E}_\mu X^2 \log \frac{\mathbf{E}_\mu X^2}{(\mathbf{E}_\mu X)^2}\ .
\end{equation}
\end{proposition}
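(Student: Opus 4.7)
The plan is to reduce the claim to the log-sum inequality (equivalently, nonnegativity of a Kullback--Leibler divergence) applied to the pair $a = X^2$, $b = X$. Using $\log X^2 = 2\log X$,
\[Ent_\mu(X^2) = 2\mathbf{E}_\mu[X^2\log X] - \mathbf{E}_\mu X^2 \cdot \log \mathbf{E}_\mu X^2,\]
while the claimed lower bound equals $\mathbf{E}_\mu X^2\log \mathbf{E}_\mu X^2 - 2\mathbf{E}_\mu X^2\log \mathbf{E}_\mu X$. Subtracting and dividing by $2$, the proposition is equivalent to
\[\mathbf{E}_\mu[X^2\log X] \ \ge \ \mathbf{E}_\mu X^2 \cdot \log\frac{\mathbf{E}_\mu X^2}{\mathbf{E}_\mu X}.\]

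This is exactly the log-sum inequality $\mathbf{E}_\mu[a\log(a/b)] \ge \mathbf{E}_\mu a \cdot \log(\mathbf{E}_\mu a/\mathbf{E}_\mu b)$ with $a=X^2$, $b=X$, once one observes that $\log(X^2/X)=\log X$ on $\{X>0\}$ and that both sides vanish on $\{X=0\}$ under the usual conventions. The log-sum inequality itself follows in one line from Jensen's inequality applied to the convex function $u\mapsto u\log u$ under the tilted probability measure $d\nu := (X/\mathbf{E}_\mu X)\,d\mu$, evaluated at $u = X$:
\[\frac{\mathbf{E}_\mu[X^2\log X]}{\mathbf{E}_\mu X} = \mathbf{E}_\nu[X\log X] \ \ge\ (\mathbf{E}_\nu X)\log(\mathbf{E}_\nu X) = \frac{\mathbf{E}_\mu X^2}{\mathbf{E}_\mu X}\log\frac{\mathbf{E}_\mu X^2}{\mathbf{E}_\mu X}.\]
Multiplying through by $\mathbf{E}_\mu X$ gives the reduced inequality.

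An equivalent, more conceptual route worth noting: introduce the cumulant generating function $\phi(s) := \log \mathbf{E}_\mu X^s$. By H\"older's inequality $\phi$ is convex on $[1,2]$, with $\phi'(2) = \mathbf{E}_\mu[X^2\log X]/\mathbf{E}_\mu X^2$; the proposition then collapses to the tangent-line inequality $\phi'(2)\ge \phi(2)-\phi(1)$ for the convex function $\phi$ at $s=2$. No substantive obstacle is anticipated. The only mildly delicate points are the degenerate case $\mathbf{E}_\mu X = 0$ (in which $X = 0$ a.s. and both sides vanish) and the integrability of $X^2|\log X|$ near zero, both handled by the conventions $0\log 0 = 0$ together with a truncation $X\mapsto X\vee\e$ and passage $\e\downarrow 0$ via monotone/dominated convergence.
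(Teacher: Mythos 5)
Your argument is correct, and the reduction is clean: the identity $Ent_\mu(X^2) - \mathbf{E}_\mu X^2\log\frac{\mathbf{E}_\mu X^2}{(\mathbf{E}_\mu X)^2} = 2\bigl(\mathbf{E}_\mu[X^2\log X] - \mathbf{E}_\mu X^2\log\frac{\mathbf{E}_\mu X^2}{\mathbf{E}_\mu X}\bigr)$ checks out, the log-sum inequality with $a=X^2$, $b=X$ is exactly what is needed, and your Jensen step under the tilted measure $d\nu = (X/\mathbf{E}_\mu X)\,d\mu$ is valid (the degenerate case $\mathbf{E}_\mu X=0$ and the integrability of $X^2|\log X|$ near $0$ are correctly dispatched). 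The route is genuinely different from the paper's: there, one normalizes $\mathbf{E}_\mu X^2=1$, rewrites the claim as $0\le \mathbf{E}_\mu[X^2\log(X\,\mathbf{E}_\mu X)]$, and concludes by integrating the pointwise bound $1-x\le \log x^{-1}$ (at $x=(X\,\mathbf{E}_\mu X)^{-1}$) against $X^2\,d\mu$, using $\mathbf{E}_\mu[X^2\cdot(X\,\mathbf{E}_\mu X)^{-1}]=1$ — no Jensen and no change of measure. The two proofs are of comparable length; the paper's is more self-contained (one elementary inequality, no auxiliary lemma), while yours situates the statement inside a standard framework — it identifies the proposition as an instance of the log-sum inequality, equivalently the tangent-line inequality $\phi'(2)\ge\phi(2)-\phi(1)$ for the convex function $\phi(s)=\log\mathbf{E}_\mu X^s$, which makes the mechanism (convexity of the moment generating function in the exponent) transparent and suggests the natural generalizations to other pairs of exponents. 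Either version is acceptable here.
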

\begin{proof}
Since both sides are homogeneous under scaling, we assume $\mathbf{E}f^2=1$, so the inequality to prove is
\[-\log(\mathbf{E}f)^2 \le \mathbf{E}f^2\log f^2,\]
or, as $f\ge 0$,
\[0\le \mathbf{E}f^2\log(f\mathbf{E}f).\]
Next, use the elementary inequality
\[1-x\le \log x^{-1}\]
with $x=(f\mathbf{E}f)^{-1}$ to find
\[0=\mathbf{E}f^2-1 = \mathbf{E}\left[f^2(1-1/(f\mathbf{E}f)); f>0\right]\le \mathbf{E}f^2\log(f\mathbf{E}f).\qedhere\]\end{proof}

We can now give the proof of \eqref{eqn: FS}:
\begin{proof}[Proof of Lemma \ref{lem: FS}]
We apply \eqref{eqn: entlow} to each of the $\Delta_i f$:
\begin{align*}
\sum_{i=1}^\infty Ent_\mu (\Delta_i f)^2&\ge \sum_{i=1}^\infty \mathbf{E}[(\Delta_if)^2]\log \frac{\mathbf{E}[(\Delta_i f)^2]}{(\mathbf{E}|\Delta_i f|)^2}\\ 
&= -\mathbf{Var}(f) \sum_{i=1}^\infty \frac{\mathbf{E}(\Delta_i f)^2}{\mathbf{Var}(f)}\log \frac{(\mathbf{E}|\Delta_i f|)^2}{\mathbf{E}[(\Delta_i f)^2]}\,.
\end{align*}
Since $f\in L^2$, $\mathbf{Var}(f) =\sum_{i=1}^\infty \mathbf{E}[(\Delta_i f)^2]$, we apply Jensen's inequality to the function $-\log x$ to obtain the lower bound
\[-\mathbf{Var}(f) \log \sum_{i=1}^\infty \frac{\mathbf{E}[(\Delta_i f)^2]}{\mathbf{Var} (f)}\frac{(\mathbf{E}|\Delta_i f|)^2}{\mathbf{E}[(\Delta_i f)^2]}.\qedhere\]
\end{proof}

\subsection{BKS in $\mathbb{Z}^d$}
Let us see how \eqref{eqn: FS} applied to $f=F_n$ is used to obtain the Benjamini-Kalai-Schramm result in $\mathbb{Z}^d$, and then extended to more general edge weights. The sigma algebra $\mathcal{F}_i$ is $\sigma(t_{e_1},\ldots, t_{e_i})$ for some enumeration of the edges of $\mathbb{Z}^d$. The proof has two essentially distinct parts
\begin{enumerate}
\item Showing that the factor 
\[\log \frac{\mathbf{Var}(f)}{\sum_{i=1}^\infty (\mathbf{E}|\Delta_i f|)^2 }\]
on the left of \eqref{eqn: FS} gives a logarithmic improvement. Here we use a more refined version of our ``Ingredient 2'', the one-dimensionality of geodesics, in the form of Lemma \ref{lem: geo1d}.
\item Proving the bound 
\begin{equation}\label{eqn: entropybd}
\sum_{i=1}^\infty  Ent_\mu |\Delta_i F_n|^2\le Cn.
\end{equation}
\index{logarithmic Sobolev inequality}%
For Bernoulli edge weights, or more generally when a \emph{logarithmic Sobolev inequality} is available, this can be simply estimated by $\sum_{e\in\mathcal{E}(\mathbb{Z}^d)} \mathbf{P}(e\in G_n)$. In the case of general edge weights $\mu$, we represent $\mu$-distributed random variables as suitable images of Bernoulli random variables, which leads to a more complicated calculation presented in the next part of this article.
\end{enumerate}

To see 1. above, note that we may assume 
\begin{equation}
\label{eqn: 78}
\mathbf{Var}(f) \ge n^{7/8},
\end{equation}
otherwise there is nothing left to prove.

Then, write as in the proof of Kesten's linear variance bound 
\begin{equation}\label{eqn: 1dcalc-1}
\begin{split}
\mathbf{E}|\Delta_i F_n| &\le \frac{2}{\#B_m}\sum_{z\in B_m}\mathbf{E}|\Delta_i T(z,z+n\mathbf{e}_1)|\\
&\le \frac{C}{\# B_m}\sum_{z\in B_m}\mathbf{P}(e_i\in G_n(z,z+n\mathbf{e}_1)),
\end{split}
\end{equation}
where $G_n(x,y)$ is the intersection of all geodesics from $x$ to $y$.
By translation invariance, the sum in the last step is
\begin{equation}\label{eqn: 1dcalc-2}
\begin{split}
\sum_{z\in B_m}\mathbf{P}(e_i\in G_n(z,z+n\mathbf{e}_1)) &= \sum_{z\in B_m}\mathbf{P}(e_i+z\in G_n(0,n\mathbf{e}_1))\\
&= \mathbf{E} \#(G_n(0,n\mathbf{e}_1)\cap \{e_i+ B_m\}).
\end{split}
\end{equation}
Since any geodesic is typically a ``1-dimensional'' set, we would expect the last quantity to be of order $\mathrm{diam} B_m$, which is the content of the following 
\begin{lemma}\label{lem: geo1d} Let $\mathcal{G}$ be the set of all finite self-avoiding geodesics from $0$ to $n\mathbf{e}_1$. For any finite set $E$,
\[\mathbf{E}\max_{\gamma \in \mathcal{G}}\#(E\cap \gamma) \le C\mathrm{diam}E.\]
\end{lemma}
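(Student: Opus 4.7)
The plan is to adapt Kesten's two-ingredient strategy (Lemmas \ref{lem: perc-lem} and \ref{lem: kesten-size}) to the restricted setting of a finite set $E$: reduce $M=\max_{\gamma\in\mathcal{G}}\#(E\cap\gamma)$ to the length of a single sub-geodesic whose endpoints lie in $E$, invoke the percolation lemma to convert that edge-count into a passage-time lower bound, and then control the passage time from above by a short deterministic lattice path of length at most $\mathrm{diam}(E)$.

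Set $D=\mathrm{diam}(E)$ and fix $x_0\in E$ so that $E\subset B(x_0,D)$ with $|B(x_0,D)|\le CD^d$. For each realization of the weights, pick $\gamma^*\in\mathcal{G}$ achieving $M$, and let $v_1,v_M\in E$ be the first and last points of $E$ visited by $\gamma^*$. The subpath $\sigma=\gamma^*|_{v_1\to v_M}$ is itself a geodesic from $v_1$ to $v_M$ and contains at least $M-1$ edges. By Lemma \ref{lem: perc-lem}, together with a union bound over the $\le CD^d$ candidate starting points in $B(x_0,D)$, there is an event of probability at most $CD^d e^{-cn}$ off which every self-avoiding path originating at some vertex of $B(x_0,D)$ with at least $n$ edges has passage time at least $an$. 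Taking $n=KD$ with $K$ large enough makes this exceptional probability $\le e^{-c'D}$, and on its complement
\[
M-1\le \#\sigma \le T(\sigma)/a = T(v_1,v_M)/a.
\]

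To close the loop, bound $T(v_1,v_M)\le\sum_{e\in\pi(v_1,v_M)}t_e$ for a fixed monotone coordinate path $\pi(v_1,v_M)$ from $v_1$ to $v_M$ of length $\|v_1-v_M\|_1\le D$, and take expectations on the good event. The contribution of the bad event is absorbed via Cauchy--Schwarz using $M\le\#\gamma^*$, the $L^2$ bound $\|\#\gamma^*\|_2\le Cn$ from Lemma \ref{lem: kesten-size}, and the exponential smallness $\P(\mathrm{bad})^{1/2}\le e^{-c'D/2}$, where $K$ is chosen large enough to dominate the polynomial-in-$n$ prefactor.

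The substantive obstacle is the random dependence of the pair $(v_1,v_M)$, and hence of the path $\pi(v_1,v_M)$, on the weights. For each fixed pair $(v,w)\in E\times E$ the bound $\mathbf{E}\sum_{e\in\pi(v,w)}t_e\le CD$ is immediate from independence and $\mathbf{E}t_e<\infty$, so the question reduces to controlling the expected maximum of $O(D^{2d})$ correlated path-sums of length at most $D$. I expect to handle this via a second application of the percolation lemma to rule out atypically long paths between points of $E$, producing a uniform deterministic-in-$(v,w)$ upper bound of order $D$. This is where the bulk of the technical work lies; the rest is a direct repackaging of the ingredients already used in the proof of Theorem \ref{thm: var}.
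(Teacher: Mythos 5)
Your overall architecture coincides with the paper's: extract from the maximizing geodesic the subpath $\sigma$ between its first and last visits to $E$, observe that $\sigma$ is itself a geodesic with at least $M$ edges and endpoints among the $O(\mathrm{diam}(E)^d)$ vertices touching $E$, use Kesten's Lemma \ref{lem: perc-lem} plus a union bound to convert the edge count of $\sigma$ into the passage time $T(v_1,v_M)$, and reduce to bounding $\mathbf{E}\max_{v,w}T(v,w)$ by $C\,\mathrm{diam}(E)$. One small repair first: working on the complement of the bad event at the single level $n=KD$ only gives $T(\sigma)\ge aKD$ when $\#\sigma\ge KD$; it does not give $\#\sigma\le T(\sigma)/a$. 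You must intersect the good events over all levels $m\ge KD$ (the probabilities $CD^de^{-cm}$ are summable, so this costs nothing; the paper handles it by summing its tail estimate over $\lambda\ge\mathrm{diam}(E)$ rather than fixing one threshold).

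The genuine gap is the step you defer to the end: $\mathbf{E}\max_{v,w}T(v,w)\le C\,\mathrm{diam}(E)$. A ``second application of the percolation lemma'' cannot close it, because Lemma \ref{lem: perc-lem} bounds passage times from \emph{below} by edge counts; the danger here is the opposite one, namely short paths carrying atypically heavy weights, about which that lemma says nothing. Nor does one deterministic path per pair suffice: under $\mathbf{E}t_e^2<\infty$, Chebyshev along a single path of length $O(D)$ gives only $\mathbf{P}(T(v,w)\ge\lambda)\le CD/(\lambda-CD)^2$, and summing this tail over $\lambda\ge MD$ and union-bounding over the $O(D^{2d})$ pairs contributes order $D^{2d-1}$ to the expected maximum, which is useless for $d\ge2$. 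The missing idea is the one in the paper's Lemma \ref{lem: max_lemma}: route each pair through $2d$ edge-disjoint deterministic paths, so that $T(v,w)$ is dominated by the minimum of $2d$ independent sums and the tail improves to $\bigl[CD/(\lambda-CD)^2\bigr]^{2d}\sim D^{2d}\lambda^{-4d}$, which does beat the union bound (after first using subadditivity $T(v,w)\le T(v,x_0)+T(x_0,w)$ to reduce to $O(D^d)$ single points). Your Cauchy--Schwarz treatment of the bad event via Lemma \ref{lem: kesten-size} is fine, but without this disjoint-paths tail estimate the main term is not controlled.
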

We prove Lemma \ref{lem: geo1d} at the end of this section. Assuming the lemma for now, an argument similar to \eqref{eqn: varbound}, but assuming only $\mathbf{E}t_e<\infty$, gives
\begin{equation}\label{eqn: onedelta}
\mathbf{E}|\Delta_i F_n| \le \frac{C}{\# B_m}\mathrm{diam}B_m\le Cn^{(1-d)/4}.
\end{equation}
Using \eqref{eqn: onedelta}, we can bound the sum appearing on the left side in \eqref{eqn: FS}:
\begin{equation}\label{eqn: 54}
\sum_i (\mathbf{E}|\Delta_i F_n|)^2 \le Cn^{(1-d)/4}\sum_i\frac{1}{\# B_m}\sum_{z\in B_m}\mathbf{P}(e_i\in G_n(z,z+n\mathbf{e}_1))\le Cn^{(5-d)/4}.
\end{equation}
Here we have used translation invariance and $\sum_i \mathbf{P}(e_i\in G_n)=\mathbf{E}[\#G_n]$.
From \eqref{eqn: 54}, \eqref{eqn: 78}, we now have, for $d\ge 2$,
\[\mathbf{Var}(F_n) \log n \le C\sum_{i=1}^\infty  Ent_\mu |\Delta_i F_n|^2.\]
Theorem \ref{thm: DHS13} would now be proved if we had the estimate \eqref{eqn: entropybd} for general $\mu$.

\subsection{Log-Sobolev}\index{Bonami-Gross inequality}%
When $\mu$ is of the form \eqref{eqn: mubernoulli}, the left side of \eqref{eqn: entropybd} can be estimated by the right side of \eqref{eqn: ES}. This is due to the following discrete \emph{logarithmic Sobolev inequality}, due to Bonami \cite{bonami} and Gross \cite{gross}:
\begin{proposition}\label{prop: logsobolev}
Let $f:\{0,1\}\rightarrow \mathbb{R}$ and $\mu = \frac{1}{2}(\delta_{0}+\delta_1)$. Then
\begin{equation}\label{eqn: logsobolev}
Ent_\mu f^2 \le (1/2)|f(0)-f(1)|^2.
\end{equation}
\end{proposition}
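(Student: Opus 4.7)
The plan is to reduce the two-point inequality to a one-variable calculus problem by using symmetry and homogeneity, and then dispatch the resulting bound with a short convexity argument. First I would remove the sign ambiguity: since $Ent_\mu f^2$ depends only on $|f|$, while $|f(0)-f(1)| \ge \bigl||f(0)|-|f(1)|\bigr|$, it suffices to prove the inequality for $f \ge 0$. If $f(0)=f(1)$ both sides vanish, so assume $f \ge 0$ is nonconstant. Both sides are homogeneous of degree two in $(f(0),f(1))$, so I may rescale to have $f(0)^2+f(1)^2=2$, i.e.\ $\mathbf{E}_\mu f^2 = 1$.

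Under this normalization, parametrize $f(0)^2 = 1+t$ and $f(1)^2 = 1-t$ with $t \in [-1,1]$. A direct computation yields
\[
Ent_\mu f^2 = \tfrac{1}{2}\bigl[(1+t)\log(1+t)+(1-t)\log(1-t)\bigr]
\]
and
\[
\tfrac{1}{2}(f(0)-f(1))^2 = \tfrac{1}{2}\bigl[(1+t)+(1-t)-2\sqrt{1-t^2}\bigr] = 1-\sqrt{1-t^2}.
\]
The inequality therefore reduces to $\psi(t) \ge 0$ on $[-1,1]$, where
\[
\psi(t) = 1-\sqrt{1-t^2} - \tfrac{1}{2}\bigl[(1+t)\log(1+t)+(1-t)\log(1-t)\bigr].
\]

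To finish, I would verify $\psi(0)=0$, $\psi'(0)=0$, and convexity of $\psi$. Differentiating gives
\[
\psi'(t) = \frac{t}{\sqrt{1-t^2}} - \tfrac{1}{2}\log\frac{1+t}{1-t},
\]
which vanishes at $t=0$, and a further differentiation together with some simplification yields
\[
\psi''(t) = \frac{1}{(1-t^2)^{3/2}} - \frac{1}{1-t^2} = \frac{1}{1-t^2}\left(\frac{1}{\sqrt{1-t^2}}-1\right) \ge 0
\]
on $(-1,1)$. Convexity combined with $\psi(0) = \psi'(0) = 0$ gives $\psi(t) \ge 0$ throughout, completing the argument.

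The only subtle point is the choice of parametrization. With a less symmetric reduction — for instance, scaling to $f(1)=1$ and writing everything in terms of the single variable $u = f(0)$ — the analogous one-variable function has both its first and second derivatives vanishing at the critical point $u=1$, forcing a higher-order Taylor analysis. The symmetric normalization $\mathbf{E}_\mu f^2 = 1$ is what makes $\psi''$ strictly positive off zero and keeps the calculus transparent; I expect that identifying this parametrization is the main (and really only) nontrivial step.
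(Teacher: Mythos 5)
Your proof is correct. Note that the paper itself does not prove Proposition \ref{prop: logsobolev}; it only refers the reader to \cite{BLM}. Your argument is a self-contained version of that standard calculus proof, and the two preliminary reductions (to $f\ge 0$ via $\bigl||f(0)|-|f(1)|\bigr|\le|f(0)-f(1)|$, and to $\mathbf{E}_\mu f^2=1$ via degree-two homogeneity) are exactly the right ones. The one genuine improvement over the usual presentation is the symmetric parametrization $f(0)^2=1+t$, $f(1)^2=1-t$: as you observe, the asymmetric normalization used in \cite{BLM} (fix $f(1)$ and vary $f(0)$) produces a critical point where the second derivative also vanishes and requires a longer monotonicity analysis, whereas your $\psi$ is manifestly convex on $(-1,1)$ with $\psi(0)=\psi'(0)=0$, which immediately gives $\psi\ge 0$ from the supporting-line inequality $\psi(t)\ge\psi(0)+\psi'(0)t$. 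The computations of $Ent_\mu f^2$, of $\tfrac12(f(0)-f(1))^2$, and of $\psi'$ and $\psi''$ all check out. The only point worth a sentence in a written-up version is the boundary: at $t=\pm1$ one of the values of $f$ vanishes, so you should either adopt the convention $0\log 0=0$ and invoke continuity of $\psi$ on the closed interval, or dispose of the case $f(0)f(1)=0$ directly (there the inequality reads $a^2\log 2\le a^2/2$... no, rather $\tfrac12 a^2\log 2\le\tfrac12 a^2$, which holds since $\log 2<1$). This is cosmetic and does not affect the validity of the argument.
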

The proof of this result is elementary and can be found in \cite{BLM}.

At this point we will need the following``tensorization property'' of entropy. 
\begin{proposition}\label{prop: tensorization}
Let $f$ be a non-negative random variable on a product probability space
\[\left(\prod_{i=1}^\infty \Omega_i,\mathcal{F}, \mu = \prod_{i=1}^\infty \mu_i \right),\]
where $\mathcal{F} = \bigvee_{i=1}^\infty \mathcal{G}_i$ and each triple $(\Omega_i,\mathcal{G}_i,\mu_i)$ is a probability space. Then
\begin{equation}
Ent X \leq \sum_{i=1}^\infty \mathbf{E} Ent_{i}X\ ,\label{eqn: tensor}
\end{equation}
where $Ent_i f$ is the entropy of $f(\omega)=f(\omega_1,\ldots, \omega_i, \ldots ))$ with respect to $\mu_i$, as a function of the $i$-th coordinate (with all other values fixed).
\end{proposition}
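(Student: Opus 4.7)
The plan is to prove \eqref{eqn: tensor} by induction on the number of coordinates, built on an exact chain-rule identity for entropy on a product space together with convexity of the single-coordinate entropy functional.

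For the base case $\Omega_1\times\Omega_2$ equipped with $\mu_1\otimes\mu_2$, I would first establish the chain rule
\[Ent(f)=\mathbf{E}_1\bigl[Ent_2 f\bigr]+Ent_1\bigl(\mathbf{E}_2 f\bigr)\]
for any nonnegative $f\in L\log L(\mu)$. This is a direct computation: since $\mathbf{E}_2 f$ depends only on $\omega_1$, one has $\mathbf{E}_2[f\log f]=Ent_2(f)+(\mathbf{E}_2 f)\log(\mathbf{E}_2 f)$, and averaging over $\omega_1$ then subtracting $(\mathbf{E}f)\log\mathbf{E}f$ rearranges the right side into the claimed identity.

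The key step is the inequality $Ent_1(\mathbf{E}_2 f)\le\mathbf{E}_2[Ent_1 f]$, which is Jensen's inequality provided the functional $g\mapsto Ent_1(g)$ is convex. To see convexity I would invoke the dual variational formula
\[Ent_1(g)=\sup\bigl\{\mathbf{E}_1[gh]\,:\,\mathbf{E}_1[e^h]\le 1\bigr\},\]
which exhibits $Ent_1$ as a pointwise supremum of affine functionals of $g$, hence convex. Viewing $\mathbf{E}_2 f$ as the $\mu_2$-average of the family $\{f(\cdot,\omega_2)\}_{\omega_2}$ of functions on $\Omega_1$, Jensen applied to the convex functional $Ent_1$ gives the bound, and combined with the chain rule this settles the two-factor case.

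For the general case, iterate: splitting the product as $\Omega_1\times\prod_{i\ge 2}\Omega_i$, the same chain rule gives $Ent(f)=Ent_1(\mathbf{E}_{>1}f)+\mathbf{E}_1[Ent_{>1}(f)]$, where $Ent_{>1}$ denotes entropy with respect to $\prod_{i\ge 2}\mu_i$ at fixed $\omega_1$. The first summand is $\le\mathbf{E}[Ent_1 f]$ by the convexity argument above, while the inductive hypothesis applied pointwise in $\omega_1$ bounds the second by $\sum_{i\ge 2}\mathbf{E}[Ent_i f]$; infinitely many coordinates are then handled by a routine truncation. The main obstacle is verifying convexity of $Ent_1$: the raw formula $Ent_1(g)=\mathbf{E}_1[g\log g]-(\mathbf{E}_1 g)\log(\mathbf{E}_1 g)$ writes $Ent_1$ as a difference of two convex functionals of $g$, so convexity is not visible directly and genuinely requires the variational representation (i.e.\ Legendre duality between $x\log x$ and $e^{y-1}$). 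Once convexity is in hand the remainder is bookkeeping.
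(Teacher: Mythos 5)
Your argument is correct, and it takes a different route from the one the paper points to. The paper gives no proof of its own, deferring to \cite[Theorem 2.3]{DHS14}, where tensorization is obtained directly from the dual formula \eqref{eqn: entropyvar}: for a dual variable $g$ with $\mathbf{E}e^{g}\le 1$ one telescopes $g$ into increments $g_i=\log\left(\mathbf{E}[e^{g}\mid\mathcal{F}_i]/\mathbf{E}[e^{g}\mid\mathcal{F}_{i-1}]\right)$, each of which satisfies $\mathbf{E}_i e^{g_i}\le 1$, and bounds $\mathbf{E}[fg_i]\le \mathbf{E}[Ent_i f]$ term by term before summing. You instead establish the exact chain rule $Ent(f)=\mathbf{E}_1[Ent_2 f]+Ent_1(\mathbf{E}_2 f)$ and invoke the variational formula only to show that $g\mapsto Ent_1(g)$ is a supremum of affine functionals, hence convex, so that $Ent_1(\mathbf{E}_2 f)\le \mathbf{E}_2[Ent_1 f]$ by Jensen (via Fubini inside the supremum); induction plus a truncation handles the general case. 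Both proofs ultimately rest on the same duality, but yours cleanly separates the two ingredients (an exact decomposition and a data-processing/convexity inequality) and is arguably more conceptual, while the telescoping proof treats all coordinates simultaneously and so avoids both the induction and the limiting argument for infinitely many factors. Your diagnosis that convexity of $Ent_1$ is the genuine obstacle is right: the raw expression is a sum of a convex and a concave functional, so duality (or an equivalent argument) is really needed. The only loose ends are the ones you flag as routine, and they are: measurability of $\omega_2\mapsto Ent_1 f(\cdot,\omega_2)$ for the Fubini step, and the passage to infinitely many coordinates, e.g.\ by applying the finite-coordinate bound to $\mathbf{E}[f\mid\mathcal{F}_n]$ and letting $n\to\infty$ via martingale convergence, noting that there is nothing to prove when the right-hand side of \eqref{eqn: tensor} is infinite.
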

We refer to \cite[Theorem 2.3]{DHS14} for a proof using the variational characterization of entropy:
\begin{equation}\label{eqn: entropyvar}
Ent f = \sup\{\mathbf{E}fg: \mathbf{E}_\mu e^g\le 1\}.
\end{equation}

Using \eqref{eqn: logsobolev}, we can finish the derivation of Theorem \ref{thm: BKS}. Indeed, we then have, first by Proposition \ref{prop: tensorization}, and then by \eqref{eqn: logsobolev}
\begin{equation}\label{eqn: lssplit}
\begin{split}
\sum_{i=1}^\infty  Ent_\mu |\Delta_i F_n|^2 &\le \sum_{i=1}^\infty \sum_{k=1}^\infty \mathbf{E}Ent_k |\Delta_i F_n|^2\\
&\le C\sum_{i=1}^\infty \sum_{k=1}^\infty \mathbf{E} \Bigl[|\Delta_i F_n(a,t_{e_k^c})-\Delta_i F_n(b,t_{e_k^c})|^2\Bigr]\\
\end{split}
\end{equation}
Now we use
\begin{align}\label{eqn: marta}
\setlength{\nulldelimiterspace}{0pt} 
\begin{split}
&(\Delta_i F_n)(a,t_{e_k^c})-(\Delta_i F_n)(b,t_{e_k^c})\\
&\qquad\quad=
\left\{
\begin{aligned}
&0,&i<k\\
&\mathbf{E}[F_n(a,t_{e_k^c})-F_n(b,t_{e_k^c})\mid \mathcal{F}_i],&i=k\\
&\begin{aligned}
&\mathbf{E}[F_n(a,t_{e_k^c})-F_n(b,t_{e_k^c})\mid \mathcal{F}_i]\\
&\quad\quad-\mathbf{E}[F_n(a,t_{e_k^c})-F_n(b,t_{e_k^c})\mid \mathcal{F}_{i-1}],
\end{aligned}&i>k
\end{aligned}\right.
\end{split}
\end{align}
and orthogonality of martingale differences to find
\begin{align*}
&\sum_{i=1}^\infty \sum_{k=1}^\infty \mathbf{E} \Bigl[|\Delta_i F_n(a,t_{e_k^c})-\Delta_i F_n(b,t_{e_k^c}|^2\Bigr]\\
&\qquad\le \sum_{k=1}^\infty \mathbf{E}\Bigl[|F_n(a,t_{e_k^c})-F_n(b,t_{e_k^c})|^2\Bigr].
\end{align*}
At this point, we can proceed as in \eqref{eqn: varbound}, to estimate
\[\sum_{k=1}^\infty \mathbf{E}\Bigl[|F_n(a,t_{e_k^c})-F_n(b,t_{e_k^c})|^2\Bigr] \le \mathbf{E}[\# G_n(0,n\mathbf{e}1)]\le Cn.\]
This completes our derivation of Theorem \ref{thm: BKS}, except for the proof of Lemma \ref{lem: geo1d}.

Let us note that if instead of \eqref{eqn: mubernoulli}, the edge weight distribution $\mu$ is absolutely continuous and satisfies the logarithmic Sobolev inequality
\begin{equation}\label{eqn: gradls}
Ent_\mu f \le C_{LS}\|f'\|_{L^2(\mu)}^2
\end{equation}
for all smooth $f:\mathbb{R}\rightarrow \mathbb{R}$, then the same argument we have given for Bernoulli also gives Theorem \ref{thm: DHS13} for $\mu$. Indeed, applying \eqref{eqn: gradls} in each variable as in \eqref{eqn: lssplit}, we find
\begin{align*}
\sum_{i=1}^\infty  Ent_\mu |\Delta_i F_n|^2 &\le C\sum_{k=1}^\infty \sum_{i=1}^\infty \mathbf{E}|\partial_{t_{e_k}} (\Delta_iF_n)|^2.\\
&= C\sum_{e \in \mathcal{E}(\mathbb{Z}^d)}^\infty \mathbf{E}|\partial_{t_e} F_n|^2,
\end{align*}
where we have used orthogonality of martingale increments and
\[\partial_{t_{e_k}} (\Delta_iF_n)= \begin{cases} 0, &i<k,\\
\mathbf{E}[\partial_{t_{e_k}}F_n\mid \mathcal{F}_i], &i=k,\\
\mathbf{E}[\partial_{t_{e_k}}F_n\mid \mathcal{F}_i]-\mathbf{E}[\partial_{t_{e_k}}F_n\mid \mathcal{F}_{i-1}],&i>k.
\end{cases}\]
Then, use
\[\partial_{t_{e}} T(0,x)  =\mathbf{1}_{\{e\in G_n(0,x)\}},\]
which holds Lebesgue almost surely (see Lemma \ref{lem: linearb}), and conclude as in \eqref{eqn: kestenvarconc}.

The class of distributions for which \eqref{eqn: gradls} holds includes, for example, any absolutely continuous distribution with bounded density on an interval \cite[Proposition 5.1.6]{bakryanalysis}. The case of general $\mu$ is more delicate, and will be discussed in the next part of this article.

\subsection{Geodesics are one-dimensional}\index{geodesic}%
Recall from \eqref{eqn: 1dcalc-1}-\eqref{eqn: 54} that what allowed us to obtain a logarithmic improvement over the linear bound in Kesten's result Theorem \ref{thm: var} is the ``one-dimensionality'' of geodesics. This is expressed by the estimate in Lemma \ref{lem: geo1d}. This section we show how to obtain this result from Kesten's geodesic length estimate, Lemma \ref{lem: perc-lem}.

\begin{proof}[Proof of Lemma \ref{lem: geo1d}]
Choose $a, c$ from Theorem~\ref{lem: perc-lem}.

If $\# (E \cap \gamma) \geq \lambda$ for some $\gamma \in \mathcal{G}$, then we may find the first and last intersections (say $y$ and $z$ respectively) of $\gamma$ with $V$, the set of endpoints of edges in $E$. The portion of $\gamma$ from $y$ to $z$ is then a geodesic with at least $\lambda$ edges. 
This means
\[
\mathbf{P}(\# (E \cap \gamma) \geq \lambda \text{ for some }\gamma \in \mathcal{G}) \leq (\# V) \exp(-c
\lambda) + \mathbf{P}\left( \max_{y,z \in V}T(y,z) \geq a \lambda \right)\ .
\]
Therefore
\begin{align*}
\mathbf{E} \max_{\gamma \in \mathcal{G}} \#(E \cap \gamma) 
&\leq \text{diam}(E) + \sum_{\lambda=\text{diam}(E)}^\infty (\#V) \exp(-c\lambda)\\
&\qquad+ \sum_{\lambda =  \text{diam}(E) }^\infty \mathbf{P}\left( \max_{y,z \in V}T(y,z) \geq a \lambda \right)\ .
\end{align*}
By the inequality $\text{diam}(E) \geq C(\#V)^{1/d}$, the middle term is bounded uniformly in $E$, so we get the upper bound
\[
C\text{diam}(E) + \frac{1}{a} \mathbf{E} \max_{y,z \in V} T(y,z)\ .
\]
By Lemma~\ref{lem: max_lemma} below, this is bounded by $C\text{diam}(E)$.
\end{proof}

We conclude the proof with the following result, which we used at the end of the previous derivation.
\begin{lemma}\label{lem: max_lemma}
If $\mathbf{E}t_e^2<\infty$, there exists $C$ such that for all finite subsets $S$ of $\mathbb{Z}^d$,
\[
\mathbf{E} \left[\max_{x,y \in S} T(x,y)\right] \leq C\mathrm{diam }~S\ .
\]
\end{lemma}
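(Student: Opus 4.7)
The plan is to reduce to a one-sided ``ball bound'' and then estimate that bound by comparison with a deterministic-path sum.

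For the reduction, I would apply the triangle inequality for $T$: fixing any $v_0\in S$, the bound $T(x,y)\le T(x,v_0)+T(v_0,y)$ gives
\[
\max_{x,y\in S}T(x,y)\le 2\max_{x\in S}T(v_0,x).
\]
Translation invariance of the i.i.d.\ field then places $v_0$ at the origin, and with $D:=\operatorname{diam}(S)$ one has $S\subset B_D$, so the lemma follows once we show the ball bound
\[
\mathbf{E}\Bigl[\max_{x\in B_D}T(0,x)\Bigr]\le CD.\qquad(\star)
\]

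For $(\star)$, I would dominate each $T(0,x)$ by a deterministic-path sum. For $x\in B_D$ let $P_x$ be the canonical lexicographic path from $0$ to $x$, of length $\|x\|_1\le D$; the family $\{P_x\}$ forms a tree rooted at $0$. Setting $A_x:=\sum_{e\in P_x}t_e$, we have $T(0,x)\le A_x$, and $A_x$ decomposes along coordinate directions as $\sum_{i=1}^d S_i^{(x_1,\ldots,x_{i-1})}(x_i)$, a sum of partial sums along $d$ axial segments. For fixed $i$, these segments are edge-disjoint across distinct branch points $(x_1,\ldots,x_{i-1})$ and hence the partial sums are independent; under $\mathbf{E}t_e^2<\infty$ each sum of length $k\le D$ has mean and variance at most $Ck$. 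Chebyshev-type control of the maximum over the $O(D^{i-1})$ independent branches at level $i$, summed over $i=1,\ldots,d$, then yields a bound on $\mathbf{E}\max_x A_x$, which by $T(0,x)\le A_x$ gives $(\star)$.

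The main obstacle is that this naive Chebyshev union bound yields only $O(D^{d/2})$, sharp enough when $d=2$ but exceeding $CD$ for $d\ge 3$. Obtaining the linear rate $(\star)$ in all $d$ requires either finer concentration of partial sums (e.g.\ Kesten's sub-Gaussian concentration from \cite{kesten93}, typically established under stronger moment hypotheses), or a more careful use of the tree structure combined with Kesten's length estimate (Lemma~\ref{lem: perc-lem}) to discard rare branches with anomalously large partial sums. The cleanest route in practice is to quote the ball bound $(\star)$ as a consequence of Kesten's analysis in \cite{kesten93} (see also \cite{50years}), whose arguments are tailored to give a linear-in-diameter estimate under the second-moment hypothesis used in the lemma.
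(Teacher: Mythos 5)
Your reduction to the one-sided ball bound $(\star)$ via the triangle inequality is exactly what the paper does (it fixes $x_0\in S$ and writes $\max_{x,y}T(x,y)\le 2\max_y T(x_0,y)$), but the proof of $(\star)$ itself has a genuine gap, which you acknowledge: a single deterministic path gives only a Chebyshev tail $\mathbf{P}(T(0,x)\ge\lambda)\lesssim \lambda^{-2}$, and the union bound over the $\asymp D^d$ points of $B_D$ then loses a factor $D^{d/2}$. Deferring to ``Kesten's analysis'' at that point is not a proof. The missing idea is elementary and does not require any new concentration input: between $x$ and $y$ one can construct $2d$ \emph{edge-disjoint} deterministic paths, each of length at most $N\|x-y\|_1\le N\operatorname{diam}(S)$. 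Then $T(x,y)\le\min(T_1,\dots,T_{2d})$ where the $T_i$ are independent sums of at most $N\operatorname{diam}(S)$ i.i.d.\ weights, so by independence the tail is the \emph{product} of $2d$ Chebyshev bounds,
\[
\mathbf{P}(T(x,y)\ge\lambda)\le\Bigl[\tfrac{N\operatorname{diam}(S)\mathbf{Var}(t_e)}{(\lambda-N\operatorname{diam}(S)\mathbf{E}t_e)^2}\Bigr]^{2d},
\]
i.e.\ it decays like $\lambda^{-4d}$ for $\lambda\gtrsim\operatorname{diam}(S)$. This polynomial rate is strong enough to absorb the union bound over $\#S\lesssim(\operatorname{diam}S)^d$ points: summing the tail from $\lambda=M\operatorname{diam}(S)$ upward gives a contribution $O((\operatorname{diam}S)^{1-2d})$ per point, hence $O((\operatorname{diam}S)^{1-d})=O(1)$ in total, and $\mathbf{E}\max_{y}T(x_0,y)\le C\operatorname{diam}(S)$ follows. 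Your lexicographic-tree decomposition is not needed; replacing it with the disjoint-paths trick closes the argument under exactly the hypothesis $\mathbf{E}t_e^2<\infty$.
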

\begin{proof}
Given $x,y \in S$, we first obtain an estimate for the tail of the distribution of $T(x,y)$. For this, note that we can build $2d$ disjoint (deterministic) paths from $x$ to $y$ of length $N\|x-y\|_1)$ for some integer $N$. This means that $\tau(y,z)$ is bounded above by the minimum of $2d$ variables $T_1, \ldots, T_{2d}$, the collection being i.i.d. and each variable distributed as the sum of $N\text{diam}(S)$ i.i.d. variables $t_e$, so
\[
\mathbf{P}(T(x,y) \geq \lambda) \leq \prod_{i=1}^{2d} \mathbf{P}(T_i \geq \lambda) \leq \left[ \frac{N\text{diam}(S)\mathbf{Var}(t_e)}{(\lambda-N\text{diam}(S) \mathbf{E} t_e)^2} \right]^{2d}\ .
\]
Therefore if we fix some $x_0 \in S$, for $M=2N\mathbf{E}t_e$,
\begin{equation}\label{eqn: Ptailbound}
\begin{split}
&\sum_{\lambda = M \text{diam}(S)}^\infty \lambda \max_{y \in S} \mathbf{P}(T(x_0,y) \geq \lambda) \\
&\qquad\qquad\leq (2M \text{diam}(S) \mathbf{Var} t_e)^{2d} \sum_{\lambda=M \text{diam}(S)}^\infty \lambda^{1-4d} = C(\text{diam }S)^{2-2d}\ .
\end{split}
\end{equation}
The result now follows by subadditivity, \index{subadditivity}%
\begin{align*}
\mathbf{E}\left[ \max_{x,y \in S} T(x,y) \right] \leq 2 \mathbf{E} \left[ \max_{y \in S} T(x_0,y) \right]  &\leq 2M \text{diam }S \\
&\qquad+ 2 \#S\sum_{\lambda=M\text{diam}(S)}^\infty \max_{y \in S} \mathbf{P}(T(x_0,y) \geq \lambda) \\
&\leq C\text{diam } S\ ,
\end{align*}
where in the second step we have used \[\mathbf{P}(\max_{y \in S}T(x_0,y) \geq \lambda) \le (\#S) \max_{y \in S} \mathbf{P}(T(x_0,y) \geq \lambda),\] and in the final step we
have used \eqref{eqn: Ptailbound}.
\end{proof}

\section{General edge-weight distributions}
To prove Theorem \ref{thm: DHS13} for general edge weight distributions we must find an alternative to the log-Sobolev inequality that we used in the case of Bernoulli weights to obtain the bound
\[\sum_{i=1}^\infty Ent_i|\Delta_i F_n|^2 \le Cn.\]
The proof rests on two main ideas:
\begin{enumerate}
\item Representing each of the edge weights $t_e$ as the pushforward of a Bernoulli sequence $(\omega_{e,j})_{j\ge 1}$. This representation allows to use the discrete log-Sobolev inequality \eqref{eqn: logsobolev} to bound the entropy by a double sum over $e$ and $j$ of discrete derivatives corresponding to the two values of $\omega_{e,j}$. After a computation involving a variant of our ``Ingredient 1'' (see Lemma \ref{linear}), it is found that
\begin{equation}\label{eqn: logsum}
\sum_{i=1}^\infty Ent_i|\Delta_i F_n|^2  \le C\mathbf{E}\sum_{e\in G_n(0,n\mathbf{e}_1)} (1-\log F(t_e)),
\end{equation}
where $F$ is the distribution function of $\mu$. Note that without the $\log F$ term, the sum on the right would be $O(n)$ by \eqref{eqn: Gn-lin}.
\item A lattice animals argument to show that under the assumption \eqref{eqn: logte}, the quantity \eqref{eqn: logsum} is of order $O(n)$. \index{lattice animals}%
\end{enumerate}

Let $\omega_{e,j}$, $e\in \mathcal{E}(\mathbb{Z}^d)$, $j\ge 1$ be a collection of independent Bernouli($\frac{1}{2}$)-distributed random variables on a product probability space $(\Omega_B,\pi=\prod_{e,j}\pi_{e,j})$. We denote
\[
\omega_B = \left\{ \omega_{e,j} : e \in \mathcal{E}^d,~j \geq 1 \right\}\ .
\]
For fixed $e$, denote by $\omega_e$ the vector $(\omega_{e,j})_{j\ge 1}$, and define the random variable
\begin{equation}
\label{eqn: Uedef}
U_e(\omega_e)=\sum_{j=1}^\infty \frac{\omega_{e,j}}{2^j}.
\end{equation}
Under $\pi$, for each $e$, $U_e(\omega_e)$ is uniformly distributed on $[0,1]$, and the collection $\{U_e(\omega_e)\}_{e\in \mathcal{E}^d}$ is independent. 

We denote by $F(x)$, $x\ge 0$ the distribution function of $\mu$, and by $I$ the infimum of the support:
\[I=\inf\{x:F(x)>0\}.\] 
The right-continuous inverse of $F$ is
\[F^{-1}(y) = \inf\{x: F(x) \ge y\}.\]
If $U$ is uniformly distributed on $[0,1]$, we have 
\begin{equation}
\label{eqn: uniform}
\mathbb{P}(F^{-1}(U)\le x) = \mathbb{P}(U\le F(x))= F(x),
\end{equation}
that is, $F^{-1}(U)$ has distribution $\mu$. Letting
\begin{equation}\label{eqn: Tedef}
\varphi_e(\omega_e) = F^{-1}(U_e(\omega_e)),
\end{equation}
the distribution of $\varphi_e$ under $\pi$ is $\mu$.

We gather the $\varphi_e$ into a product map $\varphi:= \Omega_B \to \Omega=[0,\infty)^{\mathcal{E}^d}$:
\[
\varphi(\omega_B) = (\varphi_e(\omega_e) : e \in \mathcal{E}^d)\ .
\]
By \eqref{eqn: uniform}, $\pi \circ \varphi^{-1} = \mathbb{P}$.

Functions $f$ (in particular, $F_n$) on the original space $\Omega$ can be written as functions on $\Omega_B$, through the map $\varphi$. We will estimate discrete derivatives, so for a function $f: \Omega_B \to \mathbb{R}$, set
\begin{equation}
\left(\nabla_{e,j} f\right)(\omega_B) = f(\omega_B^{e,j,+}) - f(\omega_B^{e,j,-})\ ,
\end{equation}
where $\omega_B^{e,j,+}$ agrees with $\omega_B$ except possibly at $\omega_{e,j}$, where it is $1$, and $\omega_B^{e,j,-}$ agrees with $\omega_B$ except possibly at $\omega_{e,j}$, where it is 0.

\begin{lemma}\label{lem: bern}
We have the following inequality:
\[
\sum_{i=1}^\infty Ent(\Delta_i F_n ^2) \leq \sum_{e,j} \mathbf{E}_\pi\left(\nabla_{e,j}F_n\circ \varphi \right)^2\ .
\]
\end{lemma}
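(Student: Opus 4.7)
The plan is to transfer the entropy bound to the Bernoulli product space $(\Omega_B,\pi)$ via the pushforward $\varphi$, decompose each $Ent(\Delta_i F_n)^2$ by tensorizing over the Bernoulli coordinates $(e,j)$, apply the discrete log-Sobolev inequality of Proposition \ref{prop: logsobolev} in each coordinate, and finally collapse the sum over $i$ by orthogonality of martingale increments. Since $\pi\circ\varphi^{-1}=\mathbf{P}$, one may view $F_n$ as a function on $\Omega_B$ via $F_n\circ\varphi$; for some fixed enumeration $e_1,e_2,\ldots$ of the edges, the martingale increments in \eqref{eqn: Di} agree with those computed with respect to the filtration $\widetilde{\mathcal{F}}_i=\sigma(\omega_{e_k,j}:k\le i,\ j\ge 1)$ on $\Omega_B$.

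First I would apply Proposition \ref{prop: tensorization} to the product structure of $\Omega_B=\prod_{e,j}\{0,1\}$, for each fixed $i$:
\[
Ent_\pi(\Delta_i F_n)^2 \le \sum_{e,j}\mathbf{E}_\pi\, Ent_{e,j}(\Delta_i F_n)^2,
\]
where $Ent_{e,j}$ is the entropy with respect to $\pi_{e,j}$ with all other Bernoulli coordinates held fixed. Viewing $\Delta_i F_n$ as a function of the single coordinate $\omega_{e,j}$ and invoking Proposition \ref{prop: logsobolev} gives $Ent_{e,j}(\Delta_i F_n)^2 \le \tfrac{1}{2}|\nabla_{e,j}\Delta_i F_n|^2$ (the $\tfrac{1}{2}$ being absorbed into the stated bound). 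Summing over $i$ yields
\[
\sum_{i=1}^\infty Ent_\pi(\Delta_i F_n)^2 \le \sum_{e,j}\sum_{i=1}^\infty \mathbf{E}_\pi|\nabla_{e,j}\Delta_i F_n|^2.
\]

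The remaining step is to absorb the inner sum over $i$, paralleling the computation in \eqref{eqn: marta}. Fix $e=e_k$ and $j$. For $i<k$, the conditional expectation $\mathbf{E}[F_n\circ\varphi\mid\widetilde{\mathcal{F}}_i]$ does not depend on $\omega_{e_k,j}$, so $\nabla_{e_k,j}\Delta_i F_n=0$. For $i\ge k$, the flip operator $\nabla_{e_k,j}$ commutes with $\mathbf{E}[\,\cdot\mid\widetilde{\mathcal{F}}_i]$ in the sense that it may be applied first to yield a function independent of $\omega_{e_k,j}$, after which the resulting conditional expectation is taken on the sub-$\sigma$-algebra $\widetilde{\mathcal{F}}_i^{(k,j)}$ obtained from $\widetilde{\mathcal{F}}_i$ by removing $\omega_{e_k,j}$. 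Consequently $\{\nabla_{e_k,j}\Delta_i F_n\}_{i\ge k}$ is the martingale-difference sequence of $\nabla_{e_k,j}(F_n\circ\varphi)$ with respect to the increasing filtration $\{\widetilde{\mathcal{F}}_i^{(k,j)}\}_{i\ge k}$, and orthogonality (together with $L^2$ convergence of the Doob martingale) gives
\[
\sum_{i=1}^\infty \mathbf{E}_\pi|\nabla_{e_k,j}\Delta_i F_n|^2 \le \mathbf{E}_\pi|\nabla_{e_k,j}(F_n\circ\varphi)|^2.
\]
Summing over $(k,j)$ produces the claimed bound.

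The main obstacle I anticipate is not a deep one but a bookkeeping one: verifying carefully that $\nabla_{e,j}$ commutes with $\mathbf{E}[\,\cdot\mid\widetilde{\mathcal{F}}_i]$ in the precise sense above, since both the filtration index $i$ and the Bernoulli index $(e,j)$ are being manipulated together. A secondary point is the justification of the infinite sums over $(e,j)$, since each edge carries countably many Bernoulli coordinates and the representation \eqref{eqn: Uedef} mixes them nonlinearly through $F^{-1}$; the finite second-moment hypothesis $\mathbf{E}t_e^2<\infty$ already used in Lemma \ref{lem: approx} should suffice to legitimize the interchanges of summation and limiting arguments.
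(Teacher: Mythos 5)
Your proposal is correct and follows essentially the same route as the paper: identify $\Delta_i F_n$ with the martingale increments of $G=F_n\circ\varphi$ on $\Omega_B$, tensorize the entropy over the Bernoulli coordinates $(e,j)$, apply the discrete log-Sobolev inequality coordinatewise, and then collapse the sum over $i$ via the \eqref{eqn: marta}-type computation and orthogonality of martingale increments. Your third paragraph simply spells out in more detail the step the paper summarizes as ``computing $\nabla_{e,j}W_k$ as in \eqref{eqn: marta} and using orthogonality of increments.''
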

\begin{proof}
Once the notation is properly set up, the proof is a straightforward application of the log-Sobolev inequality and tensorization of entropy like what we saw in the case of $\mu$ Bernoulli. 

We let $G= F_n \circ \varphi$, and denote by $\mathcal{G}_k$ the sigma-algebra generated by $\{\omega_{e_r,j}, r\le k, j\in \mathbb{N}\}$. Then we have
\[\mathbf{E}[F_m \mid \mathcal{F}_i](\varphi(\omega_B))=\mathbf{E}[G\mid \mathcal{G}_i](\omega_B)\]
$\pi$-almost surely. Define
\[W_i = \mathbf{E}_\pi[ G\mid \mathcal{G}_i]-\mathbf{E}_\pi[ G\mid \mathcal{G}_{i-1}] \]
Using Propostion \ref{prop: tensorization}, we find
\[\sum_{i=1}Ent_\mu (\Delta_i F_n^2) = \sum_{i=1}Ent_\pi (W_i^2)
\le \sum_{k=1}^\infty \mathbf{E}_\pi \sum_{e,j} Ent_{\pi_{e,j}} (W_i)^2.\]

Using the log-Sobolev inequality \eqref{eqn: logsobolev}, the last quantity is bounded by
\[\sum_{i=1}^\infty \mathbf{E}_\pi \sum_{e,j} Ent_{\pi_{e,j}} (W_i)^2 \le C\sum_{i=1}^\infty \sum_{e,j} \mathbf{E}_\pi[(\nabla_{e,j} W_i)^2]. \]
Computing $\nabla_{e,j}W_k$ as in \eqref{eqn: marta} and using orthogonality of increments, we can sum over $k$ to find
\begin{equation}\label{eqn: Gs}
C\sum_{e,j} \mathbf{E}_\pi(\nabla_{e,j} G)^2.
\end{equation}
The lemma is proved.
\end{proof}

\subsection{The key estimate}
The $L^2$ norm of the discrete derivatives appearing in \eqref{eqn: Gs} can be expressed in terms of the variables $U(\omega_e)$ in \eqref{eqn: Uedef} as
\begin{align*}
&\mathbf{E}_\pi[(\nabla_{e,j}G)^2] = \mathbf{E}_\pi (F_n(1,(\omega_{e',i})_{(e',i)\neq(e,j)})-F_n(0,(\omega_{e',i})_{(e',i)\neq(e,j)}))^2\\
&\qquad= \frac{1}{2}\mathbf{E}_\pi[(F_n(\omega_{e,j}, (\omega_{e',i})_{(e',i)\neq(e,j)})-F_n(0, (\omega_{e',i})_{(e',i)\neq(e,j)}))^2\mathbf{1}_{\{\omega_{e,j}=1\}}]\\
&\qquad\le \frac{1}{2}\mathbf{E}_\pi[(F_n(U_e(\omega_e),(U_{e'})_{e'\neq e})-F_n(U_e(\omega_e-2^{-j}),(U_{e'})_{e'\neq e}))^2\mathbf{1}_{\{U_e\ge 2^{-j}\}}],
\end{align*}
with the obvious identifications of $F_n$ with $F_n\circ \varphi$.
Recalling the definition of $F_n$ \eqref{eqn: Fn}, the last quantity is bounded by
\begin{equation}
\begin{split}
&\mathbf{E}[(F_n(U_e(\omega_e),(U_{e'})_{e'\neq e})-F_n(U_e(\omega_e-2^{-j}),(U_{e'})_{e'\neq e}))^2\mathbf{1}_{\{U_e\ge 2^{-j}\}}]\\
&\qquad\le \frac{1}{\# B_m}\sum_{z\in B_m} \mathbf{E}_\pi[(T_z(U_e(\omega_e))-T_z(U_e(\omega_e)-2^{-j}))^2\mathbf{1}_{\{U_e\ge 2^{-j}\}}].
\end{split}
\end{equation}
For simplicity of notation, we have omitted the dependence of $T_z:=T(z,z+n\mathbf{e}_1)$ on variables $U_{e'}$, $e'\neq e$. We have so far shown that to estimate
\[\sum_{i=1}Ent_\mu (\Delta_i F_n^2),\]
it will suffice to estimate
\begin{equation}\label{eqn: overj}
\sum_{e\in\mathcal{E}(\mathbb{Z}^d)} \frac{1}{\# B_m}\sum_{z\in B_m} \sum_{j=1}^\infty \mathbf{E}_\pi[(T_z(U_e(\omega_e))-T_z(U_e(\omega_e)-2^{-j}))^2_+\mathbf{1}_{\{U_e\ge 2^{-j}\}}].
\end{equation}
We would like to perform the summation over $j$ and obtain a term similar to the summands in \eqref{eqn: logsum}.

To move further, we need an estimate for
\begin{equation}\label{eqn: Tz}
T_z(U_e(\omega_e))-T_z(U_e(\omega_e)-2^{-j})
\end{equation}
that is summable in $j$.
For this purpose, we introduce a more refined version of ``Ingredient 1'' (See section \ref{sec: ingredients}), concerning the effect on $T_n$ of changing one edge weight. Whereas previously Lemma \ref{linear} was sufficient, we will need the following more precise result:
\begin{lemma}\label{lem: linearb}
The random variable
\[D_{z,e_i}=\sup[\{r\ge 0: t_e=r \text{ and } e \text{ is in a geodesic from } z \text{ to } z+z\}\cup \{0\}]\]
is almost surely finite. If $0\le s \le t$,
\begin{equation}
T_z(t,t_{e_i^c})-T_z(s,t_{e_i^c})=\min\{t-s,(D_{z,e}-s)_+\}.
\end{equation}
\end{lemma}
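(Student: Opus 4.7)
The plan is to decompose the passage time $T_z(\,\cdot\,,t_{e^c})$ explicitly into its dependence on $t_e$. Fix all other edge-weights $t_{e^c}=(t_{e'})_{e'\neq e}$, and define
\[A=A(t_{e^c})=\inf\{T(\gamma)-t_e:\gamma\text{ a self-avoiding path }z\to z+n\mathbf{e}_1\text{ traversing }e\},\]
\[B=B(t_{e^c})=\inf\{T(\gamma):\gamma\text{ a self-avoiding path }z\to z+n\mathbf{e}_1\text{ not traversing }e\}.\]
Both quantities are independent of $t_e$, and both are almost surely finite because an explicit deterministic path of each type can be exhibited, and under $\mathbf{E}t_e<\infty$ every finite deterministic path has finite passage time a.s. Since any path either uses $e$ or does not,
\[T_z(x,t_{e^c})=\min(A+x,\,B),\]
so $T_z(\,\cdot\,,t_{e^c})$ is a piecewise-linear concave function of $x$ with slope $1$ on $[0,(B-A)_+]$ and slope $0$ beyond.

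From this representation I would read off $D_{z,e}$ directly. The edge $e$ lies in some geodesic at edge-weight $x$ iff the minimum $\min(A+x,B)$ is attained by a path through $e$, i.e. iff $A+x\le B$, i.e. iff $x\le B-A$. Hence the set in the definition is the closed interval $[0,B-A]$ when $B\ge A$ and empty otherwise, so
\[D_{z,e}=(B-A)_+,\]
which is a.s. finite. Note that at equality $A+x=B$ there is still a geodesic through $e$ (coexisting with one avoiding $e$), so the supremum is attained; this is the only point in the argument that needs a small verification.

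With $T_z(x,t_{e^c})=\min(A+x,B)$ and $D_{z,e}=(B-A)_+$, the stated identity reduces to elementary case analysis on the position of $s\le t$ relative to $D_{z,e}$. If $s\ge D_{z,e}$ then $A+s\ge B$ forces $T_z(s)=B=T_z(t)$ and both sides vanish. If $t\le D_{z,e}$ then $A+t\le B$, both values are linear in the edge weight, and the difference equals $t-s=\min\{t-s,D_{z,e}-s\}$. In the mixed case $s<D_{z,e}\le t$ one has $T_z(s)=A+s$ and $T_z(t)=B=A+D_{z,e}$, so the difference is $D_{z,e}-s$, which equals $\min\{t-s,(D_{z,e}-s)_+\}$ since $t-s\ge D_{z,e}-s$ in this regime. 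I do not anticipate any genuine obstacle: the lemma is essentially the observation that $T_z$ is a concave piecewise-linear function of $t_e$ with a single bend at $D_{z,e}$, and everything else is bookkeeping around that corner.
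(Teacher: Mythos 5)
Your proof is correct and is essentially the argument the paper has in mind: the paper only sketches this lemma (deferring the details to \cite{DHS13}), describing exactly the picture you formalize, namely that $T_z(\cdot,t_{e^c})$ is linear in the edge weight until $e$ leaves every geodesic and is constant thereafter. Your explicit decomposition $T_z(x,t_{e^c})=\min(A+x,B)$ with $D_{z,e}=(B-A)_+$ is the standard way to make this precise, and you are also right that whether the supremum is attained at $x=B-A$ (which hinges on attainment of the infimum $A$) is immaterial to the stated identity.
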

The content of Lemma \ref{lem: linearb} is clear: the difference in passage times $T_z(t)-T_z(s)$ is linear, as long as $t$ is not so large that the edge $e_i$ is no longer in a geodesic. The logic of the proof is similar to the argument in Lemma \ref{linear}, see \cite{DHS13}. Two crucial properties of $D_{z,e}$ are 
\begin{enumerate}
\item $D_{z,e}$ depends only on edge weights $t_{e'}$, $e'\neq e$, but not on $t_e$.
\item  $T_z(U_e(\omega_e))$ is constant on $U_e\in [F(D_{z,e}^-),1]$ (for fixed values of $t_{e^c}$.
\end{enumerate}
\index{Rossignol, Raphael}%
To perform the summation over $j$ in \eqref{eqn: overj}, we combine Lemma \ref{lem: linearb} with the following lemma, due to R. Rossignol, which summarizes a less transparent computation which appeared in \cite{DHS13}. 
\begin{lemma} \label{lem: uniformvar}
Let $a,\tau\in [0,1]$. Suppose f is nonnegative, non-decreasing on $[0,1]$, and that $f$ is constant on $[a,1]$. If $\tau \le 1/2$, then 
\begin{equation}\label{eqn: a>1/2}
\int_\tau^1 (f(x)-f(x-\tau))^2\,\mathrm{d}x \le \int_0^1 f^2(x) \mathbf{1}_{\{x\ge 1-\tau\}}\,\mathrm{d}x.
\end{equation}
Moreover:
\begin{enumerate}
\item If $a\le \tau \le \frac{1}{2}$,
\begin{equation} \label{eqn: a<tau}
\int_\tau^1(f(x)-f(x-\tau))^2\,\mathrm{d}x\le 2a\int_0^1 f^2(x)\,\mathrm{d}x.
\end{equation}
\item If $\tau\le a\le \frac{1}{2}$,
\begin{equation}
\label{eqn: a>tau}
\int_\tau^1 (f(x)-f(x-\tau))^2\,\mathrm{d}x \le 2\tau \int_0^1 f^2(x)\,\mathrm{d}x.
\end{equation}
\end{enumerate}
\end{lemma}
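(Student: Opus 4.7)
The plan is to prove the master inequality \eqref{eqn: a>1/2} first and then deduce parts (1) and (2) from it (or by a refinement), exploiting the hypothesis that $f$ is non-negative, non-decreasing, and constant on $[a,1]$.

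For the master inequality, the key observation is that monotonicity and nonnegativity of $f$ yield the pointwise bound
\[
(f(x)-f(x-\tau))^2 \le f(x)^2 - f(x-\tau)^2,
\]
which follows from expanding the square and using $f(x)\ge f(x-\tau)\ge 0$ to obtain $2f(x-\tau)(f(x)-f(x-\tau))\ge 0$. Integrating this bound on $[\tau,1]$ and changing variables $y=x-\tau$ in the subtracted term gives
\[
\int_\tau^1 (f(x)-f(x-\tau))^2\,dx \le \int_{1-\tau}^1 f^2(x)\,dx - \int_0^\tau f^2(x)\,dx \le \int_0^1 f^2(x)\mathbf{1}_{\{x\ge 1-\tau\}}\,dx,
\]
which is \eqref{eqn: a>1/2}. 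Note this derivation does not even use the constancy of $f$ on $[a,1]$ or the constraint $\tau\le 1/2$; the latter hypothesis enters only in the refinements below.

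For part (1), where $a\le\tau\le 1/2$, I would argue directly rather than via the master inequality. Since $x\ge\tau\ge a$ throughout the range of integration and $f$ is constant equal to $f(a)$ on $[a,1]$, we have $f(x)=f(a)$, so the integrand vanishes unless $x-\tau<a$, i.e.\ $x<a+\tau\le 1$. Changing variables $y=x-\tau$ gives
\[
\int_\tau^1 (f(x)-f(x-\tau))^2\,dx = \int_0^a (f(a)-f(y))^2\,dy \le a\,f(a)^2.
\]
Using the lower bound $\int_0^1 f^2\,dx \ge (1-a)f(a)^2 \ge \tfrac{1}{2}f(a)^2$ (where we finally invoke $a\le 1/2$), we conclude $a\,f(a)^2 \le 2a\int_0^1 f^2\,dx$, which is \eqref{eqn: a<tau}.

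For part (2), where $\tau\le a\le 1/2$, apply \eqref{eqn: a>1/2} and note that $1-\tau\ge 1-a\ge a$, so $f\equiv f(a)$ on $[1-\tau,1]$ and therefore $\int_{1-\tau}^1 f^2\,dx = \tau\,f(a)^2$. Combining with the same lower bound $\int_0^1 f^2\,dx \ge \tfrac{1}{2}f(a)^2$ yields $\tau f(a)^2 \le 2\tau\int_0^1 f^2\,dx$, which is \eqref{eqn: a>tau}. There is no substantive obstacle here; the lemma is essentially a bookkeeping exercise, and the only mildly subtle point is recognizing that part (1) is genuinely stronger than what the master inequality alone provides (which would only give a $2\tau$ factor), so one must bypass it by exploiting the constancy of $f$ on the shorter interval $[a,a+\tau]$ rather than on $[1-\tau,1]$.
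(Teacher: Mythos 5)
Your proof is correct. The master inequality \eqref{eqn: a>1/2} is established exactly as in the paper (expand the square, use $2f(x-\tau)(f(x)-f(x-\tau))\ge 0$, translate, drop the $\int_0^\tau f^2$ term), but your treatment of parts (1) and (2) is genuinely different. The paper localizes the integral to $[\tau,a+\tau]$ and then invokes the Chebyshev association inequality, writing for instance $\int_\tau^1 f^2\,\mathbf{1}_{\{x\le a+\tau\}}\,dx\le \frac{a}{1-\tau}\int_\tau^1 f^2\,dx$ --- the author even flags positive association as the crucial ingredient. You bypass association entirely: since $f\equiv f(a)$ on $[a,1]$ you bound the localized integral pointwise by $a\,f(a)^2$ (resp.\ $\tau\,f(a)^2$ via the master inequality) and compare against the elementary lower bound $\int_0^1 f^2\,dx\ge(1-a)f(a)^2\ge\tfrac12 f(a)^2$, which is where the hypothesis $a\le\tfrac12$ (resp.\ $\tau\le\tfrac12$) enters. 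Both routes give the stated constants; yours is more elementary and makes transparent why the factor in part (1) is $2a$ rather than the weaker $2\tau$ that \eqref{eqn: a>1/2} alone would yield, a point you correctly identify. The only cosmetic caveat is that your appeal to $x\ge\tau\ge a$ in part (1) silently uses $a+\tau\le 1$ (which holds since $a\le\tau\le\tfrac12$), so the change of variables lands exactly on $[0,a]$; this is fine but worth stating.
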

We defer the proof to section \ref{sec: rossignol} below.

With Lemmas \ref{lem: linearb} and \ref{lem: uniformvar} in hand, we can now obtain \eqref{eqn: logsum}. Fix an edge $e$. We apply Lemma \ref{lem: uniformvar} with
\begin{align*}
f(x)&=T_z(x)-T_z(0),\\
\tau&=2^{-j},\\
a&=F(D_{z,e}^-).
\end{align*}
Writing the expectation over $U_e$ explicitly:
\begin{align*}
&\mathbf{E}_{\pi_e}[(T_z(U_e(\omega_e))-T_z(U_e(\omega_e)-2^{-j}))^2\mathbf{1}_{\{U_e\ge 2^{-j}\}}]\\
&\qquad=\int_{2^{-j}}^1 (T_z(x)-T_z(x-2^{-j}))^2\,\mathrm{d}x.
\end{align*}
For $j$ such that $F(D_{z,e}^-)<2^{-j}$ ($a<\tau$), we use \eqref{eqn: a<tau}:
\[\int_{2^{-j}}^1 (T_z(x)-T_z(x-2^{-j}))^2\,\mathrm{d}x \le  2F(D_{z,e}^-)\int_0^1(T_z(x)-T_z(0))^2\,\mathrm{d}x.\]
Recall that $T_z(x)$ also depends on $\omega_{e'}$, $e'\neq e$. By Lemma \ref{lem: linearb}, we have
\[0\le T_z(x)-T_z(0)\le \min\{D_{z,e}, \varphi_e(\omega_e)\},\]
where $\varphi_e$ was defined in \eqref{eqn: Tedef}, so 
\[\int_0^1(T_z(x)-T_z(0))^2\,\mathrm{d}x \le \mathbf{E}_{\pi_e}[\min\{D_{z,e}, \varphi_e(\omega_e)\}]^2.  \]
For  $j$ such that $2^{-j}\le F(D^-_{z,e})\le \frac{1}{2}$, $(\tau \le a\le \frac{1}{2})$, we use \eqref{eqn: a>tau}:
\[\int_{2^{-j}}^1 (T_z(x)-T_z(x-2^{-j}))^2\,\mathrm{d}x \le 2\cdot 2^{-j} \mathbf{E}_{\pi_e}[\min\{D_{z,e}, \varphi_e(\omega_e)\}]^2.\] 
We now sum over $j$ in \eqref{eqn: overj}. For $F(D_{z,e}^-)\le \frac{1}{2}$ we find
\begin{align*}
&\sum_{j=1}^\infty \mathbf{E}_{\pi_e}\Bigl[(T_z(U_e(\omega_e)-T_z(U_e(\omega_e)-2^{-j}))^2\mathbf{1}_{\{U_e\ge 2^{-j}\}}\Bigr] \\
&\qquad\le \!\!\sum_{j: 2^{-j}>F(D_{z,e}^-)} \!\!\!\!\!\!\mathbf{E}_{\pi_e}\Bigl[\min\{D_{z,e}, \varphi_e(\omega_e)\}\Bigr]^2+\!\!\!\sum_{j: 2^{-j}\le F(D_{z,e}^-)} \!\!\!\!\!\!\!\!\!2^{1-j}\mathbf{E}_{\pi_e}\Bigl[\min\{D_{z,e}, \varphi_e(\omega_e)\}\Bigr]^2\\
&\qquad\le  2F(D_{z,e}^-)(2-\log_2F(D_{z,e}^-))\mathbf{E}_{\pi_e}\Bigl[\min\{D_{z,e}, \varphi_e(\omega_e)\}\Bigr]^2.
\end{align*}

If instead $F(D_{z,e}^-)\ge \frac{1}{2}$, we use \eqref{eqn: a>1/2} in the sum over $j$ such that $2^{-j}\le F(D^-_{z,e})$:
\begin{align*}
&\sum_{j: 2^{-j}\le F(D_{z,e}^-)} \int_0^1 [(T_z(x)-T_z(x-2^{-j}))^2\mathbf{1}_{\{x\ge 2^{-j}\}}\,\mathrm{d}x\\
&\qquad\qquad\le \int_0^1 \min\{D_{z,e}, F^{-1}(x)\}^2\sum_{j: 2^{-j}\le F(D_{z,e}^-)}\mathbf{1}_{\{1-x\le 2^{-j}\}}(x)\,\mathrm{d}x\\
&\qquad\qquad\le \int_0^1  \min\{D_{z,e}, F^{-1}(x)\}^2 \log_2\frac{1}{1-x}\,\mathrm{d}x\\
&\qquad\qquad\le 2F(D^-_{z,e})\int_0^1  \min\{D_{z,e}, F^{-1}(x)\}^2  \log_2\frac{1}{1-x}\,\mathrm{d}x.
\end{align*}

Combining the previous calculations, we find:
\begin{equation}\label{eqn: ross}
\begin{split}
&\sum_{j=1}^\infty \mathbf{E}_\pi[T_z(U_e(\omega_e)-T_z(U_e(\omega_e)-2^{-j})^2_+\mathbf{1}_{U_e\ge 2^{-j}\}}] \\
&\qquad\le C\mathbf{E}_{\pi_e}\,F^{-1}(U_e)^2\left(1+\log_2^+\frac{1}{1-U_e}\right) \mathbf{E}_\pi F(D_{z,e}^-)(1-\log_2 F(D_{z,e}^-).
\end{split}
\end{equation}
Note the use of independence of $D_{z,e}$ from $\omega_e$. To obtain an expression of the form $\eqref{eqn: logsum}$, we integrate by parts:
\begin{equation}
\begin{split}
-F(y^-)\log F(y^{-}) &= -\int\mathbf{1}_{[I,y)}(x)\log F(y^-)\,\mu(\mathrm{d}x)\\
&\le -\int \mathbf{1}_{[I,y)}(x)\log F(x)\,\mu(\mathrm{d}x).
\end{split}
\end{equation}
Thus
\[F(D_{z,e}^-)(1-\log_2 F(D_{z,e}^-) = \int_I^{D_{z,e}}(1-\log F(t_e)\,\mu(\mathrm{d}t_e).\]
Finally, the quantity
\[\mathbf{E}_{\pi_e}\,F^{-1}(U_e)^2\left(1+\log_2\frac{1}{1-U_e}\right)\]
can be estimated by
\[2Ent_\mu t_e^2 + 2\mathbf{E}_\mu t_e^2 \mathbf{E}_{\pi_e} \frac{1}{(1-U_e)^{1/2}} \]
using the variational charaterization of entropy, \eqref{eqn: entropyvar}. By \eqref{eqn: logte}, this is a finite constant.

By Lemma \ref{lem: linearb}, if $t_e< D_{z_e}$, then $e\in G_n(z,z+n\mathbf{e}_1)$, so
\[\int_I^{D_{z,e}} \sum_{e\in \mathcal{E}(\mathbb{Z}^d} (1-\log F(t_e)\,\mu(\mathrm{d}t_e) \le \sum_{e\in G_n(z,z+n\mathbf{e}_1)} (1-\log F(t_e))\,\mu(\mathrm{d}t_e) \]
almost surely. Thus \eqref{eqn: ross} gives
\begin{align}
&\sum_{j=1}^\infty \mathbf{E}_\pi[(T_z(U_e(\omega_e)-T_z(U_e(\omega_e)-2^{-j}))^2_+\mathbf{1}_{\{U_e\ge 2^{-j}\}}]\\
&\qquad\le \mathbf{E}_\pi \sum_{e\in G_n(z,z+n\mathbf{e}_1)} (1-\log F(t_e))\,\mu(\mathrm{d}t_e).
\end{align}

Shifting by $z$, we have now derived the key estimate in \cite{DHS13}:
\begin{proposition}
Define $F_n$ by \eqref{eqn: Fn} and $\Delta_i F_n$ as in \eqref{eqn: Di}. Then there is a constant $C>0$ such that:
\[\sum_{i=1}^\infty Ent_i|\Delta_i F_n|^2  \le C\mathbf{E}\sum_{e\in G_n} (1-\log F(t_e)).\]
\end{proposition}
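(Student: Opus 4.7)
The plan is to reduce the entropy sum to a sum over edges and discrete-Bernoulli coordinates via the representation $t_e = F^{-1}(U_e(\omega_e))$, where $U_e$ is built from independent bits as in \eqref{eqn: Uedef}. By Lemma \ref{lem: bern} I already have
\[
\sum_{i=1}^\infty Ent_i |\Delta_i F_n|^2 \le C\sum_{e,j}\mathbf{E}_\pi(\nabla_{e,j} F_n\circ\varphi)^2,
\]
so the task reduces to bounding the right-hand side by $C\,\mathbf{E}\sum_{e\in G_n}(1-\log F(t_e))$. I would first expand $F_n$ by \eqref{eqn: Fn} and commute the sum over $z\in B_m$ outside the $L^2$ norm using convexity (or Jensen), so that it suffices to estimate, for each fixed $z$ and $e$,
\[
S_{z,e}:=\sum_{j=1}^\infty \mathbf{E}_\pi\bigl[(T_z(U_e)-T_z(U_e-2^{-j}))_+^2\,\mathbf{1}_{\{U_e\ge 2^{-j}\}}\bigr].
\]

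For each fixed configuration of $(t_{e'})_{e'\neq e}$, Lemma \ref{lem: linearb} says that $x\mapsto T_z(F^{-1}(x))$ is nondecreasing in $x\in[0,1]$ and is constant on $[F(D_{z,e}^-),1]$, with $0\le T_z(x)-T_z(0)\le \min\{D_{z,e},\varphi_e(\omega_e)\}$. This is exactly the hypothesis of Rossignol's Lemma \ref{lem: uniformvar} with $a=F(D_{z,e}^-)$ and $\tau=2^{-j}$. I split the sum over $j$ according to whether $2^{-j}\le F(D_{z,e}^-)$ or $2^{-j}>F(D_{z,e}^-)$, using \eqref{eqn: a>tau} in the first case and \eqref{eqn: a<tau} in the second (with \eqref{eqn: a>1/2} handling the case $F(D_{z,e}^-)\ge 1/2$). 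The geometric series sums to a factor $F(D_{z,e}^-)(2-\log_2 F(D_{z,e}^-))$, giving
\[
S_{z,e}\le C\,\mathbf{E}_{\pi_e}\bigl[\varphi_e^2(1+\log_2^+(1-U_e)^{-1})\bigr]\cdot\mathbf{E}\bigl[F(D_{z,e}^-)(1-\log F(D_{z,e}^-))\bigr],
\]
where I have crucially used that $D_{z,e}$ is independent of $\omega_e$. The first factor is a finite constant under the moment assumption \eqref{eqn: logte}, via the variational identity \eqref{eqn: entropyvar} for entropy.

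Next I convert the $D_{z,e}$-factor into an expression supported on geodesic edges. Integration by parts (or the elementary inequality $-F(y^-)\log F(y^-)\le -\int_{[I,y)}\log F(x)\,\mu(\mathrm dx)$) gives
\[
F(D_{z,e}^-)(1-\log F(D_{z,e}^-)) \le \int_I^{D_{z,e}}(1-\log F(t))\,\mu(\mathrm dt).
\]
Since $t_e<D_{z,e}$ forces $e\in G_n(z,z+n\mathbf{e}_1)$ by Lemma \ref{lem: linearb}, summing over $e$ and using the independence of the integrand from $t_e$ yields
\[
\sum_e S_{z,e}\le C\,\mathbf{E}\!\!\sum_{e\in G_n(z,z+n\mathbf{e}_1)}\!\!(1-\log F(t_e)).
\]
Averaging over $z\in B_m$ and using translation invariance (to shift $G_n(z,z+n\mathbf{e}_1)$ back to $G_n$) completes the proof.

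The main technical obstacle is the Rossignol step: the case split according to whether $a=F(D_{z,e}^-)$ exceeds $1/2$ and the careful bookkeeping of the $j$-sum so that a single $\log F(D_{z,e}^-)$ factor emerges. Everything else — the reduction to Bernoulli, the definition-chasing of $F_n$ and $D_{z,e}$, and the final integration by parts — is mechanical once that computation is in place.
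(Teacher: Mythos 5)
Your proposal is correct and follows essentially the same route as the paper: Lemma \ref{lem: bern} to reduce to discrete derivatives, Jensen to pull out the average over $z\in B_m$, Lemma \ref{lem: linearb} to verify the hypotheses of Rossignol's Lemma \ref{lem: uniformvar} with $a=F(D_{z,e}^-)$ and $\tau=2^{-j}$, the same case split in the $j$-sum producing the $F(D_{z,e}^-)(1-\log F(D_{z,e}^-))$ factor, independence of $D_{z,e}$ from $\omega_e$, the integration-by-parts step, and translation invariance at the end. No gaps.
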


\subsection{Estimating the sum \eqref{eqn: logsum}}
To complete the proof of Theorem \ref{thm: DHS13}, we must show 
\[\mathbf{E}\sum_{e\in G_n} (1-\log F(t_e))\le Cn.\]
The sum in the expectation has the form
\[Y_n := \sum_{e\in G_n} w_e,\]
where the weights $w_e$ are given by
\[w_e=1-\log F(t_e).\]

\index{uniform (distribution)}%
Recall that if $F^{-1}$ is the right-continuous inverse of $F$ and $U$ is uniform on $[0,1]$, then $F^{-1}(U)$ is distributed like $t_e$ :
\begin{equation}\label{eqn: exp}
\mathbf{P}(w \ge r) = \mathbf{P}(F(F^{-1}(U))\le e^{1-r})\le \mathbf{P}(U\le e^{1-r})=e^{1-r}.
\end{equation}
Thus, the weights $w_e$ have exponential tails. For a lattice path $\gamma$, we define
\[N(\gamma)= \sum_{e\in \gamma} N(\gamma),\]
and
\begin{equation}
N_n:= \max_{\substack{\gamma: \#\gamma=n\\ 0\in \gamma}} N_n(\gamma).
\end{equation}
Clearly,
\[\{N_n> \beta n\} \subset \bigcup_{\gamma:\#\gamma= n, 0\in \gamma} \{N(\gamma)>\beta n\}.\]
On the other hand, the number of \emph{lattice animals} (connected subsets of $\mathbb{Z}^d$) of size $\le n$ containing the origin is bounded by $e^{Cn}$ for some $n$, so
\[\mathbf{P}(N_n>\beta n) \le e^{Cn} \mathbf{P}(\sum_{i=1}^n w_i >\beta n).\]
Here $w_i$, $1\le i \le n$ are i.i.d. random variables with the common distribution of the $w_e$.
By \eqref{eqn: exp}, it is straightforward to show that 
\[\mathbf{P}(N_n\ge \beta n) \le e^{-\beta n/4}\]
for large $\beta$.  From this tail bound, we find
\begin{equation}
\mathbf{E}N_n^4 = 4\int_0^\infty x^3\mathbf{P}(N_n\ge x)\,\mathrm{d}x\le (\beta_0n)^4+4\int_{\beta_0n}^\infty x^3e^{-x/4}\,\mathrm{d}x \le Cn^4.
\end{equation}

We can now estimate the expectation
\begin{align*}
\mathbf{E}Y_n &= \sum_{j=1}^\infty \mathbf{E}Y_n \mathbf{1}_{2^{j-1}n\le \#G_n < 2^jn}\\
&\le \sum_{j=1}^\infty (\mathbf{E}N_{2^jn}^4)^{1/4}\mathbf{P}(2^{j-1}n\le \#G_n < 2^jn)^{3/4}\\
&\le Cn \sum_{j=1}^\infty 2^j \mathbf{P}(2^{j-1}n\le \# G_n < 2^jn)^{3/4}.
\end{align*}
we have used H\"older's inequality in the second step. Using H\"older (for sums) again on the final sum in $j$, we find
\begin{align*}
&\sum_{j=1}^\infty 2^{-j/2} 2^{3j/2} \mathbf{P}(2^{j-1}n\le \#G_n < 2^jn)^{3/4}\\
&\qquad\le C\left(\sum_{j=1}^\infty 2^{2j} \mathbf{P}(2^{j-1}n\le \#G_n < 2^jn)\right)^{3/4}.
\end{align*}
By \eqref{eqn: Gn-lin}, the final sum is bounded by a constant, and Theorem \ref{thm: DHS13} is proved.

\subsection{Proof of Lemma \ref{lem: uniformvar}}\label{sec: rossignol}
Here we give the derivation of Lemma \ref{lem: uniformvar}. Notice the crucial role of positive association.
\begin{proof}
If $\tau \le 1/2$, then
\begin{align*}
\int_\tau^1 (f(x)-f(x-\tau))^2\,\mathrm{d}x &\le \int_{\tau}^1 (f^2(x)-f^2(x-\tau))\,\mathrm{d}x
\end{align*}
where we have used $f\ge 0$.
Next, 
\begin{align*}
 \int_\tau^1 f^2(x)-f^2(x-\tau)\,\mathrm{d}x &= \int_\tau^1 f^2(x)\, \mathrm{d}x - \int_0^{1-\tau}f^2(x)\,\mathrm{d}x\\
&\le \int_{1-\tau}^1 f^2(x)\,\mathrm{d}x\\
&= \int_0^1 f^2(x) \mathbf{1}_{\{x\ge 1-\tau\}}\,\mathrm{d}x.
\end{align*}
We have used the non-negativity of $f^2(x)$ to drop the integral over $[0,\tau]$. This shows \eqref{eqn: a>1/2}.

If $a\le \tau\le \frac{1}{2}$, since $f$ is constant over $[a,1]$,
\[\int_\tau^1(f(x)-f(x-\tau))^2\,\mathrm{d}x = \int_\tau^{a+\tau}(f(x)-f(x-\tau))^2\,\mathrm{d}x.\]
By monotonicity, the right side is no bigger than
\[\int_{\tau}^{a+\tau} f^2(x)\,\mathrm{d}x = \int_{\tau}^1 f^2(x)\mathbf{1}_{\{x\le a+\tau\}}\,\mathrm{d}x.\]
The Chebyshev association inequality \cite[Theorem~2.14]{BLM} now gives \eqref{eqn: a<tau}:
\begin{align*}
\int_{\tau}^1 f^2(x)\mathbf{1}_{\{x\le a+\tau\}}\,\mathrm{d}x &\le \frac{1}{1-\tau}\int^1_\tau f^2(x)\,\mathrm{d}x \int_\tau^1 \mathbf{1}_{\{x\le a+\tau\}}\,\mathrm{d}x\\
&= \frac{a}{1-\tau}\int_\tau^1 f^2(x)\,\mathrm{d}x\\
&\le 2a\int_0^1 f^2(x)\,\mathrm{d}x.
\end{align*}

When $\tau \le a\le \frac{1}{2}$, we again have
\[\int_\tau^1 (f(x)-f(x-\tau))^2\,\mathrm{d}x=\int_\tau^{a+\tau} (f(x)-f(x-\tau))^2\,\mathrm{d}x.\]
Expanding the square and using monotonicity as in the case $\tau \ge 1/2$, we find
\begin{align*}
\int_\tau^{a+\tau} (f(x)-f(x-\tau))^2\,\mathrm{d}x &\le \int_\tau^{a+\tau}f^2(x)\,\mathrm{d}x-\int_0^af^2(x)\,\mathrm{d}x\\
&= \int_a^{a+\tau} f^2(x)\,\mathrm{d}x - \int_0^\tau f^2(x)\,\mathrm{d}x\\
&\le \int_a^1 f^2(x)\mathbf{1}_{\{x\le a+\tau\}}\,\mathrm{d}x\\
&\le \frac{\tau}{1-a}\int_0^1 f^2(x)\,\mathrm{d}x \\
&\le 2\tau \int_0^1f^2(x)\,\mathrm{d}x.
\end{align*}
In the second-to-last step, we have used Chebyshev's association inequality. 
\end{proof}

\section{Concentration}\index{concentration}%
We remark briefly on how to derive the concentration result \ref{thm: concentration}. The proof is relies on the following (see \cite{BLM}).
\begin{proposition}\label{prop: varp}
Suppose $Z\ge 0$ is a random variable such that there exist constants $0<C\le B$ such that
\[\mathbf{Var} e^{tZ/2} \le Ct^2\mathbf{E}e^{tZ}<\infty \]
for $t\in (0,B^{-1/2})$, then
\[\psi_Z(t):=\mathbf{E}[e^{tZ}] \le -2\log(1-Ct^2), \quad  t\in (0,B^{-1/2}).\] In particular, we have exponential concentration:
\[\mathbf{P}(Z\ge \lambda) \le e^{-t \lambda}\mathbf{E}e^{tZ}\le e^{-t \lambda} \frac{1}{(1-Ct^2)^2}.\]
\end{proposition}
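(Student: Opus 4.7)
The strategy is to turn the variance hypothesis into a multiplicative recursion for the moment generating function and iterate it dyadically. Observe first that both the hypothesis and the conclusion are invariant under replacing $Z$ by the centered variable $W := Z - \mathbf{E}Z$: indeed $\mathbf{Var}(e^{tZ/2}) = e^{t\mathbf{E}Z}\mathbf{Var}(e^{tW/2})$ and $\mathbf{E}e^{tZ} = e^{t\mathbf{E}Z}\mathbf{E}e^{tW}$, so the hypothesis reads $\mathbf{Var}(e^{tW/2}) \le Ct^2\,\mathbf{E}e^{tW}$, and a bound $\mathbf{E}e^{tW} \le (1-Ct^2)^{-2}$ yields the claimed tail inequality for $Z$ via Markov applied to $e^{tW}$. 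Writing $\psi(t) := \mathbf{E}e^{tW}$ and using $\mathbf{Var}(e^{tW/2}) = \psi(t) - \psi(t/2)^2$, the hypothesis rearranges to the recursion
\[
(1-Ct^2)\,\psi(t) \le \psi(t/2)^2, \qquad t \in (0, B^{-1/2}),
\]
which is the engine of the proof.

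Iterating at $t, t/2, \dots, t/2^{n-1}$ gives by induction
\[
\psi(t) \le \frac{\psi(t/2^n)^{2^n}}{\prod_{k=0}^{n-1}(1 - Ct^2/4^k)^{2^k}}.
\]
Since $W$ is centered and has a finite moment generating function on a neighbourhood of $0$, one has $\psi(s) = 1 + O(s^2)$ as $s \to 0$, so $\psi(t/2^n)^{2^n} \to 1$. Passing to the limit $n \to \infty$ yields
\[
\log \psi(t) \le \sum_{k=0}^\infty 2^k\bigl(-\log(1 - Ct^2/4^k)\bigr).
\]

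The final step is to majorise this series by $-2\log(1-Ct^2)$. Using the expansion $-\log(1-u) = \sum_{m \ge 1} u^m/m$ and interchanging the two summations,
\[
\sum_{k=0}^\infty 2^k\bigl(-\log(1-Ct^2/4^k)\bigr) = \sum_{m=1}^\infty \frac{(Ct^2)^m}{m} \cdot \frac{1}{1 - 2 \cdot 4^{-m}},
\]
and since $1/(1 - 2\cdot 4^{-m}) \le 2$ for every integer $m \ge 1$, this is bounded by $2\sum_{m \ge 1}(Ct^2)^m/m = -2\log(1-Ct^2)$, which is exactly the stated bound on $\psi_Z(t)$. The tail inequality follows from Markov's inequality applied to $e^{tW}$.

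The main technical point is justifying the limit $\psi(t/2^n)^{2^n} \to 1$ (and, in parallel, the convergence of the infinite product), which uses the real analyticity of $\psi$ on the interior of its domain of finiteness together with the centering $\mathbf{E}W = 0$. The combinatorial bookkeeping at the end — in particular the estimate $1/(1-2\cdot 4^{-m}) \le 2$, which is saturated precisely at $m = 1$ — is what pins down the constant $2$ in the final bound and is the sole reason the dyadic scaling $t \mapsto t/2$ (as opposed to some other contraction) is the right iteration to run.
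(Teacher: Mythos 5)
Your proof is correct and is essentially the standard dyadic-iteration argument from Boucheron--Lugosi--Massart that the paper cites in lieu of giving a proof: the recursion $(1-Ct^2)\,\psi(t)\le\psi(t/2)^2$ obtained from $\mathbf{Var}(e^{tW/2})=\psi(t)-\psi(t/2)^2$, the telescoping product, and the term-by-term bound $1/(1-2\cdot 4^{-m})\le 2$ are exactly the referenced argument. You were also right to pass to the centered variable $W=Z-\mathbf{E}Z$: as literally stated the proposition fails (e.g.\ for constant $Z$), and the intended conclusion --- the one actually used for Theorem \ref{thm: concentration} --- is the bound on $\log\mathbf{E}e^{t(Z-\mathbf{E}Z)}$, which is what you prove.
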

Proposition \ref{prop: varp} reduces Theorem \ref{thm: concentration} to showing
\begin{equation}\label{eqn: K}
\mathbf{Var} (e^{\lambda F_n /2})\le K\lambda^2 \mathbf{E}e^{\lambda F_n}, \quad |\lambda|<\frac{1}{2\sqrt{K}}.
\end{equation}
with $K=\frac{Cn}{\log n}$. The proof of \eqref{eqn: K} follows a similar strategy to the bound for $\mathbf{Var} F_n $ presented in the previous part of this article, starting from inequality \eqref{eqn: FS} with $f=e^{\lambda F_n}$. The derivation of the key estimate for the discrete derivatives is somewhat more involved. We refer to \cite{DHS14} for details.

Finally, we note the following result, which improves the exponential concentration result of Kesten \cite{kesten93} to Gaussian concentration. It was first derived by M. Talagrand. An alternate proof appears in \cite{DHS14}.
\begin{theorem}
Let $d \ge 2$. Assuming \eqref{eqn: zeroprob} and $\mathbf{E}e^{\alpha t_e} <\infty$ for some $\alpha>0$, there exist $c,C>0$ such that
\[\mathbf{P}(T_n-\mathbf{E}T_n \ge t\sqrt{n})\le e^{-ct^2} \quad t\in(0,C\sqrt{n}).\]
If $\mathbf{E}Y^2<\infty$, where $Y$ is the minimum of $2d$ i.i.d. copies of $t_e$, then also
\[\mathbf{P}(T_n - \mathbf{E}T_n\le -t\sqrt{n})\le e^{-ct^2},\]
for all $t\ge  0$.
\end{theorem}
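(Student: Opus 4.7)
The plan is to apply Proposition \ref{prop: varp} to $Z = T_n - \mathbf{E}T_n$ for the upper tail and (with a modified argument) to $Z = -(T_n - \mathbf{E}T_n)$ for the lower tail. This reduces the theorem to establishing a variance-of-exponential estimate of the form
\[\mathbf{Var}(e^{\lambda T_n/2}) \le K n \lambda^2 \,\mathbf{E}e^{\lambda T_n},\]
valid for $\lambda$ in a suitable range around $0$. Given such a bound with $K$ independent of $n$, Proposition \ref{prop: varp} applied with $C = Kn$ and optimization of the resulting Markov bound yield $\mathbf{P}(Z \ge t\sqrt{n}) \le e^{-ct^2}$ on the stated range of $t$. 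Note that the $Kn$ here (as opposed to $Kn/\log n$ from Theorem \ref{thm: DHS13}) gives exactly Gaussian concentration at scale $\sqrt{n}$; essentially this is the exponential version of Kesten's linear variance bound.

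To establish the variance estimate for the upper tail, I would apply the Efron-Stein inequality (Lemma \ref{lem: ES}) to $f = e^{\lambda T_n/2}$ (which is in $L^2$ for $\lambda < \alpha$ by the exponential moment hypothesis and the trivial upper bound on $T_n$ by the straight-line path). Combining the mean-value bound
\[\bigl(e^{\lambda T_n(t_e)/2} - e^{\lambda T_n(t_e')/2}\bigr)^2 \le \tfrac{\lambda^2}{4}\bigl(T_n(t_e) - T_n(t_e')\bigr)^2 \cdot e^{\lambda (T_n(t_e) \vee T_n(t_e'))}\]
with the linearization of Lemma \ref{linear} and the symmetrization $t_e' > t_e$ from the proof of Theorem \ref{thm: var} (cf.\ \eqref{eqn: varbound}), each Efron-Stein summand reduces to an expression of the form $\mathbf{E}[(t_e')^2 \mathbf{1}_{e \in G_n} e^{\lambda T_n}]$, where the indicator and the exponential factor are measurable with respect to the configuration with $t_e'$ replaced by $t_e$, hence independent of $t_e'$. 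Fubini, the exponential moment hypothesis, and $\mathbf{E}\#G_n \le Cn$ (Lemma \ref{lem: kesten-size}) then close the bound.

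The main obstacle is decoupling the exponential factor $e^{\lambda T_n}$ from the factor $(t_e')^2$ arising from the linearization; the range of admissible $\lambda$ is dictated by the requirement $\mathbf{E}[(t_e')^2 e^{\lambda t_e'}] < \infty$, giving the restriction $t \in (0, C\sqrt{n})$ once we translate back via Proposition \ref{prop: varp}. For the lower tail the asymmetry built into Lemma \ref{linear} permits a weaker assumption: lowering $t_e$ only lowers $T_n$, so when bounding $\mathbf{Var}(e^{-\lambda T_n/2})$ the factor $(t_e')^2$ in the upper-tail analysis is replaced by $(t_e)^2$, and the pointwise bound $T_n \le \sum_i Y_i$ (with $Y_i$ the minimum of $2d$ i.i.d.\ copies of $t_e$, arising from the $2d$ disjoint near-straight paths used in Lemma \ref{lem: max_lemma}) gives $\mathbf{E}e^{-\lambda T_n} \le 1$ and renders the moment $\mathbf{E}Y^2 < \infty$ sufficient. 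The full details appear in \cite{DHS14}.
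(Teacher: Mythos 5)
First, a point of comparison: the paper does not actually prove this theorem --- it is stated as a closing remark, with the proof attributed to Talagrand and an alternate proof deferred to \cite{DHS14}. Your proposal must therefore stand on its own, and it has two substantive gaps. The first is in the reduction itself. Applying Proposition \ref{prop: varp} with $C=Kn$ restricts $\lambda$ to $(0,(Kn)^{-1/2})$, and the resulting bound $\mathbf{E}e^{\lambda Z}\le(1-Kn\lambda^2)^{-2}$ degenerates at the endpoint of that range. Optimizing $e^{-\lambda t\sqrt n}\,\mathbf{E}e^{\lambda Z}$ over $\lambda\lesssim (Kn)^{-1/2}$ gives $e^{-ct^2}$ only for $t$ up to a \emph{constant}, and merely $e^{-ct}$ beyond --- i.e., Kesten's exponential concentration, not the Gaussian bound up to $t\sim\sqrt n$ asserted here. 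To get $e^{-ct^2}$ on the full range $t\in(0,C\sqrt n)$ one needs $\log\mathbf{E}e^{\lambda(T_n-\mathbf{E}T_n)}\le Cn\lambda^2$ for all $\lambda$ in a \emph{fixed} interval $(0,\lambda_0)$ independent of $n$; establishing and integrating the differential inequality on such a fixed interval is precisely the content of Talagrand's improvement, and Proposition \ref{prop: varp} as stated does not deliver it.

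The second gap is the decoupling at the end of the Efron--Stein computation. After the mean-value and linearization steps you are left needing $\sum_e\mathbf{E}[\mathbf{1}_{e\in G_n}e^{\lambda T_n}]=\mathbf{E}[\#G_n\,e^{\lambda T_n}]\le Cn\,\mathbf{E}e^{\lambda T_n}$. This does not follow from $\mathbf{E}\#G_n\le Cn$, because $\#G_n$ and $e^{\lambda T_n}$ are correlated; Cauchy--Schwarz produces $(\mathbf{E}e^{2\lambda T_n})^{1/2}$, whose comparison to $\mathbf{E}e^{\lambda T_n}$ is essentially the concentration you are trying to prove. One must argue separately, e.g.\ via Lemma \ref{lem: perc-lem} that $\#G_n\le a^{-1}T_n$ off an event of probability $e^{-cn}$, combined with an upper large-deviation estimate for $T_n$ (from the straight path and the exponential moment) and the trivial lower bound $\mathbf{E}e^{\lambda T_n}\ge e^{\lambda\mathbf{E}T_n}$; ``Fubini \dots then close the bound'' skips the central technical point. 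Finally, for the lower tail your sketch replaces $(t_e')^2$ by $t_e^2$ and so still requires $\mathbf{E}t_e^2<\infty$; to get by with only $\mathbf{E}Y^2<\infty$ one must bound the increment by $\min(t_e,D_{z,e})$ as in Lemma \ref{lem: linearb} and control $D_{z,e}$ by the passage time of a local detour built from boundedly many $Y$-type variables. (The observation that $\mathbf{E}e^{-\lambda T_n}\le 1$ is true but does no work toward this.)
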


\bibliography{Sosoe}
\bibliographystyle{amsplain}

\end{document}